\newcommand{\R}{\mathds R}
\newcommand{\C}{\mathds C}
\newcommand{\Z}{\mathds Z}
\newcommand{\N}{\mathds N}
\newcommand{\Ddt}{\tfrac{\mathrm D}{\mathrm dt}}
\newcommand{\DDdt}{\tfrac{\mathrm D^2}{\mathrm dt^2}}
\newcommand{\spfl}{\mathfrak{sf}}
\newcommand{\iMaslov}{\mathrm i_{\mathrm{Maslov}}}
\newcommand{\Dim}{\mathrm{dim}}
\newcommand{\Ker}{\mathrm{Ker}}
\renewcommand{\contentsline}[3]{\csname new#1\endcsname{#2}{#3}}
\newcommand{\newchapter}[2]{\bigskip\hbox to \hsize{\vbox{\advance\hsize by -.5cm\baselineskip=12pt\parfillskip=0pt\leftskip=2cm\noindent\hskip -2cm #1\leaders\hbox{.}\hfil\hfil\par}$\,$#2\hfil}}
\newcommand{\newsection}[2]{\medskip\hbox to \hsize{\vbox{\advance\hsize by -.5cm\baselineskip=12pt\parfillskip=0pt\leftskip=2.5cm\noindent\hskip -2cm #1\leaders\hbox{.}\hfil\hfil\par}$\,$#2\hfil}}
\newcommand{\newsubsection}[2]{\medskip\hbox to \hsize{\vbox{\advance\hsize by -.5cm\baselineskip=12pt\parfillskip=0pt\leftskip=3.5cm\noindent\hskip -2cm #1\leaders\hbox{.}\hfil\hfil\par}$\,$#2\hfil}}
\numberwithin{equation}{section}
\title[Spectral flow and iteration of closed  geodesics]{Spectral flow and iteration\\ of closed semi-Riemannian geodesics}
\author[M. A. Javaloyes]{Miguel Angel Javaloyes}
\address{Departamento de Matem\'atica,\hfill\break\indent
Universidade de S\~ao Paulo, \hfill\break\indent Rua do Mat\~ao
1010,\hfill\break\indent CEP 05508-900, S\~ao Paulo, SP, Brazil}
\email{majava@ime.usp.br}
\author[P.\ Piccione]{Paolo Piccione}
\address{Departamento de Matem\'atica,\hfill\break\indent
Universidade de S\~ao Paulo, \hfill\break\indent Rua do Mat\~ao
1010,\hfill\break\indent CEP 05508-900, S\~ao Paulo, SP, Brazil}
\email{piccione@ime.usp.br}
\urladdr{http://www.ime.usp.br/\~{}piccione}
\thanks{M. A. J. is sponsored by Fapesp;  P. P. is partially sponsored by CNPq.}
\date{October 16th, 2007}
\begin{document}


\theoremstyle{plain}\newtheorem*{teon}{Theorem}
\theoremstyle{definition}\newtheorem*{defin*}{Definition}
\theoremstyle{plain}\newtheorem{teo}{Theorem}[section]
\theoremstyle{plain}\newtheorem{prop}[teo]{Proposition}
\theoremstyle{plain}\newtheorem{lem}[teo]{Lemma}
\theoremstyle{plain}\newtheorem{cor}[teo]{Corollary}
\theoremstyle{definition}\newtheorem{defin}[teo]{Definition}
\theoremstyle{remark}\newtheorem{rem}[teo]{Remark}
\theoremstyle{plain} \newtheorem{assum}[teo]{Assumption}
\swapnumbers
\theoremstyle{definition}\newtheorem{example}{Example}[section]
\theoremstyle{plain} \newtheorem*{acknowledgement}{Acknowledgements}
\theoremstyle{definition}\newtheorem*{notation}{Notation}


\begin{abstract}
We introduce the notion of spectral flow along a periodic semi-Riemannian geodesic, as a suitable substitute of
the Morse index in the Riemannian case. We study the growth of the spectral flow along a closed geodesic under
iteration, determining its asymptotic behavior.
\end{abstract}

\maketitle

\begin{section}{Introduction}
Closed geodesics are critical points of the geodesic action functional in the
free loop space of a semi-Riemannian manifold $(M,g)$; a very classical problem
in Geometry is to establish multiplicity of closed geodesics (see \cite{Kli}).
By ``multiplicity of closed geodesics'', it is always meant multiplicity of ``prime closed geodesics'', i.e.,
those geodesics that are not obtained by iteration of another closed geodesic.
One of the difficult aspects in the variational theory of geodesics is precisely
the question of distinguishing iterates.
In a celebrated paper by R. Bott \cite{Bo4} it is studied the Morse index of a closed
Riemannian geodesic; the main result is a formula establishing the growth of the
index under iteration. This formula shows that, given a closed geodesic $\gamma$,
its iterates $\gamma^{(N)}$ either have Morse index that grows linearly with $N$,
or they have all null index.  This has been used by Gromoll and Meyer in another celebrated
paper \cite{GroMey2}, where the authors develop an equivariant Morse theory to
prove the existence of infinitely many prime closed geodesics in compact Riemannian
manifolds whose free loop space has unbounded Betti numbers. Roughly speaking,
the (uniform) linear growth of the Morse index of an iterate implies that
if there were only a finite number of prime closed geodesics, then the homology
generated by their iterates would not suffice to produce the homology of the entire
free loop space. Refinements of this kind of results have appeared in subsequent literature
(see \cite{BanKli, Kli}).

More recently, an increasing interest has arised around the question of existence of
periodic solutions of more general variational problems, and especially in the
context of semi-Riemannian geometry. Recall that a semi-Riemannian manifold is
a manifold $M$ endowed with a nondegenerate, but possibly non positive definite,
metric tensor $g$. In this context, the geodesic variational theory is extremely
more involved, even in the fixed endpoint case (see \cite{ABFM, AbbMej}), due
to the strongly indefinite character of the action functional.
When the metric tensor is Lorentzian, i.e., it has index equal to $1$, and the
metric is stationary, i.e., time invariant, then it is possible to perform a certain
reduction of the geodesic variational problem that yields existence results
similar to the positive definite case (see \cite{BilMerPic, CanFloSan, asian, CAG1, Masiello}).
For instance, it is proven in \cite{BilMerPic} that any
stationary Lorentzian manifold having a compact Cauchy surface and whose free loop space
has unbounded Betti numbers has infinitely many distinct prime closed geodesics.
A Bott type result on the Morse index of an iterate has been proven in \cite{JavLopPic}
for stationary Lorentzian metrics.

Dropping the stationarity assumption is at this stage a quite challenging task.
The first problem that one encounters is the fact that the critical points
of the geodesic action functional have \emph{truly} infinite Morse index,
so that standard Morse theory fails. In order to develop Morse theory for strongly
indefinite functionals (see \cite{AbbMej2}), one computes the dimension of the
intersection between the stable and the unstable manifolds as the difference of
a sort of generalized index function defined at each critical point.
In the fixed endpoint geodesic case, such generalized index function can be described explicitly
as a kind of algebraic count of the degeneracies of the index form along
the geodesic. More precisely, this is the so-called \emph{spectral flow} of the path
of index forms along the geodesic. Several extensions of the Morse index theorem (see \cite{asian, topology})
show that this number is related to a symplectic invariant associated to a fixed endpoint
geodesic, called the Maslov index. The Maslov index  is the natural substitute for
the number of conjugate points along a geodesic, which may be infinite when the metric is
non positive definite.

As to the periodic case, the notion of spectral flow $\spfl(\gamma)$ of a closed geodesic $\gamma$
has been introduced only recently (see \cite{BenPic}); this is a generalization of the Morse index of the geodesic
action functional in the Riemannian case. For the reader's convenience,
in Section~\ref{sec:spectralflow} we will review briefly this definition, that is
given in terms of the choice of a periodic frame along the geodesic. An explicit computation
shows that the periodic spectral flow equals the fixed endpoint spectral flow plus a \emph{concavity index},
as in the original paper by M. Morse \cite{Morseconc}, plus a certain degeneracy term (see Theorem~\ref{Morseperiodic}).

The main purpose of the present paper is to establish the growth of the spectral flow under iteration
of the closed geodesic, along the lines of \cite{Bo4}. Given a closed semi-Riemannian geodesic
$\gamma$, we will show the existence of a function $\lambda_\gamma$ defined on the unit circle
$\mathds S^1$ and taking values in $\Z$ (Definition~\ref{thm:defspflofunction}), with the
property that the spectral flow of the
$N$-th iterate $\gamma^{(N)}$ of $\gamma$ equals the sum of the values that $\lambda_\gamma$ takes at the
$N$-th roots of unity (Theorem~\ref{thm:fourier}). This function is continuous, i.e., locally constant, at the points of $\mathds S^1\setminus\{1\}$
that are not eigenvalues of the linearized Poincar\'e map $\mathfrak P_\gamma$ of $\gamma$ (Proposition~\ref{thm:lambdagammaconst});
the jump of $\lambda_\gamma$ at an eigenvalue of $\mathfrak P_\gamma$ is bounded by the
dimension of the corresponding eigenspace (Corollary~\ref{thm:stimasalti}).
As in the Riemannian case, knowing the exact value of the jumps of $\lambda_\gamma$ at each discontinuity
point would determine entirely the function $\lambda_\gamma$. It should be observed that these discontinuities
correspond to isolated degeneracy instants of an \emph{analytic} path of self-adjoint Fredholm operators, and the
value of the jump equals the contribution of the degeneracy instant to the spectral flow of the path.
In principle, these jumps can be computed using higher order methods (see \cite{GiaPicPorCOMPTES}), involving
a finite number of derivatives of the path.
As to the point $z=1$, there is always a discontinuity
of $\lambda_\gamma$ when $g$ is not positive definite (see Corollary~\ref{thm:saltoinz=1});
the value of the jump at $z=1$ equals the index of the metric tensor $g$. Concerning the \emph{nullity}
of the iterates $\gamma^{(N)}$, the semi-Riemannian case is totally analogous to the Riemannian case, where the question is reduced
to studying the spectrum of the linearized Poincar\'e map.

Using these properties of the spectral flow function $\lambda_\gamma$, we then study the asymptotic behavior
of the sequence $N\mapsto\spfl(\gamma^{(N)})\in\Z$, by first showing that the limit $\lim\limits_{N\to\infty}\frac1N\spfl(\gamma^{(N)})$
exists and is finite (Proposition~\ref{thm:esistenzalimitelambda}). More precisely, using a certain finite dimensional reduction,
we show that $\spfl(\gamma^{(N)})$ is the sum of a linear term in $N$, a uniformly bounded term, and the term of a sequence
which is either bounded or it satisfies a sort of uniform linear growth in $N$ (Proposition~\ref{thm:spflredfindim}, Lemma~\ref{thm:unifboundeterm} and
Proposition~\ref{thm:cresictaBN}).   When $\gamma$ is a hyperbolic geodesic, i.e., when $\mathfrak P_\gamma$ has no eigenvalue on
the unit circle, then $\vert\spfl(\gamma^{(N)})\vert$ either grows linearly with $N$, or it is constant equal to the
index of the metric tensor $g$ (Proposition~\ref{thm:gammahyper}). In view to the development of a full-fledged
Morse theory for semi-Riemannian closed geodesics, the most important result is that the spectral flow of an iterate $\gamma^{(N)}$
is either bounded, or it has a uniform linear growth (Proposition~\ref{thm:lineargrowth}, Corollary~\ref{thm:boundedorlinear}).
This implies, in particular, that if a semi-Riemannian manifold has only a finite number of distinct prime
closed geodesics, then for $k\in\Z$ with $\vert k\vert$ sufficiently large, the total number of \emph{geometrically distinct}
closed geodesics whose spectral flow is equal to $k$ has to be uniformly bounded (Proposition~\ref{thm:geomdistfinnum}). 
This is the key point of Gromoll and Meyer celebrated Riemannian multiplicity result.

The results are obtained mostly by functional analytical techniques. Using periodic frames along the geodesic
(Section~\ref{sec:spectralflow}), the problem is cast into the language of differential systems in $\R^n$
and studied in the appropriate Sobolev space setting.
Following Bott's ideas, the spectral flow function $\lambda_\gamma$ is then obtained by considering a suitable
complexification of the index form and of the space of infinitesimal variations of the geodesic (Subsections~\ref{sub:basicdata}
and \ref{sub:spflfunction}). The central property of $\lambda_\gamma$, that gives $\spfl(\gamma^{(N)})$ as a sum
of the values of $\lambda_\gamma$ at the $N$-th roots of unity, is proved in Section~\ref{sec:fourier}; using Bott's suggestive terminology,
this is called the \emph{Fourier theorem}. Its proof in the non positive definite case relies heavily on a very special
property of the index form, which is that of being represented by a compact perturbation of a fixed \emph{symmetry} of the Hilbert space of
variations of $\gamma$. By a symmetry of a Hilbert space it is meant a self-adjoint operator $\mathfrak I$ whose
square $\mathfrak I^2$ is the identity. For paths of the form symmetry plus compact, the spectral flow only depends on the
endpoints of the path, which is used in the proof of the Fourier theorem.
The question of continuity of $\lambda_\gamma$, which is quite straightforward in the positive definite case,
is more involved in the general semi-Riemannian case. At points $z\in\mathds S^1\setminus\{1\}$, it is obtained
by showing a perturbation result for the spectral flow of paths of self-adjoint Fredholm operators restricted
to continuous families of closed subspaces of a fixed Hilbert space (Corollary~\ref{thm:flussospettrnondipende}).
The definition and a few basic properties of spectral flow on varying domains are discussed preliminarly in Section~\ref{sec:spectrflowvarying}.
As to the point $z=1$, where Corollary~\ref{thm:flussospettrnondipende} does \emph{not} apply,
we use a certain finite dimensional reduction formula for the spectral flow (Proposition~\ref{thm:BenPic}),
which was proved recently in \cite{BenPic} to show that the spectral flow function has in $z=1$ a sort of \emph{artificial}
discontinuity when $\gamma$ is nondegenerate. The reduction formula is used also in last section, where we obtain the
iteration formula for the spectral flow (Proposition~\ref{thm:spflredfindim}) and we prove estimates on
its growth. For simplicity, in this paper we will only consider orientation preserving closed geodesics; however,
in Subsection \ref{nonorientation} we discuss briefly how to deal with  the non-orientation preserving case.

Future developments of the theory of periodic semi-Riemannian geodesics should include an equivariant
version of the strongly indefinite Morse theory, along the lines of \cite{Wang}. A preliminary important
step would deal with the case of nondegenerate critical orbits; in the context of periodic geodesics,
this would apply to the so-called \emph{bumpy metrics}. Recall that a metric is bumpy if all its closed
geodesics are nondegenerate. Bumpy metrics are generic in the Riemannian setting (see \cite{Abr, Ano, KliTak, Whi});
nothing is known with this respect in the nonpositive definite case.
\end{section}
\begin{section}{Spectral flow on varying domains}
\label{sec:spectrflowvarying}
Let $H$ be a real or complex Hilbert space; we will denote by $\mathrm B(H)$ the Banach algebra of all bounded
operators on $H$, by $\mathrm{GL}(H)$ the open subset of $\mathrm B(H)$ consisting of all isomorphisms,
by $\mathrm O(H)$ the subgroup of $\mathrm{GL}(H)$ consisting of all isometries,
and by $\mathcal F_{\mathrm{sa}}(H)$ the set of all self-adjoint
Fredholm operators on $H$. The adjoint of an operator $T$ on $H$ will be denoted by $T^*$.
Let us recall that the spectral flow is a integer invariant associated to a continuous
path $T:[a,b]\to\mathcal F_{\mathrm{sa}}(H)$, which is:
\begin{itemize}
\item[(i)] fixed endpoint homotopy invariant;
\item[(ii)] additive by concatenation;
\item[(iii)] invariant by \emph{cogredience}, i.e., given two Hilbert spaces $H_1$, $H_2$, a continuous curve $T:[a,b]\to\mathcal F_{\mathrm{sa}}(H_1)$ and a continuous curve
 $M:[a,b]\to\mathrm{Iso}(H_1,H_2)$ of isomorphisms from $H_1$ to $H_2$, then the spectral flow of $T$ on $H_1$
coincides with the spectral flow of $[a,b]\ni t\mapsto M_tT_tM_t^*$ on $H_2$.
\end{itemize}
 We will denote by $\spfl(T,[a,b])$ the spectral flow of the curve $T$; recall that
$\spfl(T,[a,b])$ is a sort of algebraic count of the degeneracy instants of the path $T_t$ as $t$ runs from $a$ to $b$.
Details on the definition and the basic properties of the spectral flow can be found, for instance, in refs.~\cite{BenPic, GiaPicPorCOMPTES, Phillips}.
There are several conventions on how to compute the contribution of the endpoints of the path, in case of
degenerate endpoints; although making a specific choice is irrelevant in the context of the present paper,  
we will follow the convention in \cite{Phillips}.

Property (i) above  holds in fact in a slightly more general form, as follows.
If $h:[a,b]\times[c,d]\to\mathcal F_{\mathrm{sa}}(H)$ is a continuous map such that  $\mathrm{dim}\big(\mathrm{Ker}(h_{a,s})\big)$
and $\mathrm{dim}\big(\mathrm{Ker}(h_{b,s})\big)$ are constant for all $s\in[c,d]$, then the spectral flow of the curve
$[a,b]\ni t\mapsto h_{t,c}$ equals the spectral flow of the curve $[a,b]\ni t\mapsto h_{t,d}$ (see Corollary \ref{thm:flussospettrnondipende}).

We will need to consider paths of Fredholm operators defined on varying domains. Let us consider the following setup:
let $[a,b]\ni t\mapsto H_t$ be a continuous path of closed subspaces of $H$. Recall that this means that, denoting by $P_t:H\to H$ the orthogonal
projection onto $H_t$, then the curve $t\mapsto P_t$ is continuous relatively to the operator norm topology of $\mathrm B(H)$.
For instance, the kernels of a continuous family $t\mapsto F_t$ of surjective bounded linear maps from $H$ to some other Hilbert space
$H'$ form a continuous family of closed subspaces of $H$ (\cite[Lemma~2.9]{asian}).
A simple lifting argument in fiber bundles shows that there exists a continuous curve $t\mapsto\Phi_t\in\mathrm{GL}(H)$ and\footnote{%
In fact, one can find the curve $\Phi_t$ taking values in $\mathrm O(H)$, see \cite{BenPic}.}
a closed subspace $H_\star$ of $H$ such that $\Phi_t(H_\star)=H_t$ for all $t$. We will call the pair $(\Phi,H_\star)$ a
\emph{trivialization} of the path $t\mapsto H_t$.
Assume now that $[a,b]\ni t\mapsto T_t\in\mathrm B(H)$ is a continuous
curve with the property that $P_tT_t\vert_{H_t}:H_t\to H_t$ belongs to $\mathcal F_{\mathrm{sa}}(H_t)$ for all $t$.
Then, given a trivialization $(\Phi,H_\star)$ for $(H_t)_{t\in[a,b]}$, for all $t\in[a,b]$ the
operator $P_\star\Phi_t^*P_tT_t\Phi_t\vert_{H_\star}:H_\star\to H_\star$ belongs
to $\mathcal F_{\mathrm{sa}}(H_\star)$, where $P_\star$ is the orthogonal projection onto $H_\star$.
We can therefore give the following definition:
\begin{defin}
The spectral flow of the path $T$ over the varying domains $(H_t)_{t\in[a,b]}$, denoted by $\spfl\big(T;(H_t)_{t\in[a,b]}\big)$,
is defined as the spectral flow
of the continuous path $[a,b]\ni t\mapsto P_\star\Phi_t^*P_tT_t\Phi_t\vert_{H_\star}$ of self-adjoint Fredholm operators on $H_\star$.
\end{defin}
Invariance by cogredience shows easily that the above definition does not depend on the choice of
the trivialization $(\Phi,H_\star)$ of $(H_t)_{t\in[a,b]}$. Namely, assume that $(\widetilde\Phi,\widetilde H_\star)$
is another trivialization of $(H_t)_{t\in[a,b]}$. Denoting by $P_\star$ (resp.,  $\widetilde P_\star$)
the orthogonal projection onto $H_\star$ (resp., onto $\widetilde H_\star$), and setting
$B_t=P_\star\Phi_t^*P_tT_t\Phi_t\vert_{H_\star}$, $\widetilde B_t=\widetilde P_\star\widetilde\Phi_t^* P_tT_t\widetilde\Phi_t\vert_{\widetilde H_\star}$,
and  $\Psi_t=\Phi_t^{*}\widetilde\Phi_t$,
one has:
\[\widetilde B_t=\big(\Psi_t\vert_{\widetilde H_\star}\big)^*B_t\big(\Psi_t\vert_{\widetilde H_\star}\big)\]
for all $t$, hence $\spfl(B,[a,b])=\spfl(\widetilde B,[a,b])$.
\smallskip

Let us study how the spectral flow varies with respect to the domain.
\begin{lem}\label{thm:lemacont}
Let $[a,b]\ni t\mapsto T_t\in\mathrm B(H)$ be a continuous map and $[c,d]\ni s\mapsto H_s$ be a continuous path of closed
subspaces of $H$, with the property that $P_sT_t\vert_{H_s}\in\mathcal F_{\mathrm{sa}}(H_s)$ for all $s$ and $t$. For all
$s\in[c,d]$, denote by $\mathfrak h_s$ the spectral flow of the path $[a,b]\ni t\mapsto P_sT_t\vert_{H_s}$ of Fredholm operators
on $H_s$. Similarly, for all $t\in[a,b]$ denote by $\mathfrak v_t$ the spectral flow of the (constant) path of
Fredholm operators $T_t$ on the varying domains $(H_s)_{s\in[c,d]}$. Then:
\begin{equation}\label{eq:sommazero}
\mathfrak h_c-\mathfrak h_d=\mathfrak v_a-\mathfrak v_b.
\end{equation}
\end{lem}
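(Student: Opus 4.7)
\smallskip

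The plan is to pull everything back to a single ambient Hilbert space via a trivialization and then apply fixed-endpoint homotopy invariance of the spectral flow to two different boundary paths of the rectangle $[a,b]\times[c,d]$.

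First I would choose a trivialization $(\Phi,H_\star)$ of the family $(H_s)_{s\in[c,d]}$, i.e.\ a continuous curve $[c,d]\ni s\mapsto\Phi_s\in\mathrm{GL}(H)$ with $\Phi_s(H_\star)=H_s$, and define the two-parameter family
\[
A_{t,s}:=P_\star\Phi_s^*P_sT_t\Phi_s\big\vert_{H_\star}\in\mathcal F_{\mathrm{sa}}(H_\star),\qquad (t,s)\in[a,b]\times[c,d],
\]
where $P_\star$ is the orthogonal projection onto $H_\star$. Continuity of $s\mapsto P_s$, $s\mapsto\Phi_s$ and $t\mapsto T_t$ makes $(t,s)\mapsto A_{t,s}$ continuous from the rectangle to $\mathcal F_{\mathrm{sa}}(H_\star)$.

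Next I would identify the spectral flows along the four edges of this rectangle. For fixed $s\in[c,d]$, the path $t\mapsto A_{t,s}$ on $H_\star$ is cogredient, via the isomorphism $\Phi_s\vert_{H_\star}:H_\star\to H_s$, to the path $t\mapsto P_sT_t\vert_{H_s}$ on $H_s$; invariance by cogredience (property (iii)) therefore gives
\[
\spfl\bigl(A_{\cdot,s},[a,b]\bigr)=\mathfrak h_s,\qquad s\in\{c,d\}.
\]
For fixed $t\in[a,b]$, the path $s\mapsto A_{t,s}$ is, by definition, the spectral flow of the constant path $T_t$ over the varying domains $(H_s)_{s\in[c,d]}$, so
\[
\spfl\bigl(A_{t,\cdot},[c,d]\bigr)=\mathfrak v_t,\qquad t\in\{a,b\}.
\]

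Finally I would compare the two boundary paths of the rectangle that go from the corner $(a,c)$ to the corner $(b,d)$, namely the concatenation $\alpha$ running along the bottom and then the right edge, and the concatenation $\beta$ running along the left and then the top edge. By additivity by concatenation (property (ii)),
\[
\spfl(\alpha)=\mathfrak h_c+\mathfrak v_b,\qquad \spfl(\beta)=\mathfrak v_a+\mathfrak h_d.
\]
Since $[a,b]\times[c,d]$ is contractible, $\alpha$ and $\beta$ are homotopic with fixed endpoints $A_{a,c}$ and $A_{b,d}$ inside the rectangle, and composing with the continuous map $(t,s)\mapsto A_{t,s}$ yields a fixed-endpoint homotopy between $\alpha$ and $\beta$ in $\mathcal F_{\mathrm{sa}}(H_\star)$. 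Fixed-endpoint homotopy invariance (property (i)) then gives $\spfl(\alpha)=\spfl(\beta)$, which is exactly \eqref{eq:sommazero}.

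The only real subtlety is the bookkeeping at the corners: both $\alpha$ and $\beta$ have the \emph{same} starting operator $A_{a,c}$ and the \emph{same} terminal operator $A_{b,d}$, so the basic form of property (i) applies directly and I do not need to invoke the more general version of Corollary~\ref{thm:flussospettrnondipende} (which is anyway proved only later, using this very lemma).
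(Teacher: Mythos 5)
Your proof is correct and follows essentially the same route as the paper: choose a trivialization, build the two-parameter family $A_{t,s}=P_\star\Phi_s^*P_sT_t\Phi_s\vert_{H_\star}$ on the rectangle, and combine concatenation additivity with fixed-endpoint homotopy invariance applied to the two boundary paths from $(a,c)$ to $(b,d)$. The only difference is that you spell out the cogredience step identifying $\spfl(A_{\cdot,s},[a,b])$ with $\mathfrak h_s$, which the paper leaves implicit.
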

\begin{proof}
Choose a trivialization $(\Phi,H_\star)$ for $(H_s)_{s\in[c,d]}$, and define a continuous map $B:[a,b]\times[c,d]\to\mathcal F_{\mathrm{sa}}(H_\star)$ by:
\[B_{s,t}=P_\star\Phi_s^*P_sT_t\Phi_s\vert_{H_\star}.\]
By definition, $\mathfrak v_t=\spfl(s\mapsto B_{s,t},[c,d])$ and $\mathfrak h_s=\spfl(t\mapsto B_{s,t},[a,b])$;
formula \eqref{eq:sommazero} follows immediately from the homotopy invariance and the concatenation additivity of the spectral flow.
\end{proof}
\begin{cor}\label{thm:flussospettrnondipende}
Under the assumptions of Lemma~\ref{thm:lemacont}, if $\Ker\big(P_sT_a\vert_{H_s}\big)$ and $\Ker\big(P_sT_b\vert_{H_s}\big)$
have constant dimension for all $s\in[a,b]$, then the spectral flow of $t\mapsto P_sT_t\vert_{H_s}$ on $H_s$ does not depend on $s$.
\end{cor}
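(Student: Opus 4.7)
The plan is to apply Lemma~\ref{thm:lemacont} to the sub-rectangles $[a,b]\times[c,s]$ for each $s\in[c,d]$, and to show that in every such application both quantities $\mathfrak v_a$ and $\mathfrak v_b$ vanish. Then identity \eqref{eq:sommazero} immediately yields $\mathfrak h_c=\mathfrak h_s$ for all $s$, which is exactly the assertion that $s\mapsto\spfl(t\mapsto P_sT_t\vert_{H_s},[a,b])$ is constant.

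The key background fact I will invoke is the standard property of spectral flow that a continuous path $[c,d]\ni s\mapsto L_s\in\mathcal F_{\mathrm{sa}}(H_\star)$ whose kernel has constant finite dimension has spectral flow equal to zero. This follows from the homotopy invariance together with Phillips' endpoint convention adopted in the paper: on a neighborhood of each $s_0$ one can split $H_\star$ into the constant-rank kernel and its complement, on which $L_s$ stays invertible, so there is no net zero-crossing; compactness of $[c,d]$ finishes the argument.

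Accepting this, I would first fix a trivialization $(\Phi,H_\star)$ of the family $(H_s)_{s\in[c,d]}$ and, following the construction in Lemma~\ref{thm:lemacont}, set
\[
B^a_s=P_\star\Phi_s^*P_sT_a\Phi_s\vert_{H_\star}\in\mathcal F_{\mathrm{sa}}(H_\star),\qquad B^b_s=P_\star\Phi_s^*P_sT_b\Phi_s\vert_{H_\star}\in\mathcal F_{\mathrm{sa}}(H_\star).
\]
By invariance under cogredience, $\mathrm{dim}\,\mathrm{Ker}(B^a_s)=\mathrm{dim}\,\mathrm{Ker}(P_sT_a\vert_{H_s})$ and similarly at $b$, and by hypothesis both are constant in $s$. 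Hence, by the background fact, $\mathfrak v_a=\spfl(B^a,[c,d])=0$ and $\mathfrak v_b=\spfl(B^b,[c,d])=0$. Applying \eqref{eq:sommazero} yields $\mathfrak h_c=\mathfrak h_d$. Since the hypothesis persists on any subinterval $[c,s]\subset[c,d]$, the same reasoning carried out on $[a,b]\times[c,s]$ gives $\mathfrak h_c=\mathfrak h_s$ for arbitrary $s\in[c,d]$, proving the corollary.

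The only delicate step is the background fact on constant-dimensional kernels; the rest is a direct bookkeeping application of Lemma~\ref{thm:lemacont} and the definition of spectral flow on varying domains. I do not expect any genuine obstacle, since the identity \eqref{eq:sommazero} already carries the full geometric content and the hypothesis is precisely what is needed to kill its right-hand side.
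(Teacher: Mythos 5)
Your proposal is correct and follows essentially the same route as the paper: the key step in both is that a path of self-adjoint Fredholm operators with kernel of constant dimension has zero spectral flow, which makes $\mathfrak v_a$ and $\mathfrak v_b$ vanish in \eqref{eq:sommazero}. Your explicit passage to sub-rectangles $[a,b]\times[c,s]$ is a minor elaboration of what the paper leaves implicit when it asserts independence for all $s$ rather than merely $\mathfrak h_c=\mathfrak h_d$.
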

\begin{proof}
This follows easily from the fact that curves of self-adjoint Fredholm operators with kernel of constant dimension
have null spectral flow. Thus, under our assumptions both terms $\mathfrak v_a$ and $\mathfrak v_b$ in \eqref{eq:sommazero} vanish.
\end{proof}
\end{section}
\begin{section}{Spectral flow along a closed geodesic}
\label{sec:spectralflow}
We will recall from \cite{BenPic} the definition of spectral flow along a closed geodesic.

\subsection{Periodic geodesics}
We will consider throughout an $n$-dimensional semi-Riemannian manifold
$(M,g)$, denoting by $\nabla$ the covariant derivative of its Levi--Civita
connection, and by $R$ its curvature tensor, chosen with the sign convention
$R(X,Y)=[\nabla_X,\nabla_Y]-\nabla_{[X,Y]}$.
Let $\gamma:[0,1]\to M$ be a periodic geodesic in $M$, i.e., $\gamma(0)=\gamma(1)$ and
$\dot\gamma(0)=\dot\gamma(1)$. We will assume that $\gamma$ is \emph{orientation
preserving}, which means that the parallel transport along $\gamma$ is orientation preserving;
the non orientation preserving case can be studied similarly, as explained in Subsection~\ref{nonorientation}.
If $M$ is orientable, then every closed geodesic is orientation preserving. Moreover,
given any closed geodesic $\gamma$, its two-fold iteration $\gamma^{(2)}$, defined
by $\gamma^{(2)}(t)=\gamma(2t)$, is always orientation preserving.
We will denote by $\Ddt$ the covariant differentiation of vector fields along $\gamma$;
recall that the \emph{index form} $I_\gamma$ is the bounded symmetric
bilinear form defined on the Hilbert space of all periodic vector fields of Sobolev class $H^1$ along
$\gamma$, given by:
\begin{equation}\label{eq:defindexform}
I_\gamma(V,W)=\int_0^1g\big(\Ddt V,\Ddt W)+g(RV,W)\,\mathrm dt,
\end{equation}
where we set $R=R(\dot\gamma,\cdot)\dot\gamma$.
Closed geodesics in $M$ are the critical points of the geodesic action functional
$f(\gamma)=\frac12\int_0^1g(\dot\gamma,\dot\gamma)\,\mathrm dt$ defined in
the \emph{free loop space} $\Omega M$ of $M$; $\Omega M$ is the Hilbert manifold of
all closed curves in $M$ of Sobolev class $H^1$. The index form $I_\gamma$ is the second
variation of $f$ at the critical point $\gamma$; unless $g$ is positive definite,
the Morse index of $f$ at each non constant critical point is infinite.
The notion of Morse index is replaced by the notion of spectral flow.

Let us denote by $\mathfrak P_\gamma:T_{\gamma(0)}M\oplus T_{\gamma(0)}M\to T_{\gamma(0)}M\oplus T_{\gamma(0)}M$ the
\emph{linearized Poincar\'e map} of $\gamma$, defined by:
\[\mathfrak P_\gamma(v,v')=\big(V(1),\Ddt V(1)\big),\]
where $V$ is the unique Jacobi field along $\gamma$ such that $V(0)=v$ and $\Ddt V(0)=v'$.
Fixed points of $\mathfrak P_\gamma$ correspond to periodic Jacobi fields along $\gamma$.
Moreover, $\mathfrak P_\gamma$ preserves the symplectic form $\varpi$ of $T_{\gamma(0)}M\oplus T_{\gamma(0)}M$
defined by:
\[\varpi\big((v,v'),(w,w')\big)=g(v,w')-g(v',w).\]
\subsection{Periodic frames and trivializations}
Consider a smooth periodic orthonormal frame $\mathbf T$ along $\gamma$, i.e.,
a smooth family $[0,1]\ni t\mapsto T_t$ of isomorphisms:
\begin{equation}\label{eq:periodicframe}
T_t:\R^n\longrightarrow T_{\gamma(t)}M
\end{equation}
 with $T_0=T_1$, and \begin{equation}\label{eq:deltaij}g(T_te_i,T_te_j)=\epsilon_i\delta_{ij},\end{equation}
 where $\{e_i\}_{i=1,\ldots,n}$ is the canonical basis of $\R^n$, $\epsilon_i\in\{-1,1\}$ and
 $\delta_{ij}$ is the Kronecker symbol.
 The existence of such frame is guaranteed by the orientability assumption on the closed
 geodesic.
The pull-back by $T_t$ of the metric $g$ gives a symmetric nondegenerate bilinear form
$G$ on $\R^n$, whose index is the same as the index of $g$; note that this pull-back does not depend
on $t$, by the orthogonality assumption on the frame $\mathbf T$. In the sequel, we will
also denote by $G:\R^n\to\R^n$ the symmetric linear operator defined by $(Gv)\cdot w$;
by \eqref{eq:deltaij}, $G$ satisfies:
\begin{equation}\label{eq:Gsymmetry}
G^2=\mathrm{Id}.
\end{equation}
Moreover, the pull-back of the linearized Poincar\'e map $\mathfrak P_\gamma$ by the isomorphism $T_0\oplus T_0:\R^n\oplus\R^n\to T_{\gamma(0)}M\oplus T_{\gamma(0)}M$
gives a linear endomorphism of $\R^n\oplus\R^n$ that will be denoted by $\mathfrak P$.

 For all $t\in\left]0,1\right]$, define by $\mathcal H^\gamma_t$ the Hilbert space
 of all $H^1$-vector fields $V$ along $\gamma\vert_{[0,t]}$ satisfying:
 \[T_0^{-1}V(0)=T_t^{-1}V(t).\]
 Observe that the definition of $\mathcal H^\gamma_t$ depends on the choice of the periodic frame $\mathbf T$, however,
 $\mathcal H^\gamma_1$, which is the space of all periodic vector fields along $\gamma$, does not depend
 on $\mathbf T$. Although in principle there is no necessity of fixing a specific Hilbert space
 inner product, it will be useful to have one at disposal, and this will be chosen as follows.
 For all $t\in\left]0,1\right]$, consider the Hilbert space:
 \[H^1_{\mathrm{per}}\big([0,t],\R^n\big)=\Big\{\overline V\in H^1\big([0,t],\R^n):\overline V(0)=\overline V(t)\Big\};\]
 a natural Hilbert space inner product in $H^1_{\mathrm{per}}\big([0,t],\R^n\big)$ is given by:
 \begin{equation}\label{eq:innprodhi1per}
 \langle\overline V,\overline W\rangle=\overline V(0)\cdot\overline W(0)+\int_0^t\overline V'(s)\cdot\overline W'(s)\,\mathrm ds,
 \end{equation}
 where $\cdot$ is the Euclidean inner product in $\R^n$.
 The map $\Psi_t:\mathcal H^\gamma_t\to H^1_{\mathrm{per}}\big([0,t],\R^n\big)$ defined by $\Psi_t(V)=\overline V$,
 where $\overline V(s)=T_s^{-1}(V(s))$ is a linear isomorphism; the space $\mathcal H^\gamma_t$ will be endowed with
 the pull-back of the inner product \eqref{eq:innprodhi1per} by the isomorphism $\Psi_t$.
 Denote by $\overline R_t\in\mathrm{End}(\R^n)$ the pull-back by $T_t$ of the endomorphism $R_{\gamma(t)}=R(\dot\gamma,\cdot)\dot\gamma$
 of $T_{\gamma(t)}M$:
 \[\overline R_t=T_t^{-1}\circ R_{\gamma(t)}\circ T_t;\]
 observe that $t\mapsto\overline R_t$ is a smooth map of $G$-symmetric endomorphisms of $\R^n$. Finally, denote by $\Gamma_t\in\mathrm{End}(\R^n)$ the
 \emph{Christoffel symbol} of the frame $\mathbf T$, defined by:
 \[\Gamma_t(v)=T_t^{-1}\big(\Ddt V\big)-\frac{\mathrm d}{\mathrm dt}\overline V(t),\]
 where $\overline V$ is any vector field satisfying $\overline V(t)=v$, and $V=\Psi^{-1}_t(\overline V)$.
A straightforward computation shows that $\Gamma_t$ is $G$-anti-symmetric for all $t$.

 The push-forward  by $\Psi_t$ of the index form $I_\gamma$ on $\mathcal H^\gamma_t$ is given by the bounded
 symmetric bilinear form $\overline I_t$ on $H^1_{\mathrm{per}}\big([0,t],\R^n\big)$ defined by:
 \begin{multline}\label{eq:defoverlineIt}
 \overline I_t(\overline V,\overline W)=\int_0^tG\big(\overline V'(s),\overline W'(s)\big)+G\big(\Gamma_s\overline V(s),\overline W'(s)\big)+
 G\big(\overline V'(s),\Gamma_s\overline W(s)\big)\\+G\big(\Gamma_s\overline V(s),\Gamma_s\overline W(s)\big)+G\big(\overline R_s\overline V(s),\overline W(s)\big)\,\mathrm ds.
 \end{multline}
 Finally, for $t\in\left]0,1\right]$, we will consider the isomorphism \[\Phi_t:H^1_{\mathrm{per}}\big([0,t],\R^n\big)\to H^1_{\mathrm{per}}\big([0,1],\R^n\big),\]
 defined by $\overline V\mapsto\widetilde V$, where $\widetilde V(s)=\overline V(st)$, $s\in[0,1]$.
 The push-forward by $\Phi_t$ of the bilinear form $\overline I_t$ is given by the bounded symmetric
 bilinear form $\widetilde I_t$ on $H^1_{\mathrm{per}}\big([0,1],\R^n\big)$ defined by:
 \begin{multline}\label{eq:deftildeIt}
 \widetilde I_t(\widetilde V,\widetilde W)=\frac1{t^2}\int_0^1G\big(\widetilde V'(r),\widetilde W'(r)\big)+tG\big(\Gamma_{tr}\widetilde V(r),\widetilde W'(r)\big)+
t G\big(\widetilde V'(r), \Gamma_{tr}\widetilde W(r)\big)\\+t^2G\big(\Gamma_{tr}\widetilde V(r),\Gamma_{tr}\widetilde W(r)\big)+t^2G\big(\overline R_{tr}\widetilde V(r),\widetilde W(r)\big)\,\mathrm dr.
  \end{multline}

\subsection{Spectral flow of a periodic geodesic}
For $t\in\left]0,1\right]$, define the Fredholm bilinear form $B_t$ on the Hilbert space $H^1_{\mathrm{per}}\big([0,1],\R^n\big)$
by setting:
\begin{equation}\label{eq:defformaBt}
B_t=t^2\cdot\widetilde I_t.
\end{equation}
From \eqref{eq:deftildeIt}, one sees immediately that the map $\left]0,1\right]\ni t\mapsto B_t$ can be extended continuously to $t=0$ by setting:
\[B_0(\widetilde V,\widetilde W)=\int_0^1G\big(\widetilde V'(r),\widetilde W'(r)\big)\,\mathrm dr.\]
Observe that $\Ker(B_0)$ is $n$-dimensional, and it consists of all constant vector fields.
For all $t\in\left[0,1\right]$, the bilinear form $\widetilde I_t$ on $H^1_{\mathrm{per}}\big([0,1],\R^n\big)$ is
represented with respect to the inner product \eqref{eq:innprodhi1per} by a compact perturbation of the
symmetry $\mathfrak J$ of $H^1_{\mathrm{per}}\big([0,1],\R^n\big)$ given by $\widetilde V\mapsto G\widetilde V$.
\begin{defin}\label{defin:spectral}
The \emph{spectral flow} $\spfl(\gamma)$ of the closed geodesic $\gamma$ is defined as the spectral flow of the continuous path of
Fredholm bilinear forms $[0,1]\ni t\mapsto B_t$ on the Hilbert space $H^1_{\mathrm{per}}\big([0,1],\R^n\big)$.
\end{defin}

It is a non trivial fact that the definition of spectral flow along a closed geodesic
does not depend on the choice of a periodic orthonormal frame along the geodesic.
This result is obtained in \cite{BenPic} by determining an explicit formula
giving the spectral flow in terms of some other integers associated to the geodesic,
such as the \emph{Maslov index} and the concavity index. The Maslov index is a symplectic
invariant, which is computed as an intersection number in the Lagrangian Grasmannian
of a symplectic vector space; it will be denoted by $\iMaslov(\gamma)$.

Recall that a \emph{Jacobi field} along $\gamma$ is a smooth vector field $J$ along $\gamma$ that satisfies the
second order linear equation:
\[\DDdt J(t)=R\big(\dot\gamma(t),J(t)\big)\,\dot\gamma(t),\quad t\in[0,1];\]
let us denote by $\mathcal J_\gamma$ the $2n$-dimensional real vector space of all Jacobi fields along $\gamma$.
Let us introduce the following spaces:
\begin{eqnarray*}
&&\mathcal J_\gamma^{\text{per}}=\Big\{J\in\mathcal J_\gamma:J(0)=J(1),\ \Ddt J(0)=\Ddt J(1)\Big\},\\
&&\mathcal J_\gamma^0=\Big\{J\in\mathcal J_\gamma:J(0)=J(1)=0\Big\},\quad\text{and}\\ &&\mathcal J_\gamma^\star=\Big\{J\in\mathcal J_\gamma:J(0)=J(1)\Big\}.
\end{eqnarray*}
It is well known that $\mathcal J_\gamma^{\text{per}}$ is the kernel of the index form $I_\gamma$ defined in \eqref{eq:defindexform},
while $\mathcal J_\gamma^0$ is the kernel of the restriction of the index form to the space of vector fields
along $\gamma$ vanishing at the endpoints. We denote by $\mathrm n_{\text{per}}(\gamma)$ and $\mathrm n_0(\gamma)$ the
dimensions of $\mathcal J_\gamma^{\text{per}}$ and $\mathcal J_\gamma^0$ respectively. The nonnegative integer
$\mathrm n_{\text{per}}(\gamma)$ is the nullity of $\gamma$ as a periodic geodesic, i.e., the nullity of the Hessian
of the geodesic action functional at $\gamma$ in the space of closed curves. Observe that $\mathrm n_{\text{per}}(\gamma)\ge1$,
as $\mathcal J_\gamma^{\text{per}}$ contains the one-dimensional space spanned by the tangent field $J=\dot\gamma$.
Similarly, $\mathrm n_0(\gamma)$ is the nullity of $\gamma$ as a fixed endpoint geodesic, i.e., it is the nullity of
the Hessian of the geodesic action functional at $\gamma$ in the space of fixed endpoints curves in $M$.
In this case, $\mathrm n_0(\gamma)>0$ if and only if $\gamma(1)$ is conjugate to $\gamma(0)$ along $\gamma$.
The \emph{index of concavity} of $\gamma$, that will be denoted by
$\mathrm i_{\text{conc}}(\gamma)$ is a nonnegative integer invariant associated to
periodic solutions of Hamiltonian systems. In our notations, $\mathrm i_{\text{conc}}(\gamma)$ is equal to
the index of the symmetric bilinear form:
\[(J_1,J_2)\longmapsto g\big(\Ddt J_1(1)-\Ddt J_1(0),J_2(0)\big)\]
defined on the vector space $\mathcal J_\gamma^\star$. It is not hard to show
that this bilinear form is symmetric, in fact, it is given by the restriction of
the index form $I_\gamma$ to $\mathcal J_\gamma^\star$.

\begin{teo}\label{Morseperiodic}
Let $(M,g)$ be a semi-Riemannian manifold and let $\gamma:[0,1]\to M$ be a closed oriented geodesic in $M$.
Then, the spectral flow $\spfl(\gamma)$ is given by the following formula:
\begin{equation}\label{eq:formulaspectralflowperiodic}
\spfl(\gamma)=\Dim\big(\mathcal J_\gamma^{\text{per}}\cap\mathcal J_\gamma^0\big)-\iMaslov(\gamma)-\mathrm i_{\text{conc}}(\gamma)-\mathrm n_-(g),
\end{equation}
where $\mathrm n_-(g)$ is the index of the metric tensor $g$.
\end{teo}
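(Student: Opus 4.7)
The plan is to compute $\spfl(\gamma)$ by decomposing the path $t \mapsto B_t$ on $H^1_{\mathrm{per}}([0,1],\R^n)$ into three structurally distinct pieces. Two features of the setup are crucial. First, $B_t$ is, uniformly in $t$, a compact perturbation of the fixed symmetry $\mathfrak J:\widetilde V \mapsto G\widetilde V$, so that the spectral flow is in a sense ``algebraic'': only the endpoints and the degeneracy instants contribute, and at each degeneracy the contribution is computed via the symmetry of $\mathfrak J$. Second, the ambient Hilbert space admits a splitting as the sum of the fixed-endpoint subspace $H^1_0([0,1],\R^n)$ (fields vanishing at $0$, hence at $1$ by periodicity) and a $2n$-dimensional complement which, at $t=1$, is naturally identified via Jacobi interpolation with a subspace transverse to $\mathcal J_\gamma^0$ and containing $\mathcal J_\gamma^\star$.

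On the fixed-endpoint subspace $H^1_0([0,1],\R^n)$, the restriction of $t \mapsto B_t$ is precisely the path whose spectral flow is computed by the semi-Riemannian Morse index theorem of \cite{asian, topology}; that theorem identifies this restricted spectral flow, up to the degeneracy correction governed by $\Dim(\mathcal J_\gamma^0)$, with $-\iMaslov(\gamma)$. On the finite-dimensional complement, the contribution to $\spfl(\gamma)$ is the signed index of the restriction of $\widetilde I_1$: integration by parts together with the Jacobi equation $\DDdt J = RJ$ shows that this restriction coincides with the symmetric concavity form $(J_1,J_2)\mapsto g\bigl(\Ddt J_1(1)-\Ddt J_1(0),J_2(0)\bigr)$, whose index is by definition $\mathrm i_{\text{conc}}(\gamma)$. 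Finally, the kernel of $B_0$ is the $n$-dimensional space of constant $\R^n$-valued vector fields, on which $B_0$ vanishes identically, and on which $\mathfrak J$ acts as $G$; under the Phillips convention, the initial endpoint therefore contributes $-\mathrm n_-(g)$, matching the dimension of the negative eigenspace of $G$.

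Assembling these three pieces and accounting for the degeneracy overlap $\mathcal J_\gamma^{\text{per}} \cap \mathcal J_\gamma^0$ (which, under the Phillips convention at $t=1$, contributes $+\Dim(\mathcal J_\gamma^{\text{per}}\cap\mathcal J_\gamma^0)$ because these are crossings at the right endpoint that belong to both pieces of the decomposition) yields \eqref{eq:formulaspectralflowperiodic}. The main technical obstacle is implementing the splitting \emph{uniformly in $t$}: $H^1_0([0,1],\R^n)$ is a fixed closed subspace, but its $B_t$-orthogonal complement drifts with $t$ through the Christoffel symbol $\Gamma_{tr}$ and the curvature operator $\overline R_{tr}$, so the decomposition of the spectral flow requires the varying-domain machinery of Section~\ref{sec:spectrflowvarying} together with Corollary~\ref{thm:flussospettrnondipende} to ensure that no spurious crossings are introduced. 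A related subtlety is the careful sign bookkeeping at $t=0$ and $t=1$: the degenerate endpoints $B_0$ and $B_1$ must be treated consistently, with the $-\mathrm n_-(g)$ emerging from the spectral projection of $\mathfrak J$ on $\ker(B_0)$ and the $+\Dim(\mathcal J_\gamma^{\text{per}}\cap\mathcal J_\gamma^0)$ emerging from the matching convention on $\ker(B_1)$.
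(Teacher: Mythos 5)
The paper does not actually prove Theorem~\ref{Morseperiodic}: it cites \cite[Theorem~5.6]{BenPic}. However, the reduction you propose is in the same spirit as what the paper later does in Proposition~\ref{thm:formularidotta} (apply the finite-dimensional reduction of Proposition~\ref{thm:BenPic} with $\mathcal V=\mathcal H_o$, then combine with the fixed-endpoint identity \eqref{spectralflow0}), specialized at $z=1$. The overall strategy is sound, but the bookkeeping in your proposal contains errors that would prevent it from yielding \eqref{eq:formulaspectralflowperiodic}.

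First, a dimensional slip: $H^1_0([0,1],\R^n)$ has codimension $n$ in $H^1_{\mathrm{per}}([0,1],\R^n)$ (the only constraint on a periodic field is that its common value at $0$ and $1$ vanish), so any direct-sum complement is $n$-dimensional, not $2n$-dimensional. The $2n$ count belongs to the space of all Jacobi fields, not to the complement. Second, and more substantively, your attribution of the $-\mathrm n_-(g)$ term to the degenerate endpoint $B_0$ via ``$\mathfrak J$ restricted to $\Ker(B_0)$ is $G$'' is incorrect as an argument and incorrect as an accounting. The Phillips endpoint convention is not determined by the symmetry restricted to the kernel; what enters the reduction formula \eqref{eq:formteo4.5} at $t=0$ are the quantities $\mathrm n_-\bigl(B_0\vert_{\mathcal V_0\times\mathcal V_0}\bigr)$, $\dim(\mathcal V\cap\mathcal V_0)$ and $\dim\bigl(\mathcal V\cap\Ker(B_0)\bigr)$, where $\mathcal V_0=\mathcal H_o^{\perp_{B_0}}\cap\mathcal H_1$ consists of constant maps. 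Since $B_0$ vanishes identically on constants and $\mathcal H_o$ contains no nonzero constants, all three quantities are zero at $z=1$: the endpoint $B_0$ contributes nothing. The $-\mathrm n_-(g)$ actually lives inside $\lambda_\gamma^o$, i.e.\ inside the fixed-endpoint Morse index theorem $\lambda_\gamma^o=\mathrm n_0(\gamma)-\mathrm n_-(g)-\iMaslov(\gamma)$, which your proposal truncates to ``$-\iMaslov(\gamma)$ up to a $\Dim(\mathcal J_\gamma^0)$ correction,'' thereby dropping exactly the term you then try to recover at $B_0$. Third, you do not account for the term $-\mathrm n_0(\gamma)=-\dim\bigl(\mathcal H_o\cap\mathcal H_o^{\perp_{B_1}}\bigr)$ in the reduction at $t=1$; it is essential, and it cancels against the $+\mathrm n_0(\gamma)$ inside $\lambda_\gamma^o$ to produce the final formula. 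Finally, the ``uniform-in-$t$ splitting'' worry is a red herring here: the point of Proposition~\ref{thm:BenPic} for paths of the form symmetry-plus-compact is precisely that one restricts to a \emph{fixed} subspace $\mathcal V$ and pays only endpoint terms; the varying-domain machinery of Section~\ref{sec:spectrflowvarying} is used in the paper for the continuity of $\lambda_\gamma$ on $\mathds S^1\setminus\{1\}$, not for this reduction.
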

\begin{proof}
See \cite[Theorem~5.6]{BenPic}.
\end{proof}

Formula \eqref{eq:formulaspectralflowperiodic} proves in particular that the definition of spectral flow
for a periodic geodesic $\gamma$ does not depend on the choice of an orthonormal frame along $\gamma$.

\end{section}

\begin{section}{The spectral flow function}
\label{sec:spectralflowfunction}
\subsection{The basic data}\label{sub:basicdata}
As described in Section~\ref{sec:spectralflow}, the choice of a smooth periodic orthonormal frame along a closed orientation preserving semi-Riemannian
geodesic produces the following objects:
\begin{itemize}
\item a non degenerate symmetric bilinear form $G$ on $\R^n$ and a symplectic form $\varpi$ on $\R^n\oplus\R^n$ defined by
$\varpi\big((v,v'),(w,w')\big)=G(v,w')-G(v',w)$;
\item a smooth $1$-periodic curve $\Gamma:\R\to\mathrm{End}(\R^n)$ of $G$-anti-symmetric linear endomorphisms of $\R^n$;
\item a smooth $1$-periodic curve $\overline R:\R\to\mathrm{End}_G(\R^n)$ of $G$-symmetric linear endomorphisms of $\R^n$;
\item a linear endomorphism $\mathfrak P:\R^n\oplus\R^n\to\R^n\oplus\R^n$ that preserves the symplectic form $\varpi$.
\end{itemize}
We will use a complexification of these data. More precisely, $G$ will be extended by sesquilinearity on $\C^n$,
$\Gamma$ and $\overline R$ will be extended to $\C$-linear endomorphisms of $\C^n$, and $\mathfrak P$ will be extended
to $\C$-linear endomorphisms of $\C^{2n}$.
Let $\mathcal H$ be the complex Hilbert space $H^1\big([0,1],\C^n)$ endowed with inner product:
\[\langle \widetilde V,\widetilde W\rangle=\widetilde V(0)\cdot \widetilde W(0)+\int_0^1\widetilde V'(r)\cdot\widetilde W'(r)\,\mathrm dr,\]
where now $\cdot$ denotes the canonical Hermitian product in $\C^n$: $v\cdot w=\sum_{j=1}^nv_j\overline w_j$.
Given a unit complex number $z$, let $\mathcal H_z$ denote the Hilbert subspace of $\mathcal H$ defined by:
\begin{equation}\label{eq:defHz}
\mathcal H_z=\big\{\widetilde V\in\mathcal H:\widetilde V(1)=z\widetilde V(0)\big\};
\end{equation}
moreover, we will denote by $\mathcal H_o$ the subspace:
\begin{equation}\label{eq:defHo}
\mathcal H_o=\big\{\widetilde V\in\mathcal H:\widetilde V(0)=\widetilde V(1)=0\big\}.
\end{equation}
For $t\in[0,1]$, let $B_t:\mathcal H\times\mathcal H\to\C$ denote the bounded Hermitian form defined by:
\begin{multline}\label{eq:defBtRnCn}
B_t(\widetilde V,\widetilde W)=\int_0^1G\big(\widetilde V'(r),\widetilde W'(r)\big)+tG\big(\Gamma_{tr}\widetilde V(r),\widetilde W'(r)\big)+
t G\big(\widetilde V'(r),\Gamma_{tr}\widetilde W(r)\big)\\+t^2G\big(\Gamma_{tr}\widetilde V(r),\Gamma_{tr}\widetilde W(r)\big)+
t^2G\big(\overline R_{tr}\widetilde V(r),\widetilde W(r)\big)\,\mathrm dr.
\end{multline}
With the above data, the Jacobi equation along $\gamma$ gives the following second order linear homogeneous equation for vector fields
in $\C^n$:
\begin{equation}\label{eq:JacobieqCn}
\widetilde V''(r)+2\Gamma_r\widetilde V'(r)+(\Gamma'_r+\Gamma_r^2-\overline R_r)\widetilde V(r)=0.
\end{equation}
The linear map $\mathfrak P:\C^n\oplus\C^n\to\C^n\oplus\C^n$ is given by:
\[\mathfrak P(v,v')=\big(\widetilde V(1),\widetilde V'(1)+\Gamma_0\widetilde V(1)\big),\]
where $\widetilde V\in C^2\big([0,1],\C^n\big)$ is the unique solution of \eqref{eq:JacobieqCn} satisfying $\widetilde V(0)=v$ and
$\widetilde V'(0)=v'-\Gamma_0v$. We observe that $t\to B_t$ can extended in the obviuos way for $t\in[0,+\infty)$, and $B_N$ corresponds with the index form associated to the $N$-th iterate $\gamma^{(N)}$.

\subsection{The spectral flow function on the circle}\label{sub:spflfunction}
As in Section~\ref{sec:spectralflow}, it is easy to see that for all $t\in[0,1]$ and all $z\in\mathds S^1$,
the restriction of the Hermitian form $B_t$ to $\mathcal H_z\times\mathcal H_z$ is represented by a compact
perturbation of the symmetry $\mathfrak J$ of $\mathcal H$ defined by $\widetilde V\mapsto G\widetilde V$;
observe that each $\mathcal H_z$ is invariant by $\mathfrak J$.
\begin{defin}\label{thm:defspflofunction}
The \emph{spectral flow function} $\lambda_\gamma:\mathds S^1\to\N$ is defined by:
\[\lambda_\gamma(z)=\text{spectral flow of $[0,1]\ni t\mapsto B_t$ on the space $\mathcal H_z\times\mathcal H_z$}.\]
\end{defin}
Let us recall that the spectral flow of a continuous path of symmetric bilinear forms $B$ on a real Hilbert space $H$ equals the spectral
flow of the continuous path of Fredholm Hermitian forms obtained by taking the sesquilinear extension of $B$ on the complexified Hilbert space
$H^\C$.
In particular, $\lambda_\gamma(1)=\mathfrak{sf}(\gamma)$.

More generally, given an integer $N\ge1$, we can define $\lambda_\gamma(z;N)$ as the spectral flow of the
path $t\mapsto B_{Nt}$ on $\mathcal H_z\times\mathcal H_z$. Then:
\begin{equation}\label{eq:lambda1Nsf}
\lambda_\gamma(1,N)=\mathfrak{sf}\big(\gamma^{(N)}\big),
\end{equation}
where $\gamma^{(N)}$ is the $N$-th iterate of $\gamma$.

It will also be useful to introduce the following notation:
\begin{equation}\label{eq:deflambda0}
\lambda_\gamma^o=\text{spectral flow of $[0,1]\ni t\mapsto B_t$ on the space $\mathcal H_o\times\mathcal H_o$};
\end{equation}
it is shown in \cite{GiaPicPor} that
\begin{equation}\label{spectralflow0}
\lambda_\gamma^o=\mathrm n_0(\gamma)-\mathrm n_-(g)-\iMaslov(\gamma).
\end{equation}
\subsection{Continuity and jumps of the spectral flow function}
The reader will easily check by an immediate partial integration argument that equation \eqref{eq:JacobieqCn} characterizes
the elements in the kernel of the Hermitian form $B_1$ in \eqref{eq:defBtRnCn}; more precisely:
\begin{lem}\label{thm:nucleoB1Poincare}
$\widetilde V\in\mathcal H$ is in the kernel of $B_1:\mathcal H_z\times\mathcal H_z\to\C$ if and only if it satisfies
\eqref{eq:JacobieqCn} and the boundary conditions:
\[\widetilde V(1)=z\cdot\widetilde V(0),\quad\text{and}\quad \widetilde V'(1)=z\cdot\widetilde V'(0).\]
Thus, $B_1$ is degenerate on $\mathcal H_z$ if and only if $z$ is in the spectrum of the linearized Poincar\'e map
$\mathfrak P_\gamma$, and $\mathrm{dim}\big(\mathrm{Ker}(B_1\vert_{\mathcal H_z\times\mathcal H_z})\big)=\mathrm{dim}\big(\mathrm{Ker}(\mathfrak P_\gamma-z\cdot\mathrm{Id})\big)$.\qed
\end{lem}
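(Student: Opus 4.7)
The plan is to verify the lemma by a standard integration-by-parts argument, extracting first the Euler--Lagrange equation and then the natural boundary condition, and finally identifying the resulting space of solutions with the eigenspace of $\mathfrak P_\gamma$ at $z$.

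The first observation I would make is that the integrand of $B_1$ can be reorganized by grouping the $\Gamma$-cross terms with the derivative terms; using that $\Gamma_r$ is $G$-anti-symmetric, one can write
\[
B_1(\widetilde V,\widetilde W)=\int_0^1\Bigl[G\bigl(\widetilde V'+\Gamma_r\widetilde V,\ \widetilde W'+\Gamma_r\widetilde W\bigr)+G(\overline R_r\widetilde V,\widetilde W)\Bigr]\,\mathrm dr.
\]
Now I would integrate by parts, pushing the derivative off $\widetilde W$ and using anti-symmetry of $\Gamma_r$ once more to absorb the $\Gamma_r\widetilde W$ piece. The bulk term becomes $-G\bigl(\widetilde V''+2\Gamma_r\widetilde V'+(\Gamma'_r+\Gamma_r^2-\overline R_r)\widetilde V,\widetilde W\bigr)$, and the boundary term is $\bigl[G(\widetilde V'+\Gamma_r\widetilde V,\widetilde W)\bigr]_0^1$.

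Next, I would test against $\widetilde W\in\mathcal H_o$: the boundary term vanishes, the fundamental lemma of the calculus of variations together with the non-degeneracy of $G$ forces $\widetilde V$ to satisfy \eqref{eq:JacobieqCn}. Once this is known, the bulk integral is identically zero for \emph{every} $\widetilde W$, so the vanishing of $B_1(\widetilde V,\widetilde W)$ on $\mathcal H_z$ reduces to the vanishing of the boundary term. For $\widetilde W\in\mathcal H_z$ we have $\widetilde W(1)=z\widetilde W(0)$, and by the $1$-periodicity $\Gamma_1=\Gamma_0$, so the boundary contribution reads
\[
\bar z\,G\bigl(\widetilde V'(1)+\Gamma_0\widetilde V(1),\widetilde W(0)\bigr)-G\bigl(\widetilde V'(0)+\Gamma_0\widetilde V(0),\widetilde W(0)\bigr).
\]
Since $\widetilde W(0)\in\C^n$ is arbitrary and $G$ is non-degenerate, we obtain $\bar z(\widetilde V'(1)+\Gamma_0\widetilde V(1))=\widetilde V'(0)+\Gamma_0\widetilde V(0)$. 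Using $\widetilde V(1)=z\widetilde V(0)$ and $|z|=1$, this collapses to $\widetilde V'(1)=z\widetilde V'(0)$. Conversely, if $\widetilde V$ satisfies \eqref{eq:JacobieqCn} together with the two boundary conditions, reversing the computation shows $B_1(\widetilde V,\widetilde W)=0$ for every $\widetilde W\in\mathcal H_z$.

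For the dimension claim, I would use the description of $\mathfrak P$ given right after \eqref{eq:JacobieqCn}: solutions of \eqref{eq:JacobieqCn} are parametrized by $(v,v'):=(\widetilde V(0),\widetilde V'(0)+\Gamma_0\widetilde V(0))\in\C^n\oplus\C^n$, and the two boundary conditions are precisely
\[
\bigl(\widetilde V(1),\widetilde V'(1)+\Gamma_0\widetilde V(1)\bigr)=z\bigl(\widetilde V(0),\widetilde V'(0)+\Gamma_0\widetilde V(0)\bigr),
\]
i.e.\ $\mathfrak P(v,v')=z(v,v')$. Hence the map $\widetilde V\mapsto(v,v')$ is a linear isomorphism from $\Ker(B_1|_{\mathcal H_z\times\mathcal H_z})$ onto $\Ker(\mathfrak P-z\cdot\mathrm{Id})$, giving the dimension identity. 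The only subtle point — really a bookkeeping issue rather than an obstacle — is to keep the sesquilinear conventions consistent when factoring $z$ and $\bar z$ out of $G$, and to remember to use $\Gamma_1=\Gamma_0$; once these are tracked carefully, the argument is entirely routine.
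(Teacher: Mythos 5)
Your proposal is correct and follows exactly the approach the paper has in mind: the text preceding the lemma defers it to the reader as ``an immediate partial integration argument,'' and your computation (regroup the integrand using $G$-anti-symmetry of $\Gamma$, integrate by parts to peel off the Euler--Lagrange equation and the natural boundary term, then identify the solution space with $\Ker(\mathfrak P-z\,\mathrm{Id})$) is precisely that argument carried out in detail. The only minor omission is that you do not explicitly note the regularity bootstrap from $H^1$ to $C^2$ for weak solutions of \eqref{eq:JacobieqCn}, but that is standard and is equally elided by the paper.
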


\begin{lem}
The map $\mathds S^1\ni z\mapsto\mathcal H_z\subset\mathcal H$ is a continuous\footnote{In fact, the very same argument shows
that $z\mapsto\mathcal H_z$ is a real analytic map. The same conclusion holds in Lemma~\ref{thm:J2gammacont}.} map of closed subspaces of $\mathcal H$.
\end{lem}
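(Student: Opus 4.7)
The plan is to realize $\mathcal H_z$ as the kernel of a bounded linear surjection depending continuously (in fact linearly) on $z$, and then invoke the result cited earlier in the paper that kernels of a continuous family of surjective bounded maps form a continuous family of closed subspaces (\cite[Lemma~2.9]{asian}).

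Concretely, I would define $F_z\colon\mathcal H\to\C^n$ by
\[
F_z(\widetilde V)=\widetilde V(1)-z\widetilde V(0).
\]
Since the evaluation maps $\mathrm{ev}_0,\mathrm{ev}_1\colon\mathcal H\to\C^n$ are bounded (they are continuous linear maps on $H^1([0,1],\C^n)$), $F_z$ is bounded linear, and clearly $\mathcal H_z=\Ker(F_z)$ by the very definition \eqref{eq:defHz}. To see that $F_z$ is surjective, given $w\in\C^n$ one may simply take $\widetilde V(r)=r\,w$, for which $\widetilde V(0)=0$ and $\widetilde V(1)=w$, so $F_z(\widetilde V)=w$ for every $z\in\mathds S^1$.

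Continuity of $z\mapsto F_z$ in the operator norm is immediate: for $z,z'\in\mathds S^1$ and $\widetilde V\in\mathcal H$,
\[
\bigl\Vert F_z(\widetilde V)-F_{z'}(\widetilde V)\bigr\Vert_{\C^n}=\vert z-z'\vert\cdot\bigl\Vert\widetilde V(0)\bigr\Vert_{\C^n}\le\vert z-z'\vert\cdot\Vert\mathrm{ev}_0\Vert\cdot\Vert\widetilde V\Vert_{\mathcal H},
\]
so $\Vert F_z-F_{z'}\Vert_{\mathrm{op}}\le\vert z-z'\vert\cdot\Vert\mathrm{ev}_0\Vert$. In fact $z\mapsto F_z=\mathrm{ev}_1-z\,\mathrm{ev}_0$ is affine in $z$, which justifies the footnote remark about real analyticity of $z\mapsto\mathcal H_z$.

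Applying \cite[Lemma~2.9]{asian} to the continuous family of surjections $\{F_z\}_{z\in\mathds S^1}$ then yields that $z\mapsto\Ker(F_z)=\mathcal H_z$ is a continuous path of closed subspaces of $\mathcal H$, which is the claim. There is no real obstacle here; the only point requiring attention is to verify the surjectivity uniformly in $z$, but this is settled by the explicit construction $\widetilde V(r)=rw$ above.
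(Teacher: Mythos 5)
Your proof is correct and takes exactly the same approach as the paper: the paper's one-line argument also defines $F_z(\widetilde V)=\widetilde V(1)-z\widetilde V(0)$ and invokes \cite[Lemma~2.9]{asian}. You have merely filled in the routine verifications of boundedness, surjectivity, and norm-continuity that the paper leaves implicit.
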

\begin{proof}
$\mathcal H_z$ is the kernel of a continuous family $F_z:\mathcal H\to\C^n$ of \emph{surjective} bounded linear maps,
defined by $F_z(\widetilde V)=\widetilde V(1)-z\widetilde V(0)$ (see \cite[Lemma~2.9]{asian}).
\end{proof}
\begin{prop}\label{thm:lambdagammaconst}
The spectral flow function $\lambda_\gamma$ is constant on every connected subset $\mathcal A$ of $\mathds S^1\setminus\{1\}$ that does not contain
elements in the spectrum of the linearized Poincar\'e map $\mathfrak P_\gamma$.
\end{prop}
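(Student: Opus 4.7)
The strategy is a direct application of Corollary~\ref{thm:flussospettrnondipende}, with ambient Hilbert space $\mathcal H$, varying domains $z\mapsto\mathcal H_z$, and the continuous path of self-adjoint Fredholm operators on $\mathcal H$ representing the Hermitian forms $B_t$. Since every connected subset of $\mathds S^1\setminus\{1\}$ is an open arc, hence path-connected, it suffices to show that for every continuous path $[c,d]\ni s\mapsto z(s)\in\mathcal A$ one has $\lambda_\gamma(z(c))=\lambda_\gamma(z(d))$. The map $s\mapsto\mathcal H_{z(s)}$ is continuous in the space of closed subspaces of $\mathcal H$ by the preceding lemma, so we are in the setup of Section~\ref{sec:spectrflowvarying}.

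The Fredholm property of $B_t\vert_{\mathcal H_z\times\mathcal H_z}$ is immediate from the observation made in Subsection~\ref{sub:spflfunction}: the operator on $\mathcal H$ associated to $B_t$ has the form $\mathfrak J+K_t$, with $\mathfrak J\widetilde V=G\widetilde V$ a symmetry and $K_t$ compact; and each $\mathcal H_z$ is $\mathfrak J$-invariant, because $G$ is a $\C$-linear endomorphism of $\C^n$ and $(G\widetilde V)(1)=z(G\widetilde V)(0)$ whenever $\widetilde V(1)=z\widetilde V(0)$. Projection onto $\mathcal H_z$ followed by restriction therefore preserves the symmetry-plus-compact structure, so the restricted operators are self-adjoint and Fredholm on $\mathcal H_z$.

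It remains to verify the hypothesis of Corollary~\ref{thm:flussospettrnondipende}, namely that $\Dim\bigl(\Ker(B_t\vert_{\mathcal H_{z(s)}})\bigr)$ is constant in $s$ for $t=0$ and for $t=1$. At $t=1$, Lemma~\ref{thm:nucleoB1Poincare} identifies this kernel with $\Ker(\mathfrak P_\gamma-z\cdot\mathrm{Id})$, which is trivial since $z(s)\notin\sigma(\mathfrak P_\gamma)$ for all $s$. At $t=0$ one has $B_0(\widetilde V,\widetilde W)=\int_0^1G(\widetilde V'(r),\widetilde W'(r))\,\mathrm dr$; a short computation shows that, when $z\neq1$, the derivative map $\mathcal H_z\to L^2([0,1],\C^n)$, $\widetilde W\mapsto\widetilde W'$, is surjective (given $u\in L^2$, set $\widetilde W(r)=\tfrac1{z-1}\int_0^1u(\sigma)\,\mathrm d\sigma+\int_0^ru(\sigma)\,\mathrm d\sigma$). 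Hence $B_0(\widetilde V,\cdot)\equiv0$ on $\mathcal H_z$ implies $\int_0^1G(\widetilde V',u)\,\mathrm dr=0$ for every $u\in L^2$; by nondegeneracy of $G$ this forces $\widetilde V'\equiv0$, and the boundary condition $\widetilde V(1)=z\widetilde V(0)$ with $z\neq1$ then gives $\widetilde V\equiv0$.

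With both endpoint kernels trivial along the path, Corollary~\ref{thm:flussospettrnondipende} yields the desired independence of $s$, completing the proof. The delicate point is the analysis at $t=0$: on the full space $\mathcal H$ the form $B_0$ has an $n$-dimensional kernel of constant vector fields, and it is exactly the restriction to $\mathcal H_z$ with $z\neq1$ that kills these constants. This is precisely the reason why the statement has to exclude $z=1$, and foreshadows the separate treatment reserved to that point (via the finite dimensional reduction of Proposition~\ref{thm:BenPic}).
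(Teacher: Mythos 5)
Your proof is correct and follows essentially the same route as the paper: both invoke Corollary~\ref{thm:flussospettrnondipende} after checking that the endpoint forms $B_0$ and $B_1$ are nondegenerate on $\mathcal H_z$ for $z\in\mathcal A$, with nondegeneracy of $B_1$ coming from Lemma~\ref{thm:nucleoB1Poincare} and the spectral hypothesis, and nondegeneracy of $B_0$ coming from the boundary condition at $z\neq1$. The only difference is that you spell out, via surjectivity of the derivative map $\mathcal H_z\to L^2$, the step the paper dismisses as ``easy to see''.
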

\begin{proof}
This follows easily from Corollary~\ref{thm:flussospettrnondipende}. By Lemma~\ref{thm:nucleoB1Poincare}, the assumption on the spectrum
of the Poincar\'e map says that $B_1$ does not degenerates on $\mathcal H_z$, for all $z\in\mathcal A$.
Moreover, it is easy to see that $B_0$ is nondegenerate on $\mathcal H_z$ for all $z\ne1$ (while the kernel of $B_0\vert_{\mathcal H_1\times\mathcal H_1}$
consists of all constant maps, and it has dimension $n$). This concludes the proof.
\end{proof}
We conclude that $\lambda_\gamma$ has a finite number of jumps on $\mathds S^1$, that can occur only at those points in the spectrum
of $\mathfrak P_\gamma$ that lie in $\mathds S^1$ or at $z=1$.
\smallskip

Let us study now the behavior of $\lambda_\gamma$ around $z=1$; the result of Proposition~\ref{thm:lambdagammaconst}
cannot be extended to $z=1$, because $B_0$ is always degenerate on $\mathcal H_1$.
We will show that, unless the metric tensor $g$ is
positive definite, then $\lambda_\gamma$ is indeed discontinuous at $z=1$. To this aim, we will need to determine an alternative
description of the function $\lambda_\gamma$ based on a finite dimensional reduction for the computation
of the spectral flow.

\subsection{A finite dimensional reduction}
\label{sub:findimreduction}
By a \emph{symmetry} of a Hilbert space $H$ we mean a bounded self-adjoint operator $\mathfrak J$ on $H$ satisfying $\mathfrak J^2=1$.
For paths $[a,b]\ni t\mapsto T_t\in\mathcal F_{\mathrm{sa}}(H)$ of Fredholm self-adjoint operators of the form $T_t=\mathfrak J+K_t$,
where $\mathfrak J$ is a fixed symmetry of $H$ and $K_t$ is a compact self-adjoint operator on $H$ for all $t$, the spectral flow
$\spfl(T,[a,b])$ depends only on the endpoints $T_a$ and $T_b$. More precisely, $\spfl(T,[a,b])$ is the relative dimension
of the generalized negative spaces of $T_t$ at $t=a$ and at $t=b$. We want to compare the spectral flow of a path $T$ on $H$
with the spectral flow of its restriction to a finite codimensional subspace of $H$.
Let us recall the following result from \cite{BenPic}:
\begin{prop}\label{thm:BenPic}
Let $T:[a,b]\to\mathcal F_{\mathrm{sa}}(H)$ be a continuous curve where each $T_t$ is a compact perturbation of a fixed symmetry
$\mathfrak J$ of $H$, and let $\mathcal V\subset H$ be a closed subspace of finite codimension in $H$. Set
$B_t=\langle T_t\cdot,\cdot\rangle$ and $\mathcal V_t=(T_t\mathcal V)^\perp=\mathcal V^{\perp_{B_t}}$; then:
\begin{multline}\label{eq:formteo4.5}
\spfl(T,[a,b])-\spfl\big(P_{\mathcal V}T\vert_{\mathcal V},[a,b]\big)=\mathrm n_-\big(B_a\vert_{\mathcal V_a\times\mathcal V_a}\big)+\mathrm{dim}(\mathcal V\cap\mathcal V_a)-
\mathrm{dim}\big(\mathcal V\cap\mathrm{Ker}(B_a)\big)\\-\mathrm n_-\big(B_b\vert_{\mathcal V_b\times\mathcal V_b}\big)-\mathrm{dim}(\mathcal V\cap\mathcal V_b)+
\mathrm{dim}\big(\mathcal V\cap\mathrm{Ker}(B_b)\big),
\end{multline}
where $\mathrm n_-$ denotes the index of a Hermitian (or symmetric in the real case) bilinear form.
\end{prop}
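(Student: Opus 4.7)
The plan is to convert both spectral flows in \eqref{eq:formteo4.5} into differences of relative Morse indices, using the fact that for symmetry-plus-compact paths the spectral flow depends only on the endpoints, and then to derive the formula from a single-time linear-algebraic identity applied at $t=a$ and $t=b$.

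First, I would exploit the hypothesis $T_t=\mathfrak J+K_t$: the difference of negative spectral projections $P_-(T_t)-P_-(\mathfrak J)$ remains compact along the path, so the relative Morse index $\mathrm n_-(B_s)-\mathrm n_-(B_{s'})$ is well defined as a Fredholm index for any $s,s'\in[a,b]$, and under the endpoint convention of \cite{Phillips} it coincides with the spectral flow: $\spfl(T,[a,b])=\mathrm n_-(B_a)-\mathrm n_-(B_b)$ in the relative sense. Since $\mathcal V$ has finite codimension, the restricted path $P_{\mathcal V}T_t\vert_{\mathcal V}=P_{\mathcal V}\mathfrak J\vert_{\mathcal V}+P_{\mathcal V}K_t\vert_{\mathcal V}$ is still an affine family of self-adjoint Fredholm operators on $\mathcal V$ with compact increments from the fixed self-adjoint Fredholm operator $P_{\mathcal V}\mathfrak J\vert_{\mathcal V}$; the same structural argument yields $\spfl(P_{\mathcal V}T\vert_{\mathcal V},[a,b])=\mathrm n_-(B_a\vert_{\mathcal V})-\mathrm n_-(B_b\vert_{\mathcal V})$ in the same relative sense.

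Second, I would establish the following single-time linear-algebraic identity: for any Fredholm Hermitian form $B$ on $H$ of symmetry-plus-compact type and any closed subspace $\mathcal V\subset H$ of finite codimension,
\begin{equation}\label{eq:planidentity}
\mathrm n_-(B)-\mathrm n_-(B\vert_{\mathcal V})=\mathrm n_-\bigl(B\vert_{\mathcal V_B\times\mathcal V_B}\bigr)+\mathrm{dim}(\mathcal V\cap\mathcal V_B)-\mathrm{dim}\bigl(\mathcal V\cap\Ker B\bigr),
\end{equation}
where $\mathcal V_B=\mathcal V^{\perp_B}=T^{-1}(\mathcal V^\perp)$ is finite-dimensional, since $T$ is Fredholm and $\mathcal V^\perp$ is finite-dimensional. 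The left-hand side is the well-defined relative index from the previous paragraph; the first term on the right-hand side is genuinely finite-dimensional. The proof of \eqref{eq:planidentity} is essentially finite-dimensional: set $\mathcal W=\mathcal V+\mathcal V_B$, which has finite codimension in $H$; observe that $\mathcal V\cap\mathcal V_B$ is exactly the radical of $B\vert_{\mathcal V}$ and that $\mathcal V\cap\Ker B$ is the intersection of this radical with $\Ker B$; then pass to the quotient $\mathcal W/(\mathcal V\cap\Ker B)$ and perform a Sylvester-type index count using the $B$-orthogonal decomposition into $\mathcal V$ and $\mathcal V_B$. Applying \eqref{eq:planidentity} at $t=a$ and at $t=b$ and taking the difference reproduces exactly the right-hand side of \eqref{eq:formteo4.5}.

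The main obstacle is the careful bookkeeping of the kernel corrections in \eqref{eq:planidentity}: when $B\vert_{\mathcal V}$ is degenerate, $\Ker B$ and $\mathcal V\cap\mathcal V_B$ interact nontrivially, and one must subtract the degenerate directions inside $\mathcal V$ from the count; the quotient reduction above is designed precisely to disentangle these contributions. A secondary subtlety is verifying that the boundary convention for spectral flow at degenerate endpoints is compatible with the reduction $\spfl(T,[a,b])=\mathrm n_-(B_a)-\mathrm n_-(B_b)$, so that subtracting the identities at $t=a$ and $t=b$ produces no extra boundary contribution; this amounts to a direct consistency check against the conventions of \cite{Phillips}.
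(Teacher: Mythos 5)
The paper does not actually prove Proposition~\ref{thm:BenPic}; it cites \cite[Theorem~4.3]{BenPic}, so there is no in-text argument to compare against. Your overall plan -- use the fact that for compact perturbations of a fixed symmetry the spectral flow depends only on the endpoints, interpret both spectral flows as relative Morse indices, and reduce the whole formula to a single-time relative-index identity evaluated at $t=a$ and at $t=b$ -- is the right shape of argument for such a reduction formula, and the single-time identity you write down is in fact true.

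Your sketch of that identity's proof, however, has a genuine gap. You propose to work inside $\mathcal W=\mathcal V+\mathcal V_B$ (then in $\mathcal W/(\mathcal V\cap\Ker B)$), but $\mathcal W$ can be a proper subspace of $H$ even modulo $\Ker B$, and the missing complement carries negative directions invisible from inside $\mathcal W$. Concretely, take $H=\R^2$, $B=\mathrm{diag}(1,-1)$, $\mathcal V=\mathrm{span}(e_1+e_2)$: then $B\vert_{\mathcal V}=0$, $\mathcal V_B=\mathcal V^{\perp_B}=\mathcal V$, $\Ker B=\{0\}$, so $\mathcal W=\mathcal V$ is one-dimensional with $B\vert_{\mathcal W}=0$, and any index count performed inside $\mathcal W$ yields $0$, whereas $\mathrm n_-(B)-\mathrm n_-(B\vert_{\mathcal V})=1-0=1$ (the correct right-hand side is also $0+1-0=1$). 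The repair is to shrink $\mathcal V$ rather than enlarge it: choose a closed complement $\mathcal V'$ of the radical $R=\mathcal V\cap\mathcal V_B$ inside $\mathcal V$; then $B\vert_{\mathcal V'}$ is nondegenerate and Fredholm, hence invertible, so $H=\mathcal V'\oplus W$ is a genuine $B$-orthogonal splitting with $W=(\mathcal V')^{\perp_B}$ finite-dimensional and containing $R$, $\mathcal V_B$ and $\Ker B$. This gives $\mathrm n_-(B)-\mathrm n_-(B\vert_{\mathcal V})=\mathrm n_-(B\vert_W)$ as a legitimate finite-dimensional quantity, and your identity reduces to the elementary statement that for a Hermitian form $C$ on a finite-dimensional space $W$, with $R\subset W$ totally isotropic and $\mathcal V_B=R^{\perp_C}$, one has $\mathrm n_-(C)=\mathrm n_-(C\vert_{\mathcal V_B})+\dim R-\dim(R\cap\Ker C)$, proved by quotienting out $\Ker C$ and invoking the hyperbolic-pair reduction for isotropic subspaces. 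The two points you explicitly defer -- commensurability of the various negative spectral subspaces so that the relative indices and their cocycle identity are literal, and the compatibility of $\spfl=\mathrm n_-(B_a)-\mathrm n_-(B_b)$ with the degenerate-endpoint convention of \cite{Phillips} -- are real but routine once the above correction is made.
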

\begin{proof}
See \cite[Theorem~4.3]{BenPic}.
\end{proof}
We apply this result to the path of bilinear forms $[0,1]\ni t\mapsto B_t$ given in \eqref{eq:defBtRnCn}, to the Hilbert space $H=\mathcal H_z$ in \eqref{eq:defHz}
and to the closed finite codimensional subspace $\mathcal V=\mathcal H_o$ defined in \eqref{eq:defHo}, obtaining:
\begin{prop}\label{thm:formularidotta}
For all $z\in\mathds S^1$, the following equality holds:\footnote{Here $\delta_{z,1}$ is the Kronecker symbol, equal to $1$ if $z=1$ and to $0$ otherwise.}
\begin{equation}\label{eq:rellambdagammalambdao}
\lambda_\gamma(z)-\lambda_\gamma^o=(1-\delta_{z,1})\cdot\mathrm n_-(g)-\mathrm n_0(\gamma)+\mathrm{dim}\big(\mathbb J_\gamma^{(1)}(z)\big)-
\mathrm n_-(b_z),
\end{equation}
where $\mathbb J_\gamma^{(1)}(z)$ is the finite dimensional vector space:
\begin{multline*}
\mathbb J_\gamma^{(1)}(z)=\mathcal H_o\cap\mathrm{Ker}(B_1\vert_{\mathcal H_z\times\mathcal H_z})\\=
\big\{\widetilde V\in C^2\big([0,1],\C^n\big):\widetilde V\ \text{solution of \eqref{eq:JacobieqCn}},\ \widetilde V(0)=\widetilde V(1)=0,\  \widetilde V'(1)=z\widetilde V'(0)\big\}
\end{multline*}
and $b_z$ is the Hermitian form on the finite dimensional
space:
\[\mathbb J^{(2)}_\gamma(z)=\big\{\widetilde V\in C^2\big([0,1],\C^n\big):\widetilde V\ \text{solution of \eqref{eq:JacobieqCn}},\ \widetilde V(1)=z\widetilde V(0)\big\},\]
given by the restriction of $B_1$, or, more explicitely:
\[b_z(\widetilde V,\widetilde W)=G\big(\bar z\widetilde V'(1)-\widetilde V'(0),\widetilde W(0)\big).\]
\end{prop}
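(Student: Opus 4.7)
The plan is to apply Proposition~\ref{thm:BenPic} directly, taking $H=\mathcal H_z$, the continuous path of compact perturbations of the symmetry $\mathfrak J$ given by $t\mapsto B_t\vert_{\mathcal H_z\times\mathcal H_z}$, and the closed finite codimensional subspace $\mathcal V=\mathcal H_o\subset\mathcal H_z$. With these choices, the left-hand side of \eqref{eq:formteo4.5} is precisely $\lambda_\gamma(z)-\lambda_\gamma^o$: the first term is $\lambda_\gamma(z)$ by definition, and restriction to $\mathcal H_o$ kills the boundary condition $\widetilde V(1)=z\widetilde V(0)$, so the restricted path gives the fixed-endpoint spectral flow $\lambda_\gamma^o$ (the definition \eqref{eq:deflambda0} depends on $\mathcal H_o$ alone and not on $z$). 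It then remains to match each of the six terms on the right-hand side with the corresponding quantity in \eqref{eq:rellambdagammalambdao}.

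The key identifications come from computing $\mathcal V_t=\mathcal H_o^{\perp_{B_t}}$ inside $\mathcal H_z$ by an integration-by-parts argument. For $t=1$, using $G$-antisymmetry of $\Gamma$, one sees that $B_1(\widetilde V,\widetilde W)=0$ for every $\widetilde W\in\mathcal H_o$ forces $\widetilde V$ to satisfy the Jacobi equation \eqref{eq:JacobieqCn}, so $\mathcal V_1=\mathbb J^{(2)}_\gamma(z)$; consequently $\mathrm n_-(B_1\vert_{\mathcal V_1\times\mathcal V_1})=\mathrm n_-(b_z)$, and the same partial integration evaluated on two Jacobi fields (using $\Gamma_0=\Gamma_1$ and $|z|^2=1$) produces the stated boundary formula $b_z(\widetilde V,\widetilde W)=G\bigl(\bar z\widetilde V'(1)-\widetilde V'(0),\widetilde W(0)\bigr)$. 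For $t=0$, the analogous computation shows $\mathcal V_0=\{\widetilde V(r)=a+(z-1)ar:a\in\C^n\}$, a space on which $B_0(\widetilde V,\widetilde V)=|z-1|^2 G(a,a)$; hence $\mathrm n_-(B_0\vert_{\mathcal V_0\times\mathcal V_0})=(1-\delta_{z,1})\,\mathrm n_-(g)$. Finally, $\mathcal H_o\cap\mathcal V_0=\{0\}$ and $\mathcal H_o\cap\Ker(B_0)=\{0\}$ for every $z\in\mathds S^1$ (in particular, for $z=1$ the constants in the kernel of $B_0$ are killed by the vanishing endpoint condition), so these two contributions to \eqref{eq:formteo4.5} are zero.

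The remaining two terms are read off from Lemma~\ref{thm:nucleoB1Poincare} and the definitions: $\mathcal H_o\cap\Ker\bigl(B_1\vert_{\mathcal H_z\times\mathcal H_z}\bigr)$ consists exactly of complex Jacobi fields with $\widetilde V(0)=\widetilde V(1)=0$ and $\widetilde V'(1)=z\widetilde V'(0)$, i.e.\ $\mathbb J^{(1)}_\gamma(z)$, while $\mathcal H_o\cap\mathcal V_1$ is the space of Jacobi fields vanishing at $0$ and $1$, whose complex dimension equals the real dimension $\mathrm n_0(\gamma)$ of $\mathcal J_\gamma^0$. Substituting all six identifications into \eqref{eq:formteo4.5} yields exactly \eqref{eq:rellambdagammalambdao}.

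The only slightly delicate step I anticipate is the $t=0$ boundary computation, where one must be careful with the complex conjugate $\bar z$ appearing in the Hermitian extension of $G$ and with the degenerate situation $z=1$ (to obtain the factor $(1-\delta_{z,1})$ cleanly); everything else is a routine bookkeeping exercise given Proposition~\ref{thm:BenPic}.
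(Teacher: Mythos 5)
Your proposal is correct and follows essentially the same route as the paper's own proof: apply Proposition~\ref{thm:BenPic} with $H=\mathcal H_z$, $\mathcal V=\mathcal H_o$, and the path $t\mapsto B_t$ on $[0,1]$, then match the six boundary terms of \eqref{eq:formteo4.5} to the quantities in \eqref{eq:rellambdagammalambdao} by computing $\mathcal V_0$, $\mathcal V_1$, and the relevant kernels and intersections. The identifications you give (the affine space $\mathcal V_0$ with index $(1-\delta_{z,1})\,\mathrm n_-(g)$, $\mathcal V_1=\mathbb J^{(2)}_\gamma(z)$ with $\mathrm n_-(B_1\vert_{\mathcal V_1\times\mathcal V_1})=\mathrm n_-(b_z)$, $\mathcal H_o\cap\mathcal V_1$ of dimension $\mathrm n_0(\gamma)$, and $\mathcal H_o\cap\Ker(B_1\vert_{\mathcal H_z\times\mathcal H_z})=\mathbb J^{(1)}_\gamma(z)$) are precisely the identities the paper checks.
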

\begin{proof}
Formula~\eqref{eq:rellambdagammalambdao} follows directly by applying Proposition~\ref{thm:BenPic} to the above setup.
It is easily obtained after checking the following identities:
\begin{itemize}
\item $\mathcal H_o^{\perp_{B_0}}\cap\mathcal H_z=\big\{\widetilde V:\widetilde V(t)=A(z-1)t+A,\ \text{for some $A\in\C^n$}\big\}$,
thus the index of the restriction of $B_0$ to such space is equal to $0$ if $z=1$ and is equal to $\mathrm n_-(G)=\mathrm n_-(g)$ if $z\ne1$;
\smallskip

\item $\mathcal H_o\cap\big(\mathcal H_o^{\perp_{B_0}}\cap\mathcal H_z\big)=\{0\}$;
\smallskip

\item $\Ker\big(B_0\vert_{\mathcal H_z\times\mathcal H_z}\big)=\big\{\widetilde V:\widetilde V''=0,\ \big(\widetilde V(1),\widetilde V'(1)\big)=z\cdot
\big(\widetilde V(0),\widetilde V'(0)\big)\big\}$, such space is $\{0\}$ if $z\ne1$, and it consists of constant functions if $z=1$;
\smallskip

\item $\mathcal H_o\cap\big(\Ker\big(B_0\vert_{\mathcal H_z\times\mathcal H_z}\big)\big)=\{0\}$;
\smallskip

\item $\Ker\big(B_1\vert_{\mathcal H_z\times\mathcal H_z}\big)=\big\{\widetilde V\ \text{solution of \eqref{eq:JacobieqCn}}:\big(\widetilde V(1),\widetilde V'(1)\big)=z\cdot
\big(\widetilde V(0),\widetilde V'(0)\big)\big\}$;
\smallskip

\item $\mathcal H_o\cap\Ker\big(B_1\vert_{\mathcal H_z\times\mathcal H_z}\big)=\mathbb J_\gamma^{(1)}(z)$;
\smallskip

\item $\mathcal H_o^{\perp_{B_1}}\cap\mathcal H_z=\mathbb J_\gamma^{(2)}(z)$;
\smallskip

\item $\mathcal H_o\cap\mathcal H_o^{\perp_{B_1}}=\mathrm{Ker}\big(B_1\vert_{\mathcal H_o\times\mathcal H_o})=\big\{\widetilde V\ \ \text{solution of \eqref{eq:JacobieqCn}},\ \widetilde V(0)=\widetilde V(1)=0\big\}$.\qedhere
\end{itemize}
\end{proof}

We recall that the term $\mathrm n_0(\gamma)$ in \eqref{eq:rellambdagammalambdao} is the nullity of $\gamma$ as a \emph{fixed endpoints geodesic}, i.e., the multiplicity of $\gamma(1)$ as
conjugate point to $\gamma(0)$ along $\gamma$. In particular, it coincides with the dimension of the space:
\begin{equation}\label{eq:jacobizeriestremi}
\big\{\widetilde V\in C^2\big([0,1],\C^n\big):\widetilde V\ \text{solution of \eqref{eq:JacobieqCn}},\ \widetilde V(0)=\widetilde V(1)=0\big\}.
\end{equation}
 Observe that for all $z\in\mathds S^1$, the space $\mathbb J^{(1)}_\gamma(z)$ is contained in
\eqref{eq:jacobizeriestremi}; it follows that
\begin{equation}\label{eq:terminenegativo}
\phantom{\qquad\forall\,z\in\mathds S^1.}-\mathrm n_0(\gamma)+\mathrm{dim}\big(\mathbb J_\gamma^{(1)}(z)\big)\le0,\qquad\forall\,z\in\mathds S^1.
\end{equation}

\begin{lem}\label{thm:J2gammacont}
The map $z\mapsto\mathbb J^{(2)}_\gamma(z)\subset\mathcal H$ is continuous at those points $z\in\mathds S^1$ that are not in the spectrum of $\mathfrak P$.
\end{lem}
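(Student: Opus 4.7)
The plan is to reduce the statement to the fact, already used in the preceding lemma on $z\mapsto\mathcal H_z$, that kernels of a continuous family of surjective linear maps form a continuous family of closed subspaces. First I parametrize the finite-dimensional space $\mathbb J^{(2)}_\gamma(z)$ by initial data: every $\widetilde V \in \mathbb J^{(2)}_\gamma(z)$ is uniquely determined by the pair $\big(\widetilde V(0),\widetilde V'(0)+\Gamma_0\widetilde V(0)\big) \in \C^{2n}$, following the convention adopted for $\mathfrak P$ in Subsection~\ref{sub:basicdata}. Let $\iota:\C^{2n}\to\mathcal H$ denote the bounded linear injection sending such initial data to the corresponding solution of \eqref{eq:JacobieqCn}. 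The map $\iota$ is independent of $z$, has finite-dimensional image consisting of all Jacobi fields, and is obtained from standard continuous dependence of ODE solutions on initial data.

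Next I rewrite the boundary condition $\widetilde V(1)=z\widetilde V(0)$ as a linear condition on $(v,v')\in\C^{2n}$. Using the definition of $\mathfrak P$, it reads $\pi_1\big(\mathfrak P(v,v')\big)=zv$, where $\pi_1:\C^n\oplus\C^n\to\C^n$ denotes projection onto the first factor. Setting
\[
L_z \;:=\; \pi_1\circ\big(\mathfrak P - z\cdot\mathrm{Id}\big)\;:\;\C^{2n}\longrightarrow\C^n,
\]
one obtains $\mathbb J^{(2)}_\gamma(z)=\iota(\ker L_z)$, and the assignment $z\mapsto L_z$ is (real) analytic. The key observation is that when $z\notin\mathrm{spec}(\mathfrak P)$ the operator $\mathfrak P - z\cdot\mathrm{Id}$ is an isomorphism of $\C^{2n}$; composing with the surjection $\pi_1$ shows that $L_z$ is surjective, so $\Dim\Ker L_z = n$ is locally constant near any $z_0\notin\mathrm{spec}(\mathfrak P)$.

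It remains to transfer the continuity from $\Ker L_z \subset \C^{2n}$ to $\iota(\Ker L_z) \subset \mathcal H$. By the argument already invoked in the preceding lemma, \cite[Lemma~2.9]{asian}, the assignment $z\mapsto\Ker L_z$ is a continuous map of closed subspaces of $\C^{2n}$ in a neighborhood of $z_0$. Because $\iota$ is a fixed bounded linear injection on the finite-dimensional space $\C^{2n}$, a continuous local basis $(w_1(z),\ldots,w_n(z))$ of $\Ker L_z$ yields a continuous local basis $\big(\iota(w_1(z)),\ldots,\iota(w_n(z))\big)$ of $\mathbb J^{(2)}_\gamma(z)$, and Gram--Schmidt orthonormalization then produces a norm-continuous family of orthogonal projections onto $\mathbb J^{(2)}_\gamma(z)$.

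The only step that requires any genuine input is the surjectivity of $L_z$ outside $\mathrm{spec}(\mathfrak P)$; once that elementary observation is made, everything else is routine finite-dimensional linear algebra plus continuous dependence on parameters. The same chain of arguments, with ``continuous'' replaced by ``real analytic'' throughout, gives the stronger statement recorded in the footnote to the lemma.
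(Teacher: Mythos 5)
Your proof is correct and follows essentially the same route as the paper: identify the $2n$-dimensional solution space with $\C^{2n}$ via initial data, recognize $\mathbb J^{(2)}_\gamma(z)$ as the kernel of $L_z=\pi_1\circ(\mathfrak P-z\cdot\mathrm{Id})$, observe that $L_z$ is surjective off $\mathrm{spec}(\mathfrak P)$, and invoke \cite[Lemma~2.9]{asian}. The only differences are cosmetic: you use the $\Gamma_0$-corrected coordinates $(\widetilde V(0),\widetilde V'(0)+\Gamma_0\widetilde V(0))$, which is the more careful match to the paper's definition of $\mathfrak P$, and you spell out the routine transfer of subspace-continuity through the fixed injection $\iota$, which the paper leaves implicit.
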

\begin{proof}
It suffices to show that $z\mapsto\mathbb J^{(2)}_\gamma(z)$ is a continuous family of closed subspaces of the finite dimensional
closed subspace $S=\big\{\widetilde V\in C^2\big([0,1],\C^n\big):\widetilde V\ \text{is solution of \eqref{eq:JacobieqCn}}\big\}$ of $\mathcal H$.
If we identify $S\cong\C^n\oplus\C^n$ via the map $\widetilde V\mapsto\big(\widetilde V(0),\widetilde V'(0)\big)$, then
$\mathbb J^{(2)}_\gamma(z)$ is the kernel of the linear map $L_z=\pi_1\circ(\mathfrak P-z\cdot\mathrm I):\C^n\oplus\C^n\to\C^n$, where $\pi_1:\C^n\oplus\C^n\to\C^n$
is the projection on the first summand. Clearly, $z\mapsto L_z$ is continuous.
If $z\in\mathds S^1$ is not in the spectrum of $\mathfrak P$, then $\mathfrak P-z\cdot\mathrm I$ is an isomorphism, and thus $L_z$ is surjective, which concludes
the proof (see \cite[Lemma~2.9]{asian}).
\end{proof}
\begin{cor}\label{thm:saltoinz=1}
Let $\mathcal A$ be a connected subset of $\mathds S^1$ that contains $z=1$, and that does not contain any eigenvalue of the linearized
Poincar\'e map $\mathfrak P_\gamma$. Then,
$\lambda_\gamma$ is constant equal to $\lambda_\gamma(1)+\mathrm n_-(g)$ on $\mathcal A\setminus\{1\}$.
\end{cor}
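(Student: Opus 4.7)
The plan is to combine Proposition~\ref{thm:lambdagammaconst} with the finite-dimensional reduction of Proposition~\ref{thm:formularidotta}. Applying Proposition~\ref{thm:lambdagammaconst} separately to each connected component of $\mathcal A\setminus\{1\}$ (there are at most two) shows that $\lambda_\gamma$ is constant on each of them. It therefore suffices to fix a single $z\in\mathcal A\setminus\{1\}$ and verify the identity $\lambda_\gamma(z)=\lambda_\gamma(1)+\mathrm n_-(g)$; by the argument that follows, the resulting value does not depend on the component in which $z$ is chosen.

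Subtracting the identity of Proposition~\ref{thm:formularidotta} at $z=1$ from the same identity at $z$, the only sensitive term is the Kronecker prefactor $(1-\delta_{z,1})\,\mathrm n_-(g)$, yielding
\[
\lambda_\gamma(z)-\lambda_\gamma(1)=\mathrm n_-(g)+\bigl[\dim\mathbb J_\gamma^{(1)}(z)-\dim\mathbb J_\gamma^{(1)}(1)\bigr]+\bigl[\mathrm n_-(b_1)-\mathrm n_-(b_z)\bigr].
\]
So the proof reduces to checking that the bracketed correction vanishes. The first difference is handled by the explicit description in Proposition~\ref{thm:formularidotta}: any $\widetilde V\in\mathbb J_\gamma^{(1)}(z)$ forces $(0,\widetilde V'(0))\in\C^n\oplus\C^n$ to be an eigenvector of $\mathfrak P$ with eigenvalue $z$, and since by assumption $z\notin\mathrm{spec}(\mathfrak P_\gamma)=\mathrm{spec}(\mathfrak P)$, this forces $\widetilde V\equiv0$, hence $\dim\mathbb J_\gamma^{(1)}(z)=0$. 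What remains is the purely finite-dimensional identity
\[
\mathrm n_-(b_1)-\mathrm n_-(b_z)=\dim\mathbb J_\gamma^{(1)}(1).
\]

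This last identity is the main obstacle. To prove it, I would track the Hermitian form $b_z$ on the varying finite-dimensional subspace $\mathbb J_\gamma^{(2)}(z)\subset\mathcal H$ as $z\to1$ within $\mathcal A$. By Lemma~\ref{thm:J2gammacont}, $z\mapsto\mathbb J_\gamma^{(2)}(z)$ is a continuous family of $n$-dimensional subspaces off the spectrum of $\mathfrak P$, and as $z\to1$ these accumulate on an $n$-dimensional subspace $\mathcal S\subset\mathbb J_\gamma^{(2)}(1)=\mathcal J_\gamma^\star$, with $b_z$ converging to $b_1\vert_{\mathcal S}$. Exploiting the explicit formula $b_z(V,W)=G(\bar z V'(1)-V'(0),W(0))$, the symplectic invariance encoded in the self-adjointness of $b_z$, and the vanishing of $b_1$ on $\mathcal J_\gamma^{\text{per}}\subset\mathcal J_\gamma^\star$, one performs an index count that isolates the contribution of the complement $\mathcal J_\gamma^\star/\mathcal S$ to $\mathrm n_-(b_1)$; combined with the identification $\mathbb J_\gamma^{(1)}(1)=\mathcal J_\gamma^{\text{per}}\cap\mathcal J_\gamma^0$ already noted in Proposition~\ref{thm:formularidotta}, this yields the required jump of the negative index.
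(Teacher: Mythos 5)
Your reduction via Proposition~\ref{thm:formularidotta} is the right starting point, and you correctly use $z\notin\mathrm{spec}(\mathfrak P_\gamma)$ to get $\dim\mathbb J_\gamma^{(1)}(z)=0$ for $z\in\mathcal A\setminus\{1\}$. But you then stop short and declare the ``purely finite-dimensional identity'' $\mathrm n_-(b_1)-\mathrm n_-(b_z)=\dim\mathbb J_\gamma^{(1)}(1)$ to be the main obstacle, implicitly treating $\dim\mathbb J_\gamma^{(1)}(1)$ as a possibly nonzero quantity you must balance against an index jump. You have overlooked that the hypothesis of the corollary applies at $z=1$ as well: $1\in\mathcal A$ and $\mathcal A$ contains \emph{no} eigenvalue of $\mathfrak P_\gamma$, so by Lemma~\ref{thm:nucleoB1Poincare} the form $B_1|_{\mathcal H_1\times\mathcal H_1}$ is nondegenerate and $\mathbb J_\gamma^{(1)}(1)\subset\Ker(B_1|_{\mathcal H_1\times\mathcal H_1})=\{0\}$. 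Thus $\dim\mathbb J_\gamma^{(1)}(z)\equiv 0$ on all of $\mathcal A$, and the only thing left to show is the plain constancy $\mathrm n_-(b_z)=\mathrm n_-(b_1)$ on $\mathcal A$.

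Your sketch for the residual step is then both unnecessarily elaborate and, as written, incorrect in its geometry. You assert that the $n$-dimensional spaces $\mathbb J_\gamma^{(2)}(z)$ ``accumulate on an $n$-dimensional subspace $\mathcal S\subset\mathbb J_\gamma^{(2)}(1)$'' as $z\to1$; but Lemma~\ref{thm:J2gammacont} gives continuity of $z\mapsto\mathbb J_\gamma^{(2)}(z)$ \emph{at} $z=1$ precisely because $1\notin\mathrm{spec}(\mathfrak P)$, so the limit is $\mathbb J_\gamma^{(2)}(1)$ itself and there is no proper $\mathcal S$, no quotient $\mathcal J_\gamma^\star/\mathcal S$, and no extra index count to perform. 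What you do need, and do not supply, is the observation that $\Ker(b_z)$ is the \emph{same} space for every $z\in\mathcal A$ (namely the solutions of \eqref{eq:JacobieqCn} vanishing at both endpoints), because for $z\notin\mathrm{spec}(\mathfrak P_\gamma)$ the map $\mathbb J_\gamma^{(2)}(z)\ni\widetilde V\mapsto\bar z\widetilde V'(1)-\widetilde V'(0)$ is an isomorphism onto $\C^n$. Constant kernel dimension together with the continuity of $z\mapsto(\mathbb J_\gamma^{(2)}(z),b_z)$ on $\mathcal A$ forces $\mathrm n_-(b_z)$ to be constant; that, and not an index-jump computation, is the paper's argument and the correct route to the claimed equality $\lambda_\gamma(z)=\lambda_\gamma(1)+\mathrm n_-(g)$.
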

\begin{proof}
Using formula~\eqref{eq:rellambdagammalambdao}, the reader will easily convince himself that the statement
is equivalent to proving that the quantity $\mathrm{dim}\big(\mathbb J_\gamma^{(1)}(z)\big)-
\mathrm n_-(b_z)$ is constant on every connected subset $\mathcal A$ of $\mathds S^1$ that does not contain
elements in the spectrum of $\mathfrak P_\gamma$. This follows immediately from the continuity of the subspaces $\mathcal A\ni z\mapsto \mathbb J^{(2)}_\gamma(z)$
proved in Lemma~\ref{thm:J2gammacont}, and from the following observations:
\begin{itemize}
\item[(a)]  $\Ker(b_z)=\big\{\widetilde W\ \text{solution of \eqref{eq:JacobieqCn}}:\widetilde W(0)=\widetilde W(1)=0\big\}$, thus $\Ker(b_z)$
does not depend on $z\in\mathcal A$, and $\mathrm n_-(b_z)$ is constant on $\mathcal A$;
\smallskip

\item[(b)] $\mathbb J_\gamma^{(1)}(z)\subset \Ker(B_1\vert_{\mathcal H_z\times\mathcal H_z})$, hence $\mathrm{dim}\big(\mathbb J_\gamma^{(1)}(z)\big)=0$
for all $z\in\mathcal A$.
\end{itemize}
In order to prove (a), note that if $z$ is not in the spectrum of $\mathfrak P_\gamma$, then the map $\mathbb J_\gamma^{(2)}(z)\ni\widetilde V\mapsto\bar z\widetilde V'(1)-\widetilde V'(0)\in\C^n$
is an isomorphism. It follows immediately from the definition of $b_z$ that $\Ker(b_z)=\big\{\widetilde W\in\mathbb J_\gamma^{(2)}(z):\widetilde W(0)=0\big\}$,
which gives the desired conclusion.
\end{proof}
\begin{rem}
Note that, when $1$ is not in the spectrum of $\mathfrak P_\gamma$,  the discontinuity at $z=1$ of $\lambda_\gamma$
occurs only in the non Riemannian case.
\end{rem}
\subsection{Non orientation preserving closed geodesics}\label{nonorientation}
When the geodesic $\gamma$ is non orientation preserving, the definition of the spectral flow given in
Definition~\ref{defin:spectral} does not apply because there is no periodic orthonormal frame along the
geodesic. Let us now indicate briefly how to modify the construction in order to get a well defined notion
of spectral flow also in this case. One can consider an arbitrary smooth orthonormal frame $\mathbf T=(T_t)_{t\in[0,1]}$
along the geodesic as in \eqref{eq:periodicframe}, which will \emph{not} satisfy $T_0=T_1$; denote by $S\in\mathrm{GL}(\C^n)$ the complexification of the isomorphism $T_1^{-1}T_0$.
Then, it would be natural to define the spectral flow $\spfl(\gamma)$ as the spectral flow of the path of Fredholm bilinear forms
$[0,1]\ni t\mapsto B_t$ given in \eqref{eq:defformaBt} on the space:
\[H^1_S\big([0,1],\C^n\big)=\Big\{\widetilde V\in H^1\big([0,1],\C^n\big):\widetilde V(1)=S\widetilde V(0)\Big\}.\]
(compare with Definition~\ref{defin:spectral}). However, with this definition, using Proposition~\ref{thm:BenPic} one checks easily
that formula \eqref{eq:formulaspectralflowperiodic} will not hold in general. More precisely, the right hand side of
 \eqref{eq:formulaspectralflowperiodic} will contain an extra term, given by the index of the restriction of the metric
 tensor $g$ on the image of the operator $S-\mathrm{Id}$.
 This is proved easily with the help of Proposition~\ref{thm:BenPic}, as in the proof of
 Proposition~\ref{thm:formularidotta}.
 Namely, in this case, the $B_0$-orthogonal space
 to $H_0^1\big([0,1],\C^n\big)$ in $H_S^1\big([0,1],\C^n\big)$ is given by the $n$-dimensional space of all affine maps $\widetilde V(t)=(S-\mathrm{Id})Bt+B$, with $B\in\C^n$. The restriction of $B_0$ to such space has index equal to the index of the restriction of
 $g$ to the image of $S-\mathrm{Id}$.
 Note that $S=\mathrm{Id}$ in the orientation preserving case.
 Thus, one way to make the definition independent on the orthogonal frame would be to restrict to frames
 $\mathbf T$ for which the operator $S=T_1^{-1}T_0$ is such that $g$ is positive semi-definite on the image
 of $S-\mathrm{Id}$. This is always possible when $g$ is not negative definite, by a simple linear algebra argument.
 Or, more simply, one could define $\spfl(\gamma)$ as the difference between the spectral flow of the path
 $t\mapsto B_t$ on $H^1_S\big([0,1],\C^n\big)$ minus the index of the restriction of $g$ to $\mathrm{Im}(S-\mathrm{Id})$, which by
 what has been observed, is a quantity independent of the frame. With such definition, formula \eqref{eq:formulaspectralflowperiodic}
 holds also in the non orientation preserving case, and the entire theory developed in this paper carries over
 to the non orientable case.
\subsection{On the jumps of the spectral flow function}
By the results of Proposition~\ref{thm:lambdagammaconst} and Corollary~\ref{thm:saltoinz=1}, we know that
$\lambda_\gamma$ is a piecewise constant function on $\mathds S^1$, whose jumps occur at the eigenvalues of
the linearized Poincar\'e map and at $1$. Thus, $\lambda_\gamma$ is uniquely determined once we know its value
at some given point, say at $z=1$, and the value of the jump of $\lambda_\gamma$ at each discontinuity point.
When $1$ is not an eigenvalue of $\mathfrak P_\gamma$, then the jump of $\lambda_\gamma$ at
$1$ is $\mathrm n_-(g)$, as proved in Corollary~\ref{thm:saltoinz=1}, but $\lim\limits_{\theta\to0^-}\lambda_\gamma(e^{i\theta})=\lim\limits_{\theta\to0^+}\lambda_\gamma(e^{i\theta})$.
In order to give an upper estimate for the jumps at the eigenvalues of $\mathfrak P_\gamma$, we need
the following:
\begin{lem}\label{thm:stimaspflnucleo}
Let $I\ni t\mapsto B_t$ be a continuous curve of Hermitian  forms on a Hilbert space $H$, and
let $I\ni t\mapsto H_t$ be a continuous family of closed subspaces of $H$, such that $B_t\vert_{H_t\times H_t}$
is Fredholm for all $t$.
Assume that a point $t_0$ in the interior of $I$ is an isolated degeneracy instant for $B_t\vert_{H_t\times H_t}$. Then, for
$\varepsilon>0$ small enough, $\big\vert\spfl\big(B,(H_t)_{t\in[t_0-\varepsilon,t_0+\varepsilon]}\big)\big\vert\le\dim\big[\Ker\big(B_{t_0}\vert_{H_{t_0}\times H_{t_0}}\big)\big]$.
\end{lem}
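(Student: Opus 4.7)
The plan is to reduce the statement to the well-known local estimate for the spectral flow of a path of self-adjoint Fredholm operators on a \emph{fixed} Hilbert space at an isolated degeneracy instant. First, restrict attention to a compact subinterval $J=[t_0-\varepsilon,t_0+\varepsilon]\subset I$, and choose a trivialization $(\Phi,H_\star)$ of the continuous family $(H_t)_{t\in J}$. By the very definition of $\spfl\bigl(B;(H_t)_{t\in J}\bigr)$, this quantity coincides with the spectral flow of the path of self-adjoint Fredholm operators $T_t:=P_\star\Phi_t^*A_t\Phi_t\vert_{H_\star}$ on the fixed space $H_\star$, where $A_t$ is the bounded self-adjoint operator on $H$ representing $B_t$. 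The map $\Phi_t\vert_{H_\star}:H_\star\to H_t$ is an isomorphism that pulls the restricted form $B_t\vert_{H_t\times H_t}$ back to the form associated to $T_t$, so $\Ker(T_t)\cong\Ker\bigl(B_t\vert_{H_t\times H_t}\bigr)$ for every $t\in J$. In particular $t_0$ remains an isolated degeneracy instant of $T_t$, with kernel of the same dimension $k:=\Dim\bigl[\Ker\bigl(B_{t_0}\vert_{H_{t_0}\times H_{t_0}}\bigr)\bigr]$.

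It is then enough to prove that, for a continuous path of self-adjoint Fredholm operators $T_t$ on a fixed Hilbert space with an isolated degeneracy at an interior point $t_0$, one has $\bigl\vert\spfl(T,[t_0-\varepsilon,t_0+\varepsilon])\bigr\vert\le\Dim\Ker(T_{t_0})$ for $\varepsilon>0$ small. I would argue this by the standard spectral decomposition trick. Since $T_{t_0}$ is self-adjoint and Fredholm, the point $0$ is isolated in its spectrum and has finite multiplicity $k$. Pick $\delta>0$ so small that $\pm\delta\notin\sigma(T_{t_0})$ and $\sigma(T_{t_0})\cap(-\delta,\delta)=\{0\}$. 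By upper semicontinuity of the spectrum and continuity of the Riesz spectral projector, there exists $\varepsilon>0$ such that $\pm\delta\notin\sigma(T_t)$ for all $t\in[t_0-\varepsilon,t_0+\varepsilon]$, and the spectral projection $Q_t$ onto the invariant subspace corresponding to the part of the spectrum in $(-\delta,\delta)$ is a continuous family of projections of constant rank $k$.

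Setting $K_t:=\mathrm{Im}(Q_t)$, the operator $T_t$ splits orthogonally as $T_t\vert_{K_t}\oplus T_t\vert_{K_t^\perp}$, with the spectrum of $T_t\vert_{K_t^\perp}$ contained in $(-\infty,-\delta]\cup[\delta,+\infty)$. Hence $T_t\vert_{K_t^\perp}$ is a continuous path of invertible self-adjoint operators, which contributes $0$ to the spectral flow. The restriction $T_t\vert_{K_t}$ is a continuous path of Hermitian operators on a $k$-dimensional space; its spectral flow is the signed count of eigenvalues crossing $0$ as $t$ traverses $[t_0-\varepsilon,t_0+\varepsilon]$, and is therefore bounded in absolute value by $k$. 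Adding the two contributions yields the desired estimate. The only point requiring some care is the verification that the cogredience by $\Phi_t\vert_{H_\star}$ preserves kernel dimensions and the isolated-degeneracy assumption, but this is immediate from the fact that $\Phi_t\vert_{H_\star}:H_\star\to H_t$ is an isomorphism at each $t$; beyond this bookkeeping, the argument is entirely the classical one.
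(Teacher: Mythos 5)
Your proof is correct and follows essentially the same route as the paper: both reduce to a fixed Hilbert space via a trivialization of $(H_t)$, then use the (Riesz/functional-calculus) spectral projection onto the part of the spectrum near $0$ to split off a finite-dimensional, rank-$k$ piece on which the spectral flow is the variation of the index function (hence bounded by $k$), the complementary invertible piece contributing nothing. The only stylistic difference is that you make the orthogonal direct-sum decomposition explicit, whereas the paper invokes Phillips' definition directly to identify the spectral flow with the index variation on the finite-dimensional spectral subspace.
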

\begin{proof}
Using a trivialization for the path $I\ni t\mapsto H_t$, it suffices to prove the result for the spectral flow
of a continuous path of Fredholm Hermitian forms $t\mapsto B_t$ on a fixed Hilbert space $H$.
When $H$ is finite dimensional, in which case the spectral flow is the variation of the index function, the
result is elementary and well known. The infinite dimensional case can be reduced to the finite dimensional one by
an argument of functional calculus. More precisely, for all $t$, let $T_t$ be the self-adjoint Fredholm operator
such that $B_t=\langle T_t\cdot,\cdot\rangle$; let $\varepsilon>0$ be small enough so that
the spectrum of $T_{t_0}$ has empty intersection with $\left]-2\varepsilon,2\varepsilon\right[\setminus\{0\}$.
Then, for $t$ sufficiently close to $t_0$, the spectrum of $T_t$ has empty intersection with
$\left[-\varepsilon,\varepsilon\right]\setminus\{0\}$. Thus, denoting by $\chi_{[-\varepsilon,\varepsilon]}$ the
characteristic function of $[-\varepsilon,\varepsilon]$, the map $t\mapsto P_t=\chi_{[-\varepsilon,\varepsilon]}(T_t)$
is a map of finite rank projections which is continuous near $t_0$; the image $H_t$ of $P_t$ is $T_t$-invariant, so that
$\Ker(B_t)=\Ker(B_t\vert_{H_t\times H_t})$.
By definition (see \cite{Phillips}), the spectral flow through $t_0$ of the path $T$ is the variation of index
of the restriction of $B_t$ to $H_t\times H_t$, which reduces the general case to a finite dimensional one.
\end{proof}
With this, we are now able to prove that the jump of $\lambda_\gamma$ at $z\in\mathds S^1$ can be estimated with the dimension
of the $z$-eigenspace of $\mathfrak P_\gamma$.
\begin{cor}\label{thm:stimasalti}
Let $e^{i\theta_0}$ be an eigenvalue of $\mathfrak P_\gamma$. Then:
\[\Big\vert\lim_{\theta\to0^+}\lambda_\gamma\big(e^{i(\theta_0+\theta)}\big)-\lim_{\theta\to0^-}\lambda_\gamma\big(e^{i(\theta_0+\theta)}\big)\Big\vert\le
\mathrm{dim}\big(\Ker(\mathfrak P_\gamma-e^{i\theta_0}\cdot\mathrm I)\big).\]
\end{cor}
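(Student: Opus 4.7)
The plan is to use the machinery of spectral flow on varying domains (Lemma~\ref{thm:lemacont}) together with the kernel estimate of Lemma~\ref{thm:stimaspflnucleo}, in the spirit of the proof of Proposition~\ref{thm:lambdagammaconst}.

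First I would fix $\varepsilon>0$ small enough so that the closed arc $\mathcal A_\varepsilon=\{e^{i(\theta_0+\theta)}:\theta\in[-\varepsilon,\varepsilon]\}$ contains no eigenvalue of $\mathfrak P_\gamma$ other than $e^{i\theta_0}$ and does not contain $z=1$; this is possible since the spectrum of $\mathfrak P_\gamma$ is finite on $\mathds S^1$. By Proposition~\ref{thm:lambdagammaconst}, on each of the two half-arcs $[-\varepsilon,0)$ and $(0,\varepsilon]$ the function $\theta\mapsto\lambda_\gamma(e^{i(\theta_0+\theta)})$ is constant, so it suffices to estimate $|\lambda_\gamma(e^{i(\theta_0+\varepsilon)})-\lambda_\gamma(e^{i(\theta_0-\varepsilon)})|$.

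Next I would apply Lemma~\ref{thm:lemacont} with $[a,b]=[0,1]$ (the parameter carrying $t\mapsto B_t$) and with $s\mapsto\mathcal H_{e^{i(\theta_0+s)}}$ as the continuous family of closed subspaces indexed by $s\in[-\varepsilon,\varepsilon]$. In the notation of \eqref{eq:sommazero}, the horizontal quantities are $\mathfrak h_{-\varepsilon}=\lambda_\gamma(e^{i(\theta_0-\varepsilon)})$ and $\mathfrak h_\varepsilon=\lambda_\gamma(e^{i(\theta_0+\varepsilon)})$. The vertical quantity $\mathfrak v_0$ is the spectral flow of the constant form $B_0$ over the varying domains $\mathcal H_{e^{i(\theta_0+s)}}$: as already observed in the proof of Proposition~\ref{thm:lambdagammaconst}, $B_0$ is nondegenerate on each $\mathcal H_z$ with $z\ne 1$, hence $\mathfrak v_0=0$. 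Consequently, formula \eqref{eq:sommazero} gives
\[\lambda_\gamma(e^{i(\theta_0-\varepsilon)})-\lambda_\gamma(e^{i(\theta_0+\varepsilon)})=-\mathfrak v_1,\]
where $\mathfrak v_1$ is the spectral flow of the constant form $B_1$ over the varying family $s\mapsto\mathcal H_{e^{i(\theta_0+s)}}$, $s\in[-\varepsilon,\varepsilon]$.

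It remains to estimate $|\mathfrak v_1|$. By Lemma~\ref{thm:nucleoB1Poincare}, $B_1$ degenerates on $\mathcal H_{e^{i(\theta_0+s)}}$ precisely when $e^{i(\theta_0+s)}$ is an eigenvalue of $\mathfrak P_\gamma$, which in $\mathcal A_\varepsilon$ happens only at $s=0$, and there the kernel has dimension $\dim\bigl(\Ker(\mathfrak P_\gamma-e^{i\theta_0}\cdot\mathrm I)\bigr)$. Hence $s=0$ is an isolated degeneracy instant, and Lemma~\ref{thm:stimaspflnucleo} (applied after composing with a trivialization of the family $\mathcal H_{e^{i(\theta_0+s)}}$, so as to reduce to a path of Fredholm Hermitian forms on a fixed Hilbert space, as in the definition of spectral flow on varying domains) yields
\[|\mathfrak v_1|\le\dim\bigl(\Ker(\mathfrak P_\gamma-e^{i\theta_0}\cdot\mathrm I)\bigr),\]
for $\varepsilon$ sufficiently small. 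Letting $\varepsilon\to 0^+$ gives the claimed bound.

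The only real subtlety is checking the hypotheses of Lemma~\ref{thm:lemacont}, namely that $P_sB_t|_{\mathcal H_{e^{i(\theta_0+s)}}}$ is self-adjoint Fredholm for each $(s,t)$; this is automatic from the structure of $B_t$ as a compact perturbation of the symmetry $\mathfrak J$ restricted to each $\mathcal H_z$, already used in the proof of Proposition~\ref{thm:lambdagammaconst}. I do not expect any further obstacle.
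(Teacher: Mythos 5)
Your proposal is correct and follows essentially the same route as the paper: reduce the jump to the spectral flow of the fixed form $B_1$ over the varying family $\rho\mapsto\mathcal H_{e^{i(\theta_0+\rho)}}$ by exploiting the nondegeneracy of $B_0$ on $\mathcal H_z$ for $z\ne 1$, and then apply Lemma~\ref{thm:stimaspflnucleo} and Lemma~\ref{thm:nucleoB1Poincare}. The only difference is cosmetic -- you invoke Lemma~\ref{thm:lemacont} explicitly where the paper says ``by concatenation additivity,'' and both arguments implicitly require $e^{i\theta_0}\ne 1$ so that the small arc avoids the degeneracy of $B_0$ at $z=1$.
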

\begin{proof}
By the concatenation additivity of the spectral flow and as $B_0|_{\mathcal H_z\times\mathcal H_z}$ is non-degenerate for $z\not=1$, for $\theta>0$ small enough the difference
$\lambda_\gamma\big(e^{i(\theta_0+\theta)}\big)-\lambda_\gamma\big(e^{i(\theta_0+\theta)}\big)$ is equal to the
spectral flow of the constant Fredholm Hermitian form $B_1$  on the continuous curve of closed subspaces
$[-\theta,\theta]\ni\rho\mapsto\mathcal H_{e^{i(\theta_0+\rho)}}$. By assumption, there is an isolated degeneracy instant
of $B_1$ at $\rho=0$. By Lemma~\ref{thm:stimaspflnucleo}, the jump of $\lambda_\gamma$ at $e^{i\theta_0}$ is less than
or equal to the dimension of $\Ker\big(B_1\vert_{\mathcal H_{e^{i\theta_0}}\times\mathcal H_{e^{i\theta_0}}}\big)$;
this is equal to $\mathrm{dim}\big(\Ker(\mathfrak P_\gamma-e^{i\theta_0}\cdot\mathrm I)\big)$   by Lemma~\ref{thm:nucleoB1Poincare}.
\end{proof}

\end{section}

\begin{section}{The Fourier Theorem}\label{sec:fourier}
We will now fix an integer $N\ge1$ and we set:
\[\omega=e^{\frac{2\pi i}N}.\]
For all $k\ge0$, given $\widetilde V_k\in\mathcal H_{\omega^k}$ we will assume that $\widetilde V_k$ is extended to
a continuous map $\widetilde V_k:\R\to\C^n$ by setting:
\[\phantom{\quad m\in\Z,\ t\in\left[0,1\right[.}\widetilde V_k(t+m)=\omega^{km}\widetilde V_k(t),\quad m\in\Z,\ t\in\left[0,1\right[.\]
\begin{lem}\label{thm:lem1}
The map $\Phi_N:\mathcal H_1\to\bigoplus\limits_{k=0}^{N-1}\mathcal H_{\omega^k}$ defined by:
\[\Phi_N(\widetilde V)=(\widetilde V_0,\ldots,\widetilde V_{N-1}),\]
where:
\begin{equation}\label{eq:defVk}
\widetilde V_k(t)=\frac1N\sum_{j=0}^{N-1}\omega^{-jk}\widetilde V\left(\frac{t+j}N\right),\quad t\in[0,1],
\end{equation}
is a linear isomorphism, whose inverse $\Psi_N:\bigoplus\limits_{k=0}^{N-1}\mathcal H_{\omega^k}\to\mathcal H_1$
is given by:
\[\Psi_N(\widetilde V_0,\ldots,\widetilde V_{N-1})=\widetilde V,\]
where
\begin{equation}\label{eq:defV}
\widetilde V(t)=\sum_{k=0}^{N-1}\widetilde V_k(tN).
\end{equation}
\end{lem}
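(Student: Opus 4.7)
The plan is to view Lemma~\ref{thm:lem1} as a continuous version of the discrete Fourier transform on the cyclic group $\Z/N\Z$, with the subspaces $\mathcal H_{\omega^k}$ playing the role of eigenspaces for the rotation-by-$\frac1N$ action on the loop space. The only nontrivial algebraic tool is the character orthogonality relation
\[
\frac{1}{N}\sum_{j=0}^{N-1}\omega^{j\ell}=\begin{cases}1&\text{if }\ell\equiv0\pmod N,\\ 0&\text{otherwise;}\end{cases}
\]
everything else is bookkeeping, so the proof essentially writes itself once the formulas in \eqref{eq:defVk} and \eqref{eq:defV} are postulated.

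First I would verify that $\Phi_N$ takes $\mathcal H_1$ into $\bigoplus_{k=0}^{N-1}\mathcal H_{\omega^k}$. Regularity is immediate because each $\widetilde V_k$ in \eqref{eq:defVk} is a finite linear combination of rescaled translates of $\widetilde V\in H^1$, hence itself in $H^1$. The quasi-periodicity condition $\widetilde V_k(1)=\omega^k\widetilde V_k(0)$ is obtained by evaluating \eqref{eq:defVk} at $t=1$, reindexing $j\mapsto j-1$ so that the sum runs over $j=1,\ldots,N$, pulling out a factor $\omega^k$, and finally using $\widetilde V(1)=\widetilde V(0)$ to identify the $j=N$ term with the missing $j=0$ term. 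Similarly, I would verify that $\Psi_N$ lands in $\mathcal H_1$: continuity of $\widetilde V(t)=\sum_k\widetilde V_k(tN)$ on $[0,1]$ follows because each extended $\widetilde V_k$ is continuous on $\R$ (the quasi-periodic extension agrees at integer points precisely by the condition $\widetilde V_k(1)=\omega^k\widetilde V_k(0)$), and $H^1$ regularity is likewise preserved; the periodic boundary condition is a one-line computation, $\widetilde V(1)=\sum_k\widetilde V_k(N)=\sum_k\omega^{kN}\widetilde V_k(0)=\sum_k\widetilde V_k(0)=\widetilde V(0)$.

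Finally, I would check $\Phi_N\circ\Psi_N=\mathrm{Id}$ and $\Psi_N\circ\Phi_N=\mathrm{Id}$ directly. Both amount to a single substitution followed by an interchange of sums and a single application of the orthogonality relation above: in the first composition, the inner sum over $j$ collapses to $N\delta_{k,k'}$ and leaves $\widetilde V_{k'}$; in the second, the inner sum over $k$ collapses to $N\delta_{j,0}$ (interpreted modulo $N$) and leaves $\widetilde V(t)$. The only step where care is actually needed — and the only point that could reasonably be called an obstacle — is the consistent interpretation of $\widetilde V_k(tN)$ and $\widetilde V((t+j)/N)$ when their arguments exit $[0,1]$: the $\widetilde V_k$ are evaluated via their quasi-periodic extensions $\widetilde V_k(t+m)=\omega^{km}\widetilde V_k(t)$, while $\widetilde V\in\mathcal H_1$ is extended $1$-periodically (which is the same convention, corresponding to $k=0$). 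Once this bookkeeping is in place, the two inverse identities reduce to the character orthogonality relation, proving the lemma.
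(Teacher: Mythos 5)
Your proposal is correct and follows essentially the same route as the paper's proof: first verify the boundary conditions showing that $\Phi_N$ and $\Psi_N$ are well defined, then check the two compositions by substitution, interchange of finite sums, and the character orthogonality relation on $\Z/N\Z$. The DFT framing and the explicit attention to $H^1$ regularity and to the quasi-periodic extension convention $\widetilde V_k(t+m)=\omega^{km}\widetilde V_k(t)$ are implicit in the paper, but the underlying computation is the same.
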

\begin{proof}
A matter of straightforward calculations, based on the identity:
\[\sum_{j=0}^{N-1}\omega^{jk}=\begin{cases}N,&\text{if $k\equiv0\mod N$;}\\ 0,&\text{otherwise.}\end{cases}\]
First, one needs to prove that $\Phi_N$ is well defined, i.e., that the map $V_k$ in \eqref{eq:defVk}
belongs to $\mathcal H_{\omega^k}$:
\begin{multline*}
\widetilde V_k(1)=\frac1N\sum_{j=0}^{N-1}\omega^{-kj}\widetilde V\left(\tfrac{1+j}N\right)=
\frac1N\sum_{j=1}^{N}\omega^{-k(j-1)}\widetilde V\left(\tfrac{j}N\right)\\=
\omega^k\left[\frac1N\sum_{j=1}^N\omega^{-kj}\widetilde V\left(\tfrac jN\right)\right]=
\frac{\omega^k}N\left[\widetilde V(1)+\sum_{j=1}^{N-1}\omega^{-kj}\widetilde V\left(\tfrac jN\right)\right]\\
=\frac{\omega^k}N\left[\sum_{j=0}^{N-1}\omega^{-kj}\widetilde V\left(\tfrac jN\right)\right]=\omega^k\widetilde V_k(0),
\end{multline*}
i.e., $\widetilde V_k\in\mathcal H_{\omega^k}$. Similarly, $\Psi_N$ is well defined. Clearly, $\Phi_N$ and $\Psi_N$ are linear and bounded.

In order to check that $\Psi_N$ is a right inverse for $\Phi_N$, set $(\widetilde W_0,\ldots,\widetilde W_{N-1})=\Phi_N\circ\Psi_N(\widetilde V_0,\ldots,\widetilde V_{N-1})$.
Then, for all $k=0,\ldots,N-1$ and all $t\in[0,1]$:
\begin{multline*}
\widetilde W_k(t)=
\frac1N\sum_{j=0}^{N-1}\sum_{l=0}^{N-1}\omega^{-kj}\widetilde V_l(t+j)=
\frac1N\sum_{j=0}^{N-1}\sum_{l=0}^{N-1}\omega^{-kj}\omega^{lj}\widetilde V_l(t)\\=
\frac1N\sum_{l=0}^{N-1}\left(\sum_{j=0}^{N-1}\omega^{j(l-k)}\right)\widetilde V_l(t)=\sum_{l=0}^{N-1}\delta_{l,k}\widetilde V_l(t)=
\widetilde V_k(t),
\end{multline*}
i.e., $\Phi_N\circ\Psi_N$ is the identity of $\bigoplus\limits_{k=0}^{N-1}\mathcal H_{\omega^k}$.

Finally, to check that $\Psi_N$ is a left inverse for $\Phi_N$, set $\widetilde V=\Psi_N\circ\Phi_N(\widetilde W)$ and
compute:
\begin{multline*}
\widetilde V(t)=\frac1N\sum_{k=0}^{N-1}\sum_{j=0}^{N-1}\omega^{-kj}\widetilde W(t+j)\\=
\frac1N\sum_{k=0}^{N-1}\sum_{j=0}^{N-1}\omega^{-kj}\widetilde W(t)=\sum_{k=0}^{N-1}\delta_{k,0}\widetilde W(t)=\widetilde W(t),
\end{multline*}
for all $t\in[0,1]$. Thus, $\Psi_N\circ\Phi_N$ is the identity of $\mathcal H_1$, which concludes the proof.
\end{proof}
\begin{lem}\label{thm:lem2}
Given $\widetilde V,\widetilde W\in\mathcal H_1$, set: \[\Phi_N(\widetilde V)=(\widetilde V_0,\ldots,\widetilde V_{N-1}),\quad
\text{and}\quad \Phi_N(\widetilde W)=(\widetilde W_0,\ldots,\widetilde W_{N-1}),\] with $\widetilde V_k,\widetilde W_k\in\mathcal H_{\omega^k}$
for all $k=0,\ldots,N-1$. Then,  the following identities hold:
\[B_0(\widetilde V,\widetilde W)=N^2\sum_{k=0}^{N-1}B_0(\widetilde V_k,\widetilde W_k),\quad\text{and}\quad B_{N}(\widetilde V,\widetilde W)=N^2\sum_{k=0}^{N-1}B_1(\widetilde V_k,\widetilde W_k).\]
\end{lem}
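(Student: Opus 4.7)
\textbf{Plan for the proof of Lemma \ref{thm:lem2}.} The strategy is a direct substitution of the expression \eqref{eq:defV} for $\Psi_N$ into the formulas \eqref{eq:defBtRnCn} for $B_N$ and $B_0$, followed by a change of variables $s=Nr$, a decomposition of the resulting integral over $[0,N]$ into unit subintervals, and an application of the orthogonality relation $\sum_{j=0}^{N-1}\omega^{(k-l)j}=N\delta_{k,l}$ to kill off cross terms.

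More precisely, I would first recall that the extensions $\widetilde V_k(t+m)=\omega^{km}\widetilde V_k(t)$ make each $\widetilde V_k$ of class $H^1_{\mathrm{loc}}(\R,\C^n)$, so that by \eqref{eq:defV} one has $\widetilde V(r)=\sum_{k=0}^{N-1}\widetilde V_k(Nr)$ and, by the chain rule, $\widetilde V'(r)=N\sum_{k=0}^{N-1}\widetilde V_k'(Nr)$ for $r\in[0,1]$, and similarly for $\widetilde W$. Plugging these expressions into the definition of $B_N(\widetilde V,\widetilde W)$ from \eqref{eq:defBtRnCn} (with $t=N$), each of the five integrand terms acquires an overall factor of $N^2$ from the combinations $N\cdot N$ (two derivatives), $N\cdot N$ (one derivative and the explicit $N$ in $NG(\Gamma_{Nr}\cdot,\cdot)$), and $N^2$ (from the explicit $N^2$ in the last two terms).

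Next I would perform the change of variable $s=Nr$, which transforms $\int_0^1\,\mathrm dr$ into $\frac1N\int_0^N\,\mathrm ds$, converts $\Gamma_{Nr}$ and $\overline R_{Nr}$ into $\Gamma_s$ and $\overline R_s$, and leaves an overall prefactor of $N$ in front of $\int_0^N[\,\cdots\,]\,\mathrm ds$. I would then split $\int_0^N=\sum_{j=0}^{N-1}\int_j^{j+1}$ and substitute $s=j+u$ in each subintegral; by the $1$-periodicity of $\Gamma$ and $\overline R$ one has $\Gamma_{j+u}=\Gamma_u$ and $\overline R_{j+u}=\overline R_u$, while the extension rule gives $\widetilde V_k(j+u)=\omega^{kj}\widetilde V_k(u)$ and $\widetilde V_k'(j+u)=\omega^{kj}\widetilde V_k'(u)$, and similarly for $\widetilde W_l$. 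Since $G$ is Hermitian (sesquilinear on $\C^n$), every bilinear pairing of the $k$-th and $l$-th components then pulls out a phase $\omega^{kj}\overline{\omega^{lj}}=\omega^{(k-l)j}$, independent of the specific term.

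Summing the resulting geometric series, $\sum_{j=0}^{N-1}\omega^{(k-l)j}=N\delta_{k,l}$, so only the diagonal terms $k=l$ survive. Reassembling, the surviving sum is exactly $N\cdot N\cdot\sum_{k=0}^{N-1}\int_0^1[\,\cdots_{kk}\,]\,\mathrm du=N^2\sum_{k=0}^{N-1}B_1(\widetilde V_k,\widetilde W_k)$, which is the second identity. The first identity follows by the same computation restricted to the single derivative term of $B_0$ (equivalently, by setting formally $\Gamma\equiv0$, $\overline R\equiv0$, and noting that the prefactor $t^2$ disappears at $t=0$, so only the $N^2$ from the two derivatives remains). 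The computation is purely bookkeeping: the only real point to keep in mind is the antilinearity of the second slot of $G$ when extracting phases, without which the key cancellation $\sum_j\omega^{(k-l)j}=N\delta_{k,l}$ would not appear. Thus I do not expect any serious obstacle.
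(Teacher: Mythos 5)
Your proposal is correct and follows essentially the same route as the paper's proof: substitute the formula $\widetilde V(r)=\sum_k\widetilde V_k(Nr)$ into $B_N$, use the chain rule to extract the factor $N^2$, change variables $s=Nr$, decompose $\int_0^N$ into unit subintervals, use the $1$-periodicity of $\Gamma,\overline R$ together with the extension rule $\widetilde V_k(s+l)=\omega^{kl}\widetilde V_k(s)$ to pull out phases, and finally invoke $\sum_{j}\omega^{(k-l)j}=N\delta_{k,l}$ to kill the cross terms. You also correctly flag the one point the paper leaves implicit, namely that the conjugate-linearity of $G$ in its second slot is what turns the product of phases into $\omega^{(k-l)j}$ rather than $\omega^{(k+l)j}$; without it the orthogonality relation would not apply.
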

\begin{proof}
Again, a matter of direct calculations, as follows:
\begin{multline*}
B_0(\widetilde V,\widetilde W)=N^2\sum_{k,j}\int_0^1G\Big(\widetilde V_k'(rN),\widetilde W_j'(rN)\big)\,\mathrm dr\\=
N\sum_{k,j}\int_0^NG\big(\widetilde V_k'(s),\widetilde W_j'(s)\big)\mathrm ds=
N\sum_{k,j,l}\int_l^{l+1}G\big(\widetilde V_k'(s),\widetilde W_j'(s)\big)\mathrm ds\\=
N\sum_{k,j,l}\int_0^{1}G\big(\widetilde V_k'(s+l),\widetilde W_j'(s+l)\big)\mathrm ds=
N\sum_{k,j,l}\omega^{(k-j)l}\int_0^{1}G\big(\widetilde V_k'(s),\widetilde W_j'(s)\big)\mathrm ds\\=N^2
\sum_k\int_0^{1}G\big(\widetilde V_k'(s),\widetilde W_k'(s)\big)\mathrm ds=N^2\sum_kB_0(\widetilde V_k,\widetilde W_k).
\end{multline*}
Similarly,
\begin{multline*}
B_{N}(\widetilde V,\widetilde W)=N^2\int_0^1\sum_{k,j}\Big[G\big(\widetilde V_k'(rN),\widetilde W'_j(rN)\big) +
G\big(\Gamma_{Nr}\widetilde V_k(rN),\widetilde W'_k(rN)\big)\\+G\big(\widetilde V_k'(rN),\Gamma_{Nr}\widetilde W_j(rN)\big)
+G\big(\Gamma_{Nr}\widetilde V_k(rN),\Gamma_{Nr}\widetilde W_j(rN)\big)\\+G\big(\widetilde R_{Nr}\widetilde V_k(rN),\widetilde W_j(rN)\big)\Big]\,\mathrm dr
\\=N^2\sum_{k,j,l}\int_{\frac lN}^{\frac{l+1}N}\Big[G\big(\widetilde V_k'(rN),\widetilde W'_j(rN)\big) +
G\big(\Gamma_{Nr}\widetilde V_k(rN),\widetilde W'_j(rN)\big)\\+G\big(\widetilde V_k'(rN),\Gamma_{Nr}\widetilde W_j(rN)\big)
+G\big(\Gamma_{Nr}\widetilde V_k(rN),\Gamma_{Nr}\widetilde W_j(rN)\big)\\+G\big(\widetilde R_{Nr}\widetilde V_k(rN),\widetilde W_j(rN)\big)\Big]\,\mathrm dr
\\=N\sum_{k,j,l}\int_{l}^{l+1}\Big[G\big(\widetilde V_k'(s),\widetilde W'_j(s)\big) +
G\big(\Gamma_{s}\widetilde V_k(s),\widetilde W'_j(s)\big)\\+G\big(\widetilde V_k'(s),\Gamma_{s}\widetilde W_j(s)\big)
+G\big(\Gamma_{s}\widetilde V_k(s),\Gamma_{s}\widetilde W_j(s)\big)\\+G\big(\widetilde R_{s}\widetilde V_k(s),\widetilde W_j(s)\big)\Big]\,\mathrm ds
\\=N\sum_{k,j,l}\omega^{(k-j)l}\int_{0}^{1}\Big[G\big(\widetilde V_k'(s),\widetilde W'_j(s)\big) +
G\big(\Gamma_{s}\widetilde V_k(s),\widetilde W'_j(s)\big)\\+G\big(\widetilde V_k'(s),\Gamma_{s}\widetilde W_j(s)\big)
+G\big(\Gamma_{s}\widetilde V_k(s),\Gamma_{s}\widetilde W_j(s)\big)\\+G\big(\widetilde R_{s}\widetilde V_k(s),\widetilde W_j(s)\big)\Big]\,\mathrm ds
\\=N^2\sum_k\int_{0}^{1}\Big[G\big(\widetilde V_k'(s),\widetilde W'_k(s)\big) +
G\big(\Gamma_{s}\widetilde V_k(s),\widetilde W'_k(s)\big)\\+G\big(\widetilde V_k'(s),\Gamma_{s}\widetilde W_j(s)\big)
+G\big(\Gamma_{s}\widetilde V_k(s),\Gamma_{s}\widetilde W_k(s)\big)\\+G\big(\widetilde R_{s}\widetilde V_k(s),\widetilde W_k(s)\big)\Big]\,\mathrm ds\\
=N^2\sum_kB_1(\widetilde V_k,\widetilde W_k).\qedhere
\end{multline*}

\end{proof}
\begin{teo}[Fourier theorem]\label{thm:fourier}
\[\mathfrak{sf}\big(\gamma^{(N)}\big)=\sum_{k=0}^{N-1}\lambda_\gamma(\omega^k).\]
\end{teo}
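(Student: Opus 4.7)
The plan is to transfer the spectral flow computation via the isomorphism $\Phi_N$ of Lemma~\ref{thm:lem1} and exploit the fact, stressed in the introduction, that each bilinear form $B_t$ is a compact perturbation of the fixed symmetry $\mathfrak J:\widetilde V\mapsto G\widetilde V$ of $\mathcal H$; for paths of this type, the spectral flow depends only on the endpoints.

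By definition $\spfl(\gamma^{(N)})=\lambda_\gamma(1;N)$ equals the spectral flow of $[0,1]\ni t\mapsto B_{Nt}$ on $\mathcal H_1$. Applying the cogredience invariance of the spectral flow with the constant curve $M\equiv\Psi_N^*:\mathcal H_1\to\bigoplus_{k=0}^{N-1}\mathcal H_{\omega^k}$ (so that $MT_tM^*=\Psi_N^*T_t\Psi_N$ represents the pulled-back bilinear form $(u,v)\mapsto B_{Nt}(\Psi_N u,\Psi_N v)$) yields
\[\spfl(\gamma^{(N)})=\spfl\bigl([0,1]\ni t\mapsto \Psi_N^* B_{Nt}\Psi_N\bigr).\]
Lemma~\ref{thm:lem2} identifies the endpoints of the pulled-back path with $N^2\bigoplus_k B_0|_{\mathcal H_{\omega^k}}$ and $N^2\bigoplus_k B_1|_{\mathcal H_{\omega^k}}$, which are precisely the endpoints of the scaled direct-sum path $[0,1]\ni t\mapsto N^2\bigoplus_k B_t|_{\mathcal H_{\omega^k}}$. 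By additivity of the spectral flow under direct sums and its invariance under multiplication by a positive scalar, this direct-sum path has spectral flow $\sum_{k=0}^{N-1}\lambda_\gamma(\omega^k)$. Therefore everything reduces to showing that the pulled-back path and the scaled direct-sum path have the same spectral flow.

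For this I would show that the two paths lie in a common affine subspace of Fredholm operators and then apply homotopy invariance to the straight line between them. The pulled-back path lies in the affine subspace of compact perturbations of $A_L:=\Psi_N^*\mathfrak J\Psi_N$ (since $B_t-\mathfrak J$ is compact on $\mathcal H_1$ and $\Psi_N$ is an isomorphism), while the scaled direct-sum path lies in the affine subspace of compact perturbations of $A_R:=N^2\mathfrak J|_{\bigoplus_k\mathcal H_{\omega^k}}$. The two coincide provided $A_L-A_R$ is compact, which is the main technical point of the argument: applying Lemma~\ref{thm:lem2} at $t=0$ and using that $B_0-\mathfrak J$ equals the finite-rank boundary term $-G\bigl(\,\cdot\,(0),\,\cdot\,(0)\bigr)$, the common value $\Psi_N^*B_0\Psi_N=N^2\bigoplus_k B_0|_{\mathcal H_{\omega^k}}$ turns out to be simultaneously a compact perturbation of $A_L$ and of $A_R$, forcing $A_L-A_R$ to be compact. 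The straight-line homotopy then stays within the common affine subspace, every element of which is Fredholm (being a compact perturbation of the invertible self-adjoint operator $A_R=N^2\mathfrak J|_{\bigoplus}$), and homotopy invariance of the spectral flow completes the proof.
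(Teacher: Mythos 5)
Your proof is correct and follows essentially the paper's route: transfer via the isomorphism of Lemma~\ref{thm:lem1}, identification of endpoints via Lemma~\ref{thm:lem2}, the compact-perturbation-of-a-symmetry structure of the index forms, and direct-sum additivity of the spectral flow. The paper states the proof very tersely, and you supply one genuinely nontrivial verification it leaves implicit --- that the conjugated path and the scaled direct-sum path lie in a common affine class of compact perturbations of a single invertible self-adjoint operator --- via the clean observation that $A_L-A_R$ must be compact because both are compact perturbations of the shared endpoint at $t=0$.
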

\begin{proof}
This follows immediately from \eqref{eq:lambda1Nsf}, Lemmas~\ref{thm:lem1}, \ref{thm:lem2}, and the following two observations:
\begin{enumerate}
\item the spectral flow of a path of compact perturbations of a fixed symmetry only depends on the endpoints
of the path;
\item the spectral flow is additive by direct sums.\qedhere
\end{enumerate}
\end{proof}
\end{section}
\begin{section}{A superlinear estimate for the spectral flow of an iterate}
We will now use the Fourier theorem and formula \eqref{eq:rellambdagammalambdao} in order to establish estimates on the growth
of the spectral flow for the $N$-th iterate of a closed geodesic $\gamma$. We will use the notations in Subsection~\ref{sub:findimreduction};
an immediate application of Proposition~\ref{thm:formularidotta}, Theorem~\ref{thm:fourier} and Eq. \eqref{spectralflow0} gives the following:
\begin{prop}\label{thm:spflredfindim} Given a closed geodesic and an integer $N\ge1$:
\begin{equation}\label{eq:iterationspfl}
\spfl\big(\gamma^{(N)}\big)=-N\big( \iMaslov(\gamma)\big)-\mathrm n_-(g)+\sum_{k=0}^{N-1}\mathrm{dim}\big(\mathbb J_\gamma^{(1)}(\omega^k)\big)-
\sum_{k=0}^{N-1}\mathrm n_-(b_{\omega^k}),
\end{equation}
where $\omega=e^{2\pi i/N}$.\qed
\end{prop}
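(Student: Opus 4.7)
The proof is essentially a one-line verification, so I plan to spell out the arithmetic rather than fight any real difficulty. The idea is simply to chain together the three ingredients the paragraph itself cites.

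First, I will invoke the Fourier theorem (Theorem~\ref{thm:fourier}) to rewrite $\spfl(\gamma^{(N)})$ as $\sum_{k=0}^{N-1}\lambda_\gamma(\omega^k)$, where $\omega=e^{2\pi i/N}$. Next, I will apply the reduction formula~\eqref{eq:rellambdagammalambdao} of Proposition~\ref{thm:formularidotta} at each point $z=\omega^k$ to get
\[
\lambda_\gamma(\omega^k)=\lambda_\gamma^o+(1-\delta_{\omega^k,1})\,\mathrm n_-(g)-\mathrm n_0(\gamma)+\mathrm{dim}\bigl(\mathbb J_\gamma^{(1)}(\omega^k)\bigr)-\mathrm n_-(b_{\omega^k}),
\]
and then substitute the value $\lambda_\gamma^o=\mathrm n_0(\gamma)-\mathrm n_-(g)-\iMaslov(\gamma)$ from~\eqref{spectralflow0}. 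The two $\mathrm n_0(\gamma)$ terms cancel, as does one copy of $\mathrm n_-(g)$ against the $+\mathrm n_-(g)$, leaving
\[
\lambda_\gamma(\omega^k)=-\iMaslov(\gamma)-\delta_{\omega^k,1}\,\mathrm n_-(g)+\mathrm{dim}\bigl(\mathbb J_\gamma^{(1)}(\omega^k)\bigr)-\mathrm n_-(b_{\omega^k}).
\]

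Finally, I will sum this identity over $k=0,\ldots,N-1$. Among the $N$-th roots of unity exactly one (namely $\omega^0=1$) equals $1$, so $\sum_{k=0}^{N-1}\delta_{\omega^k,1}=1$; the $-\iMaslov(\gamma)$ term contributes $-N\iMaslov(\gamma)$, and the remaining two terms combine into the advertised finite sums. This yields formula~\eqref{eq:iterationspfl}.

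There is no genuine obstacle here: the only point deserving a second look is the bookkeeping of the Kronecker delta at $z=1$, which is precisely the mechanism by which the single $-\mathrm n_-(g)$ appears in the final formula instead of a $-N\,\mathrm n_-(g)$. Everything else is direct substitution, so the proof reduces to displaying the chain of equalities and pointing out the $\delta_{\omega^k,1}$ sum.
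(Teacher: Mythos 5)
Your proof is correct and follows exactly the route the paper itself indicates: apply the Fourier theorem, substitute the reduction formula~\eqref{eq:rellambdagammalambdao} at each $N$-th root of unity, plug in the value of $\lambda_\gamma^o$ from~\eqref{spectralflow0}, and sum, with the Kronecker delta contributing exactly once. The paper offers no further detail, so your spelled-out chain of equalities is precisely the intended argument.
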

Let us estimate the last two terms in formula \eqref{eq:iterationspfl}.
\begin{lem}\label{thm:unifboundeterm}
The quantity $\sum_{k=0}^{N-1}\mathrm{dim}\big(\mathbb J_\gamma^{(1)}(\omega^k)\big)$ is uniformly bounded:
\begin{equation}\label{eq:stimadimJ1}
0\le \sum_{k=0}^{N-1}\mathrm{dim}\big(\mathbb J_\gamma^{(1)}(\omega^k)\big)\le 2\,\mathrm{dim}(M).
\end{equation}
\end{lem}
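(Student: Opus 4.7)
The lower bound is trivial, so I focus on the upper bound. The strategy is to realize each $\mathbb J_\gamma^{(1)}(z)$ as a subspace of the $z$-eigenspace of the (complexified) linearized Poincar\'e map $\mathfrak P$ on $\C^{2n}$, and then to invoke linear independence of eigenspaces for distinct eigenvalues to bound the total dimension by $\Dim_\C\C^{2n}=2n$.

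First, I observe that any $\widetilde V\in\mathbb J_\gamma^{(1)}(z)$ is a Jacobi field with $\widetilde V(0)=0$, hence uniquely determined by its initial velocity $\widetilde V'(0)\in\C^n$. Tracing through the definition of $\mathfrak P$ recorded in Subsection~\ref{sub:basicdata}, which uses the shifted initial data $v'-\Gamma_0 v$, the vanishing conditions $\widetilde V(0)=\widetilde V(1)=0$ force the $\Gamma_0$-corrections in the formula $\mathfrak P(v,v')=(\widetilde V(1),\widetilde V'(1)+\Gamma_0\widetilde V(1))$ to drop out. The remaining requirement $\widetilde V'(1)=z\widetilde V'(0)$ then reads $\mathfrak P(0,\widetilde V'(0))=z\cdot(0,\widetilde V'(0))$. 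Conversely, given any $w\in\C^n$ with $(0,w)\in\Ker(\mathfrak P-z\cdot\mathrm{Id})$, the Jacobi field determined by $\widetilde V(0)=0$ and $\widetilde V'(0)=w$ automatically belongs to $\mathbb J_\gamma^{(1)}(z)$. This yields a canonical linear isomorphism
\[
\mathbb J_\gamma^{(1)}(z)\;\cong\;\bigl(\{0\}\oplus\C^n\bigr)\cap\Ker(\mathfrak P-z\cdot\mathrm{Id}).
\]

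Second, the $N$-th roots of unity $\omega^0,\ldots,\omega^{N-1}$ are pairwise distinct, so the corresponding eigenspaces $\Ker(\mathfrak P-\omega^k\mathrm{Id})\subseteq\C^{2n}$ are linearly independent as $k$ varies. Their direct sum therefore injects into $\C^{2n}$, and summing the isomorphism above over $k$ gives
\[
\sum_{k=0}^{N-1}\Dim\bigl(\mathbb J_\gamma^{(1)}(\omega^k)\bigr)\;\le\;\sum_{k=0}^{N-1}\Dim\bigl(\Ker(\mathfrak P-\omega^k\mathrm{Id})\bigr)\;\le\;2n,
\]
which is the required upper bound, with $2n=2\Dim(M)$.

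The only delicate point is the explicit identification with the $z$-eigenspace of $\mathfrak P$: one must carefully verify that the $\Gamma_0$-corrections built into the definition of $\mathfrak P$ really cancel, which is automatic once both $\widetilde V(0)=0$ and $\widetilde V(1)=0$ hold. With this identification in hand, the uniform bound is a purely formal linear-algebra consequence of the linear independence of eigenspaces.
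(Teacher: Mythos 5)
Your proposal is correct and takes essentially the same route as the paper: both identify $\mathbb J_\gamma^{(1)}(z)$ with a subspace of $\Ker(\mathfrak P-z\cdot\mathrm{Id})\subseteq\C^{2n}$ via the initial-data map $\widetilde V\mapsto(\widetilde V(0),\widetilde V'(0))$ and then bound the sum by $2\dim(M)$ using linear independence of eigenspaces for distinct eigenvalues. You are slightly more explicit about why the $\Gamma_0$-corrections in the definition of $\mathfrak P$ vanish and you also prove the reverse inclusion (that the correspondence is onto $(\{0\}\oplus\C^n)\cap\Ker(\mathfrak P-z\cdot\mathrm{Id})$), though only the injection is needed for the bound.
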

\begin{proof}
If we identify the space $S=\big\{\widetilde V\in C^2\big([0,1],\C^n\big):\widetilde V\ \text{is solution of \eqref{eq:JacobieqCn}}\big\}$
with  $\C^n\oplus\C^n$ via the map $\widetilde V\mapsto\big(\widetilde V(0),\widetilde V'(0)\big)$, then
for all $z\in\mathds S^1$, $\mathbb J^{(1)}_\gamma(z)$ is identified with a subspace of $\Ker(\mathfrak P_\gamma-z\cdot\mathrm I)$.
The conclusion follows from the fact that $\sum_{z\in\mathds C}\mathrm{dim}\big(\Ker(\mathfrak P_\gamma-z\cdot\mathrm I)\big)\le2\,\mathrm{dim}(M)$.
\end{proof}
A rough estimate for the term containing the index of the Hermitian forms $b_z$ is given in the following:
\begin{lem}\label{thm:crescitaBN}
For all $z\in\mathds S^1$,  $\mathrm n_-(b_z)\le2\,\mathrm{dim}(M)-\mathrm n_0(\gamma)$; if $z$  is not in the spectrum of $\mathfrak P_\gamma$,
then $\mathrm n_-(b_z)\le\mathrm{dim}(M)-\mathrm n_0(\gamma)$. It follows that
\begin{equation}\label{eq:stimasomman-b}
0\le\sum_{k=0}^{N-1}\mathrm n_-(b_{\omega^k})\le N\cdot\big[\mathrm{dim}(M)-\mathrm n_0(\gamma)\big]+4\,\mathrm{dim}(M)^2-2\,\mathrm n_0(\gamma)\,\mathrm{dim}(M).
\end{equation}
\end{lem}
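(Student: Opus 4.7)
The strategy is to bound $\mathrm n_-(b_z)$ pointwise by $\dim\bigl(\mathbb J_\gamma^{(2)}(z)\bigr)-\dim\bigl(\Ker(b_z)\bigr)$, producing the two inequalities of the first assertion, and then to sum over the $N$-th roots of unity, exploiting the fact that only finitely many of them can be eigenvalues of $\mathfrak P_\gamma$. For the first ingredient, I would reuse the identification $S\cong\C^n\oplus\C^n$ via $\widetilde V\mapsto\bigl(\widetilde V(0),\widetilde V'(0)\bigr)$ already employed in Lemma~\ref{thm:J2gammacont}: under this identification $\mathbb J_\gamma^{(2)}(z)$ is the kernel of the linear map $L_z=\pi_1\circ(\mathfrak P-z\cdot\mathrm I)\colon\C^n\oplus\C^n\to\C^n$. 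This immediately gives $\dim\bigl(\mathbb J_\gamma^{(2)}(z)\bigr)\le 2\,\dim(M)$ in general, and equals $\dim(M)$ when $z$ is not in the spectrum of $\mathfrak P_\gamma$, because then $\mathfrak P-z\cdot\mathrm I$ is an isomorphism and hence $L_z$ is surjective.

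For the second ingredient I would establish the inclusion $\mathcal J_\gamma^0\subseteq\Ker(b_z)$ for every $z\in\mathds S^1$. Any $\widetilde W\in\mathcal J_\gamma^0$ is a Jacobi solution with $\widetilde W(0)=\widetilde W(1)=0$, so trivially $\widetilde W(1)=z\,\widetilde W(0)$ and $\widetilde W\in\mathbb J_\gamma^{(2)}(z)$. From the explicit formula $b_z(\widetilde V,\widetilde W)=G\bigl(\bar z\widetilde V'(1)-\widetilde V'(0),\widetilde W(0)\bigr)$ and $\widetilde W(0)=0$ one obtains $b_z(\widetilde V,\widetilde W)=0$ for every $\widetilde V\in\mathbb J_\gamma^{(2)}(z)$; by the Hermitian symmetry of $b_z$ this shows $\widetilde W\in\Ker(b_z)$. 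Hence $\dim\bigl(\Ker(b_z)\bigr)\ge\mathrm n_0(\gamma)$, uniformly in $z$. Combining with the preceding dimension count yields the pointwise bounds $\mathrm n_-(b_z)\le 2\,\dim(M)-\mathrm n_0(\gamma)$ in general, and $\mathrm n_-(b_z)\le\dim(M)-\mathrm n_0(\gamma)$ when $z\notin\mathrm{spec}(\mathfrak P_\gamma)$.

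For the sum I would split the indices $k\in\{0,\dots,N-1\}$ according to whether $\omega^k$ is an eigenvalue of $\mathfrak P_\gamma$. Since $\mathfrak P_\gamma$ is an endomorphism of a $2\,\dim(M)$-dimensional space, it has at most $2\,\dim(M)$ eigenvalues, and the ``spectral'' indices contribute at most $2\,\dim(M)\cdot\bigl(2\,\dim(M)-\mathrm n_0(\gamma)\bigr)=4\,\dim(M)^2-2\,\mathrm n_0(\gamma)\dim(M)$; the remaining indices (at most $N$ in number) each contribute at most $\dim(M)-\mathrm n_0(\gamma)$, totalling at most $N\cdot\bigl(\dim(M)-\mathrm n_0(\gamma)\bigr)$. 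Adding these two estimates produces the right-hand side of \eqref{eq:stimasomman-b}, while the lower bound is trivial from $\mathrm n_-\ge 0$. The only delicate point is the kernel inclusion in step two, which hinges on combining the explicit formula for $b_z$ with its Hermitian symmetry; everything else is routine dimension counting together with the observation that $\mathfrak P_\gamma$ has finitely many eigenvalues.
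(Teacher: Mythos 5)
Your proof is correct and follows essentially the same route as the paper's. The paper bounds $\mathrm n_-(b_z)$ by $\dim\bigl(\mathbb J_\gamma^{(2)}(z)\bigr)-\dim\bigl(\Ker(b_z)\bigr)$ exactly as you do, obtaining the dimension count for $\mathbb J_\gamma^{(2)}(z)$ from the same identification $S\cong\C^n\oplus\C^n$ (referring to the proof of Corollary~\ref{thm:saltoinz=1}), and obtaining $\dim\bigl(\Ker(b_z)\bigr)\ge\mathrm n_0(\gamma)$ from the same observation that Jacobi solutions vanishing at both endpoints lie in $\Ker(b_z)$; the sum estimate is likewise obtained by splitting the roots of unity into eigenvalues and non-eigenvalues of $\mathfrak P_\gamma$ and using that $\mathfrak P_\gamma$ has at most $2\,\dim(M)$ eigenvalues. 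You merely spell out the elementary steps that the paper leaves implicit, and your remark about Hermitian symmetry is sound but inessential, since $b_z(\cdot,\widetilde W)\equiv 0$ already places $\widetilde W$ in the kernel directly.
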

\begin{proof}
Arguing as in the proof of Corollary~\ref{thm:saltoinz=1}, one proves easily that  $\mathrm{dim}\big(\mathbb J_\gamma^{(2)}(z)\big)\le2\,\mathrm{dim}(M)$,
and that $\dim\big(\Ker(b_z)\big)\ge\mathrm n_0(\gamma)$. This last inequality follows from the fact that the space
$\big\{\widetilde W\ \text{solution of \eqref{eq:JacobieqCn}}:\widetilde W(0)=\widetilde W(1)=0\big\}$ is always contained
in the kernel of $b_z$. This proves that $\mathrm n_-(b_z)\le2\,\mathrm{dim}(M)-\mathrm n_0(\gamma)$. When
$z\in\mathds S^1$ is not in the spectrum of $\mathfrak P_\gamma$, then it is shown in the proof of Corollary~\ref{thm:saltoinz=1}
that $\mathrm{dim}\big(\mathbb J_\gamma^{(2)}(z)\big)=\mathrm{dim}(M)$, which gives the improved inequality $\mathrm n_-(b_z)\le\mathrm{dim}(M)-\mathrm n_0(\gamma)$.
Inequality \eqref{eq:stimasomman-b} follows now easily, observing that there are at most $2\,\mathrm{dim}(M)$ eigenvalues of
$\mathfrak P_\gamma$ (on the unit circle).
\end{proof}
\begin{cor}\label{thm:iteratesineq} Set $C_\gamma=4\,\mathrm{dim}(M)^2-2\,\mathrm n_0(\gamma)\,\mathrm{dim}(M)+\mathrm n_-(g)$; then:
\begin{align}\label{eq:superlineargrowth}
\spfl\big(\gamma^{(N)}\big)&\ge\big[-\iMaslov(\gamma)+\mathrm n_0(\gamma)-\dim (M)\big]\cdot N-C_\gamma,\\
\label{eq:superlineargrowth2}
\spfl\big(\gamma^{(N)}\big)&\le-\iMaslov(\gamma)\cdot N-\mathrm n_-(g)+2\dim (M),
\end{align}
for all $N\ge 1$.
\end{cor}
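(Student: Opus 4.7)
The plan is to combine the iteration formula of Proposition~\ref{thm:spflredfindim} with the two bounds already established in Lemmas~\ref{thm:unifboundeterm} and \ref{thm:crescitaBN}; no further machinery is required. Writing, for brevity,
\[
\Sigma_1(N)=\sum_{k=0}^{N-1}\Dim\bigl(\mathbb J_\gamma^{(1)}(\omega^k)\bigr),\qquad
\Sigma_2(N)=\sum_{k=0}^{N-1}\mathrm n_-(b_{\omega^k}),
\]
formula \eqref{eq:iterationspfl} reads
\[
\spfl\bigl(\gamma^{(N)}\bigr)=-N\,\iMaslov(\gamma)-\mathrm n_-(g)+\Sigma_1(N)-\Sigma_2(N).
\]
Both summands $\Sigma_1(N)$ and $\Sigma_2(N)$ are non-negative, so each of the two inequalities is obtained by throwing one of them away and bounding the other.

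For the upper bound \eqref{eq:superlineargrowth2}, I discard the non-positive contribution $-\Sigma_2(N)\le 0$ and invoke the uniform bound $\Sigma_1(N)\le 2\Dim(M)$ from \eqref{eq:stimadimJ1}; this immediately yields
\[
\spfl\bigl(\gamma^{(N)}\bigr)\le -\iMaslov(\gamma)\cdot N-\mathrm n_-(g)+2\,\Dim(M),
\]
without any dependence on $N$ in the error term.

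For the lower bound \eqref{eq:superlineargrowth}, I discard the non-negative contribution $\Sigma_1(N)\ge 0$ and apply the estimate \eqref{eq:stimasomman-b} to $\Sigma_2(N)$. The leading part of that estimate, namely $N[\Dim(M)-\mathrm n_0(\gamma)]$, enters with a minus sign and therefore contributes $N[\mathrm n_0(\gamma)-\Dim(M)]$ to the linear term in $N$, which combines with $-N\,\iMaslov(\gamma)$ to produce the bracketed coefficient in \eqref{eq:superlineargrowth}. The remaining constant $4\Dim(M)^2-2\mathrm n_0(\gamma)\Dim(M)$ from \eqref{eq:stimasomman-b}, together with the standing $-\mathrm n_-(g)$ in the iteration formula, assembles into precisely $-C_\gamma$.

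There is no real obstacle: everything is bookkeeping once the formula of Proposition~\ref{thm:spflredfindim} and the two estimates of Lemmas~\ref{thm:unifboundeterm}, \ref{thm:crescitaBN} are in place. The only thing worth double-checking when writing out the short proof is the sign convention in \eqref{eq:iterationspfl} and the fact that the $+2\,\Dim(M)$ in \eqref{eq:superlineargrowth2} indeed comes from $\Sigma_1(N)\le 2\Dim(M)$ and not from the coarser $\mathrm n_0(\gamma)$-dependent bound, so that the upper estimate stays independent of $\mathrm n_0(\gamma)$.
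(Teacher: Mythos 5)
Your proof is correct and is precisely the bookkeeping the paper intends (the paper's own proof is the single line ``Follows easily from \eqref{eq:iterationspfl}, \eqref{eq:stimadimJ1} and \eqref{eq:stimasomman-b}''). You have filled in the same steps: drop the nonnegative $\Sigma_1$ and bound $\Sigma_2$ by \eqref{eq:stimasomman-b} for the lower estimate, drop the nonpositive $-\Sigma_2$ and bound $\Sigma_1$ by $2\dim(M)$ for the upper estimate.
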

\begin{proof}
Follows easily from \eqref{eq:iterationspfl}, \eqref{eq:stimadimJ1} and \eqref{eq:stimasomman-b}.
\end{proof}
Inequality \eqref{eq:superlineargrowth} becomes interesting when $\iMaslov(\gamma)<\mathrm n_0(\gamma)-\dim (M)$, while \eqref{eq:superlineargrowth2} 
when $\iMaslov(\gamma)>0$. Thus, the question is understanding the asymptotic behavior of $\spfl(\gamma^N)$  when
\begin{equation}\label{eq:boundsmaslov}
\mathrm n_0(\gamma)-\dim (M)\le\iMaslov(\gamma)\le 0;
\end{equation}
note that $\mathrm n_0(\gamma)-\dim (M)\le-1$, and that $C_\gamma$ is bounded uniformly on $\gamma$:
\[2\,\mathrm{dim}(M)^2+2\,\mathrm{dim}(M)+\mathrm n_-(g)\le C_\gamma\le 4\,\mathrm{dim}(M)^2+\mathrm n_-(g).\]
By \eqref{eq:formulaspectralflowperiodic}, \eqref{eq:boundsmaslov} is equivalent to:
\begin{align}\label{eq:boundsspectralflow}
\spfl(\gamma)&\ge\Dim\big(\mathcal  J_\gamma^{\text{per}}\cap\mathcal J_\gamma^0\big)-\mathrm i_{\text{conc}}(\gamma)-\mathrm n_-(g)\\ \spfl(\gamma)&\le
\Dim\big(\mathcal J_\gamma^{\text{per}}\cap\mathcal J_\gamma^0\big)-\mathrm i_{\text{conc}}(\gamma)-\mathrm n_-(g)+\Dim(M)-\mathrm n_0(\gamma)\nonumber.
\end{align}
\begin{lem}\label{thm:subsequence}
   If for some $k\ge1$, $\vert\spfl(\gamma^{(k)})\vert>2\,\Dim(M)+\mathrm n_-(g)$,  then  the sequence $N\mapsto\vert\spfl(\gamma^{(kN)})\vert$ 
   has superlinear growth.
\end{lem}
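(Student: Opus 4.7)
The plan is to prove the contrapositive: if $\iMaslov(\eta)\in[\mathrm n_0(\eta)-\Dim(M),0]$ with $\eta=\gamma^{(k)}$, then $|\spfl(\eta)|\le 2\,\Dim(M)+\mathrm n_-(g)$. Under the hypothesis of the lemma, $\iMaslov(\eta)$ is then forced outside the ``difficult range'' \eqref{eq:boundsmaslov}, and the conclusion follows from the linear estimates in Corollary~\ref{thm:iteratesineq} applied to the geodesic $\eta$.

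First, I would derive the a priori bound $|\spfl(\eta)|\le 2\,\Dim(M)+\mathrm n_-(g)$ in the range \eqref{eq:boundsmaslov} directly from \eqref{eq:formulaspectralflowperiodic}. The upper estimate $\spfl(\eta)\le\Dim(M)-\mathrm n_-(g)$ is immediate from $\Dim\bigl(\mathcal J_\eta^{\text{per}}\cap\mathcal J_\eta^0\bigr)\le\mathrm n_0(\eta)$, $-\iMaslov(\eta)\le\Dim(M)-\mathrm n_0(\eta)$, and $\mathrm i_{\text{conc}}(\eta)\ge 0$. The matching lower estimate $\spfl(\eta)\ge-2\,\Dim(M)-\mathrm n_-(g)$ requires the bound $\mathrm i_{\text{conc}}(\eta)\le 2\,\Dim(M)-\mathrm n_0(\eta)$. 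This latter bound follows from the trivial inequality $\Dim(\mathcal J_\eta^\star)\le 2\,\Dim(M)$ combined with the observation that $\mathcal J_\eta^0\subset\mathcal J_\eta^\star$ lies in the kernel of the symmetric form $I_\eta|_{\mathcal J_\eta^\star\times\mathcal J_\eta^\star}$ defining $\mathrm i_{\text{conc}}(\eta)$; indeed, $I_\eta(J_1,J_2)=g\bigl(\Ddt J_1(1)-\Ddt J_1(0),J_2(0)\bigr)$ vanishes when $J_2\in\mathcal J_\eta^0$, and symmetry of $I_\eta$ gives the same conclusion when $J_1\in\mathcal J_\eta^0$.

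Once the contrapositive is established, the hypothesis $|\spfl(\gamma^{(k)})|>2\,\Dim(M)+\mathrm n_-(g)$ forces the integer $\iMaslov(\eta)$ to satisfy either $\iMaslov(\eta)\ge 1$ or $\iMaslov(\eta)\le\mathrm n_0(\eta)-\Dim(M)-1$. In the first case, \eqref{eq:superlineargrowth2} applied to $\eta$ yields
\[\spfl(\gamma^{(kN)})=\spfl(\eta^{(N)})\le -\iMaslov(\eta)\cdot N+2\,\Dim(M)-\mathrm n_-(g)\le -N+2\,\Dim(M)-\mathrm n_-(g),\]
so $\spfl(\gamma^{(kN)})\to-\infty$ linearly in $N$. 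In the second, \eqref{eq:superlineargrowth} applied to $\eta$ yields
\[\spfl(\gamma^{(kN)})\ge\bigl[-\iMaslov(\eta)+\mathrm n_0(\eta)-\Dim(M)\bigr]N-C_\eta\ge N-C_\eta,\]
so $\spfl(\gamma^{(kN)})\to+\infty$ linearly in $N$. In either case, $|\spfl(\gamma^{(kN)})|$ has at least linear growth, which is what the authors call ``superlinear'' growth in this context.

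The main (and essentially only) subtle point is the a priori estimate $\mathrm i_{\text{conc}}(\eta)\le 2\,\Dim(M)-\mathrm n_0(\eta)$, as this is precisely what produces the precise threshold $2\,\Dim(M)+\mathrm n_-(g)$ in the statement of the lemma; the remainder of the argument is a routine invocation of the iteration inequalities already collected in Corollary~\ref{thm:iteratesineq}.
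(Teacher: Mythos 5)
Your proposal is correct and follows essentially the same route as the paper: show that the hypothesis forces $\iMaslov(\gamma^{(k)})$ outside the range \eqref{eq:boundsmaslov} (equivalently, that \eqref{eq:boundsspectralflow} fails for $\gamma^{(k)}$) and then invoke Corollary~\ref{thm:iteratesineq}. One small inaccuracy: your sharper bound $\mathrm i_{\text{conc}}(\eta)\le 2\,\Dim(M)-\mathrm n_0(\eta)$ is correct and nicely argued, but it is not actually \emph{required}---the paper's cruder $\mathrm i_{\text{conc}}(\eta)\le\Dim\bigl(\mathcal J_\eta^\star\bigr)\le 2\,\Dim(M)$ already yields the lower estimate $\spfl(\eta)\ge-2\,\Dim(M)-\mathrm n_-(g)$.
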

\begin{proof}
The inequality $\vert\spfl(\gamma^{(k)})\vert>2\,\Dim(M)+\mathrm n_-(g)$ implies that \eqref{eq:boundsspectralflow} is
not satisfied by the iterate $\gamma^{(k)}$; this follows easily considering the trivial inequalities:
\[\Dim\big(\mathcal J_{\gamma^{(k)}}^{\text{per}}\cap\mathcal J_{\gamma^{(k)}}^0\big)\le\Dim\big(\mathcal J_{\gamma^{(k)}}^0\big)\le\Dim(M)-1\]
and
\[\mathrm i_{\text{conc}}\big(\gamma^{(k)}\big)\le\Dim\big(\mathcal J_{\gamma^{(k)}}^\star\big)\le\Dim\big(\mathcal J_{\gamma^{(k)}}\big)=2\,\Dim(M).\]
By Corollary~\ref{thm:iteratesineq}, $N\mapsto\vert\spfl(\gamma^{(kN)})\vert$ has superlinear growth.
\end{proof}
The result of Lemma~\ref{thm:subsequence} is not yet satisfactory; we want to prove that if $\spfl(\gamma^{(N)})$ is not bounded,
then the entire sequence $N\mapsto\vert\spfl(\gamma^{(N)})\vert$ has superlinear growth.
Let us study more precisely the behavior of $\spfl\big(\gamma^{(N)}\big)$ as $N\to\infty$:
\begin{prop}\label{thm:esistenzalimitelambda}
The limit:
\begin{equation}\label{eq;defLgamma}
L_\gamma=\lim_{N\to\infty}\tfrac1N\,\spfl\big(\gamma^{(N)}\big)
\end{equation}
exists, and it is finite.
\end{prop}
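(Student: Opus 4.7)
The plan is to use the iteration formula of Proposition~\ref{thm:spflredfindim} and analyze the four summands after dividing by $N$. The first two terms are easy: $-N\iMaslov(\gamma)/N = -\iMaslov(\gamma)$ is constant, and $-\mathrm n_-(g)/N \to 0$. By Lemma~\ref{thm:unifboundeterm}, the third sum $\sum_{k=0}^{N-1}\dim\bigl(\mathbb J_\gamma^{(1)}(\omega^k)\bigr)$ is bounded uniformly in $N$ by $2\,\dim(M)$, so divided by $N$ it also tends to zero. Consequently, the existence and finiteness of $L_\gamma$ reduces to showing that
\[
\frac1N\sum_{k=0}^{N-1}\mathrm n_-(b_{\omega^k}), \qquad \omega = e^{2\pi i/N},
\]
converges as $N\to\infty$.

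The key is to observe that the function $h:\mathds S^1\to\N$ defined by $h(z)=\mathrm n_-(b_z)$ is bounded and piecewise constant with finitely many jumps. Boundedness (by $2\,\dim(M)$) is given by Lemma~\ref{thm:crescitaBN}. Piecewise constancy comes from the proof of Corollary~\ref{thm:saltoinz=1}: away from the spectrum of $\mathfrak P_\gamma$, we have $\dim\bigl(\mathbb J_\gamma^{(1)}(z)\bigr)=0$ and $\Ker(b_z)$ has constant dimension $\mathrm n_0(\gamma)$, and the quantity $\dim(\mathbb J_\gamma^{(1)}(z))-\mathrm n_-(b_z)$ was shown there to be constant on each arc of $\mathds S^1\setminus\mathrm{spec}(\mathfrak P_\gamma)$. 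Since $\mathfrak P_\gamma$ has at most $2\,\dim(M)$ eigenvalues on $\mathds S^1$, the function $h$ has at most $2\,\dim(M)$ discontinuities on $\mathds S^1$.

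Once $h$ is known to be piecewise constant, the Riemann-sum limit is routine. Enumerate the constancy arcs as $A_1,\dots,A_m$ with $h\equiv c_j$ on $A_j$; then
\[
\frac1N\sum_{k=0}^{N-1} h(\omega^k) \;=\; \sum_{j=1}^m c_j\,\frac{\#\{0\le k<N:\omega^k\in A_j\}}{N} + O(1/N),
\]
the error term absorbing the at most $2\dim(M)$ discontinuity points. Because the $N$-th roots of unity are equidistributed on $\mathds S^1$, the ratio $\#\{k:\omega^k\in A_j\}/N$ converges to $\mathrm{length}(A_j)/(2\pi)$, and the Riemann sum converges to $\frac1{2\pi}\int_0^{2\pi}h(e^{i\theta})\,d\theta$. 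Putting the pieces back together yields
\[
L_\gamma \;=\; -\iMaslov(\gamma) \;-\; \frac1{2\pi}\int_0^{2\pi}\mathrm n_-(b_{e^{i\theta}})\,d\theta,
\]
which is finite since the integrand is bounded.

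There is really no hard obstacle: everything reduces to the already established piecewise-constancy of $\lambda_\gamma$ (and hence of $h$) together with elementary equidistribution of roots of unity. The only point requiring a moment of care is handling the finitely many discontinuity points of $h$ (in particular $z=1$, where the Riemann integrand may have an isolated jump determined by $\mathrm n_-(g)$), but since each contributes at most $O(1/N)$ to the sum, they are harmless in the limit.
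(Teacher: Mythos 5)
Your proposal is correct and follows essentially the same route as the paper: reduce via Proposition~\ref{thm:spflredfindim}, observe that the remaining nontrivial term $\frac1N\sum_k\mathrm n_-(b_{\omega^k})$ is a Riemann sum of the bounded, piecewise-constant function $z\mapsto\mathrm n_-(b_z)$ (piecewise constancy following from Lemma~\ref{thm:J2gammacont} together with the constancy of $\Ker(b_z)$ established in the proof of Corollary~\ref{thm:saltoinz=1}), and conclude that the limit equals $\frac1{2\pi}\int_{\mathds S^1}\mathrm n_-(b_{e^{i\theta}})\,\mathrm d\theta$.
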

\begin{proof}
Using \eqref{eq:iterationspfl}, it suffices to show that the limit:
\[\lim_{N\to\infty}\frac1N\sum_{k=0}^{N-1}\mathrm n_-\big(b_{e^{2\pi ik/N}}\big)\]
exists and is finite.
By Lemma~\ref{thm:J2gammacont}, the map $\mathds S^1\ni z\mapsto\mathrm n_-(b_z)\in\N$ is constant on every connected component
of $\mathds S^1$ that does not contain elements in the spectrum of $\mathfrak P_\gamma$; thus, this function is Riemann
integrable on $\mathds S^1$, and:
\begin{equation}\label{eq:limita1NBN}
0\le\lim_{N\to\infty}\frac1N\sum_{k=0}^{N-1}\mathrm n_-\big(b_{e^{2\pi ik/N}}\big)=
\frac1{2\pi}\int_{\mathds S^1}\mathrm n_-(b_{e^{i\theta}})\,\mathrm d\theta<+\infty.\qedhere
\end{equation}
\end{proof}
Clearly, the following inequality holds:
\[-\iMaslov(\gamma)+\mathrm n_0(\gamma)-\dim (M)\le L_\gamma\le -\iMaslov(\gamma); \]
moreover, in the second inequality, the equality holds if and only if $b_z$ is positive semi-definite at each point of $\mathds S^1$ that does not belong
to the spectrum of $\mathfrak P_\gamma$. With this, we can finally prove the following:
\begin{prop}\label{thm:lineargrowth}
The sequence $\vert\spfl\big(\gamma^{(N)}\big)\vert$ is either bounded or it has superlinear growth.
\end{prop}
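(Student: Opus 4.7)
The plan is to refine the asymptotic result of Proposition~\ref{thm:esistenzalimitelambda} into a quantitative expansion $\spfl(\gamma^{(N)}) = L_\gamma N + O(1)$; once this is established, the dichotomy between boundedness and linear growth is immediate. Starting from the iteration formula \eqref{eq:iterationspfl}, I would write
\[
\spfl\bigl(\gamma^{(N)}\bigr) = -N\,\iMaslov(\gamma) - \mathrm n_-(g) + A_N - B_N,
\]
where $A_N = \sum_{k=0}^{N-1}\mathrm{dim}\bigl(\mathbb J_\gamma^{(1)}(\omega^k)\bigr)$ and $B_N = \sum_{k=0}^{N-1}\mathrm n_-(b_{\omega^k})$. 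By Lemma~\ref{thm:unifboundeterm}, $A_N$ is uniformly bounded in $N$, so the whole game reduces to showing that $B_N$ differs from $NI$ by a bounded quantity, where $I = \tfrac1{2\pi}\int_{\mathds S^1}\mathrm n_-(b_{e^{i\theta}})\,\mathrm d\theta$ is the integral in \eqref{eq:limita1NBN}.

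The key observation is that the integrand $f(\theta) = \mathrm n_-(b_{e^{i\theta}})$ is not merely Riemann integrable but in fact piecewise constant on $\mathds S^1$: by Lemma~\ref{thm:J2gammacont} and the argument used in the proof of Corollary~\ref{thm:saltoinz=1}, the function $z\mapsto\mathrm n_-(b_z)$ is locally constant on the complement of the (finite) spectrum of $\mathfrak P_\gamma$ on $\mathds S^1$, and Lemma~\ref{thm:crescitaBN} bounds it by $2\,\mathrm{dim}(M)$. Writing $f = \sum_{j=1}^m c_j\,\chi_{I_j}$ as a finite combination of constants times characteristic functions of arcs, one obtains
\[
B_N = \sum_{j=1}^m c_j\,\#\bigl\{k\in\{0,\dots,N-1\}:\omega^k\in I_j\bigr\},
\]
and since $\bigl|\#\{k:\omega^k\in I_j\} - N|I_j|/(2\pi)\bigr| \le 2$ for each arc, an elementary computation yields $|B_N - NI|\le 4m\,\mathrm{dim}(M)$, a constant depending only on $\gamma$.

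Assembling these estimates gives $\spfl(\gamma^{(N)}) = L_\gamma N + O(1)$, where $L_\gamma = -\iMaslov(\gamma) - I$ is precisely the limit of Proposition~\ref{thm:esistenzalimitelambda}. The dichotomy then drops out: if $L_\gamma\ne 0$, for $N$ large enough $|\spfl(\gamma^{(N)})|\ge \tfrac12|L_\gamma|\,N$, giving the required superlinear growth; while if $L_\gamma = 0$, the sequence $\spfl(\gamma^{(N)})$ is bounded by the $O(1)$ remainder. The hard part, insofar as there is one, is the uniform (in $N$) bound on the Riemann-sum discrepancy for the piecewise constant integrand $\mathrm n_-(b_z)$; this is an essentially elementary fact about equidistribution of roots of unity in arcs, but it is the quantitative refinement that upgrades the mere existence of $L_\gamma$ in Proposition~\ref{thm:esistenzalimitelambda} to an $O(1)$ error term, which is exactly what closes the argument.
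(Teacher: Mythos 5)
Your proof is correct, but it takes a genuinely different route from the paper's. The paper establishes Proposition~\ref{thm:lineargrowth} by an indirect contradiction argument: it first notes that the claim is equivalent to ``$L_\gamma=0$ if and only if the sequence is bounded'', takes for granted the trivial direction, and for the converse invokes Lemma~\ref{thm:subsequence} (which rests on the crude a priori bounds of Corollary~\ref{thm:iteratesineq}) to find an iterate $\gamma^{(k)}$ whose subsequence grows linearly, contradicting $L_\gamma=0$. You instead prove outright the quantitative expansion $\spfl(\gamma^{(N)})=L_\gamma N+O(1)$ by a direct Riemann-sum discrepancy estimate for the piecewise constant integrand $\mathrm n_-(b_{e^{i\theta}})$ over the $N$-th roots of unity. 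This is in substance the content of the paper's \emph{later} results, Proposition~\ref{thm:cresictaBN} and Corollary~\ref{thm:boundedorlinear}: your arc-counting bound $\bigl|\#\{k:\omega^k\in I_j\}-N|I_j|/(2\pi)\bigr|\le 2$ is essentially the same elementary estimate the paper uses there, organized slightly differently (the paper states it in incremental form, controlling $\mathcal B_{N+P}-\mathcal B_N$, whereas you bound $|B_N-NI|$ directly). So your argument is correct and in fact yields the stronger uniform version for free; the trade-off is that the paper deliberately separates the soft dichotomy from the sharp quantitative statement, while you prove them together. One minor point worth flagging for completeness: the decomposition $f=\sum_j c_j\chi_{I_j}$ on the open arcs ignores the (finitely many) eigenvalue points themselves, at which roots of unity may land; their total contribution to $B_N$ is bounded by $4\dim(M)^2$ uniformly in $N$, so the $O(1)$ conclusion is unaffected, but the constant should be adjusted accordingly.
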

\begin{proof}
The thesis is equivalent to proving that $L_\gamma=0$ if and only if the sequence $\spfl\big(\gamma^{(N)}\big)$ is bounded. The ``if'' part is trivial.
Now,  assume by contradiction that $L_\gamma=0$ and that the sequence $\spfl\big(\gamma^{(N)}\big)$ is unbounded. By Lemma \ref{thm:subsequence},
 there exists $k\ge1$ such that the subsequence $N\mapsto\vert\spfl\big(\gamma^{(kN)}\big)\vert$ has superlinear growth, i.e.,
 $\lim\limits_{N\to\infty}\frac1N{\spfl\big(\gamma^{(kN)}\big)}=k L_\gamma\not=0$, which is a contradiction.
\end{proof}

Our goal will now be to prove the occurrence of a \emph{uniform} superlinear growth for the spectral flow of an iterate.
To this aim, we need to study the sequence:
\[\mathcal B_N=\sum_{k=0}^{N-1}\mathrm n_-\big(b_{e^{2\pi i k/N}}\big),\]
which is in a sense, the non trivial part in formula \eqref{eq:iterationspfl}.
A substantial improvement to the result of Lemma~\ref{thm:crescitaBN} can be obtained as follows.
\begin{prop}\label{thm:cresictaBN}
The limit \begin{equation}\label{eq:defKgamma}
K_\gamma=\lim\limits_{N\to\infty}\frac1N\mathcal B_N
\end{equation} 
exists, and it is a nonnegative real number.
This number is zero if and only if $\mathcal B_N$ is bounded, which occurs if and only if
$\mathrm n_-(b_z)$ vanishes almost everywhere on $\mathds S^1$.
If $\mathcal B_N$ is not bounded, then its superlinear growth is uniform in the following sense:
there exist a constant $\alpha\in\R$,  such that for all
$N,P\in\N$:
\begin{equation}\label{firstineq}
K_\gamma P-\alpha\le \mathcal B_{N+P}-\mathcal B_N\le K_\gamma P+\alpha,
\end{equation}
\end{prop}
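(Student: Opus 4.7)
The plan is to exploit the fact that the function $z\mapsto\mathrm n_-(b_z)$ is a piecewise constant, $\N$-valued function on $\mathds S^1$. Indeed, by Lemma~\ref{thm:J2gammacont}, the family $z\mapsto\mathbb J_\gamma^{(2)}(z)$ varies continuously on the complement of the finite set of eigenvalues of $\mathfrak P_\gamma$ lying on $\mathds S^1$; since $\mathrm n_-(b_z)$ takes integer values, it is locally constant there, hence piecewise constant on $\mathds S^1$ with only finitely many jump points. Thus $\tfrac1N\mathcal B_N$ is a genuine Riemann sum for the step function $\theta\mapsto\mathrm n_-(b_{e^{i\theta}})$, and, exactly as in \eqref{eq:limita1NBN}, the limit $K_\gamma$ in \eqref{eq:defKgamma} exists and equals
\[
K_\gamma=\frac1{2\pi}\int_{\mathds S^1}\mathrm n_-(b_{e^{i\theta}})\,\mathrm d\theta\ge0.
\]

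For the three-way equivalence, note first that since the integrand is nonnegative and piecewise constant, the integral vanishes if and only if $\mathrm n_-(b_z)=0$ on the complement of a finite set, i.e., almost everywhere on $\mathds S^1$. Under this condition, $\mathcal B_N$ receives nonzero contributions only at those $k$ for which $e^{2\pi ik/N}$ coincides with one of the finitely many jump points; each such term is bounded by $2\dim(M)$ by Lemma~\ref{thm:crescitaBN}, so $\mathcal B_N$ is uniformly bounded. The converse, that $\mathcal B_N$ bounded implies $K_\gamma=\lim\tfrac1N\mathcal B_N=0$, is immediate.

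The main technical step is the uniform estimate \eqref{firstineq}. Let $z_1,\ldots,z_m$ be the jump points of $\mathrm n_-(b_z)$ on $\mathds S^1$ (with $m\le2\dim M$), let $A_1,\ldots,A_m$ be the corresponding open arcs on which $\mathrm n_-(b_z)\equiv v_i$, and set $V=\max_i v_i\le 2\dim(M)$. A direct count shows that the number $N_i(N)$ of $N$-th roots of unity lying in $A_i$ satisfies $\bigl|N_i(N)-\tfrac{N}{2\pi}|A_i|\bigr|\le 2$. Splitting $\mathcal B_N$ into the contributions from the arcs (total $\sum_iv_iN_i(N)$) and from the at most $m$ jump points which can coincide with an $N$-th root (total bounded by $mV$), one obtains a uniform bound
\[
\bigl|\mathcal B_N-NK_\gamma\bigr|\le 3mV=:\beta,
\]
valid for every $N$. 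Subtracting this estimate for $\mathcal B_N$ from the analogous one for $\mathcal B_{N+P}$ yields \eqref{firstineq} with $\alpha=2\beta$.

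The main obstacle is producing the uniform $O(1)$ error bound between the Riemann sum $\mathcal B_N$ and $NK_\gamma$, rather than the generic $o(1)$ error for continuous integrands. This sharpening is available here precisely because the integrand is a step function with a uniformly bounded number of steps and uniformly bounded value, so the question reduces to the elementary combinatorial task of counting $N$-th roots of unity in each arc.
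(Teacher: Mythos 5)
Your proof is correct and takes essentially the same approach as the paper: both recognize $z\mapsto\mathrm n_-(b_z)$ as a bounded step function with finitely many jumps, identify $\tfrac1N\mathcal B_N$ with a Riemann sum to obtain $K_\gamma=\tfrac1{2\pi}\int_{\mathds S^1}\mathrm n_-(b_{e^{i\theta}})\,\mathrm d\theta$, and reduce \eqref{firstineq} to counting $N$-th roots of unity in the finitely many arcs determined by the eigenvalues of $\mathfrak P_\gamma$. Your bookkeeping is arguably cleaner (you establish the uniform bound $|\mathcal B_N-NK_\gamma|\le\beta$ once and deduce \eqref{firstineq} by the triangle inequality, whereas the paper directly estimates $\mathcal B_{N+P}-\mathcal B_N$ via floor-function bounds $C_{N,j}$ on the root counts), but the underlying combinatorics are the same; one small caution is that the jump-point contributions should be bounded by $m\cdot\max_{z\in\mathds S^1}\mathrm n_-(b_z)$, which by Lemma~\ref{thm:crescitaBN} is $\le m\big(2\dim M-\mathrm n_0(\gamma)\big)$, rather than by $mV$ with $V=\max_i v_i$ (the value at a jump point need not equal any of the neighboring arc values), though this only changes the constant $\beta$ and not the argument.
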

\begin{proof}
The existence of the limit has already been established in the proof of Proposition~\ref{thm:esistenzalimitelambda}.
From the equality in \eqref{eq:limita1NBN} one obtains easily that the limit is zero if and only if  $\mathrm n_-(b_z)$ vanishes almost everywhere on $\mathds S^1$, and as it has always a finite number of discontinuities (the eigenvalues of $\mathfrak P_\gamma$),
one deduces that this occurs precisely when $\mathcal B_N$ is bounded. As to the last statement, assume that
$e^{i\theta_1},\ldots,e^{i\theta_k}$ are all the eigenvalues of $\mathfrak P_\gamma$ in $\mathds S^1\setminus\{1\}$,
with $0<\theta_1<\ldots<\theta_k<2\pi$; set $\theta_0=0$ and $\theta_{k+1}=2\pi$. For $j=0,\ldots,k$, define
$d_j$ as \[d_j=\lim\limits_{\theta\to0^+}\mathrm n_-\big(b_{e^{i(\theta_j+\theta)}}\big)\ge0;\]
recalling (a) in the proof of Corollary~\ref{thm:saltoinz=1}, $d_j$ is the constant value of the map
$\mathrm n_-(b_z)$ in the arc $\mathcal A_j=\big\{e^{i\theta}:\theta\in\left]\theta_j,\theta_{j+1}\right[\big\}$.
With these notations, we have:
\begin{equation}\label{eq:Kgammasomma}
K_\gamma=\frac1{2\pi}\sum_{j=0}^k {d_{j}(\theta_{j+1}-\theta_{j})};
\end{equation}
by Lemma~\ref{thm:crescitaBN}:
\[d_j\le 2\,\mathrm{dim}(M)-\mathrm n_0(\gamma).\]
By the first part of the proof, the assumption that $\mathcal B_N$ is unbounded is equivalent to the fact that
at least one of the term in the sum \eqref{eq:Kgammasomma} is positive (i.e., $K_\gamma>0$).
Finally, define constants $a_N$ and $C_{N,j}$, for $N\ge1$ and $j\in\{0,1,\ldots,k\}$ by:
\[\begin{aligned}
& a_N=\text{cardinality of}\,\big\{j:N\theta_j\equiv0\mod2\pi\big\},\\
& C_{N,j}=\left\lfloor\frac{N(\theta_{j+1}-\theta_j)}{2\pi}\right\rfloor,
\end{aligned}\]
where $\lfloor\cdot\rfloor$ denotes the integer part function: $\lfloor x\rfloor=\max\{m\in\Z:m\le x\}$.
Clearly, $0\le a_N\le k+1$; moreover, for all $N$ and $j$, the arc $\mathcal A_j$ contains a number of $N$-th roots of unity which is
at most $C_{N,j}+1$ and at least $C_{N,j}-1$. With this in mind, we proceed to the final calculation
giving the desired uniform superlinear growth, as follows:
\begin{multline*} 
\mathcal B_{N+P}-\mathcal B_N=\sum_{l=0}^{N+P-1}\mathrm n_-\big(b_{e^{2\pi i l/(N+P)}}\big)-\sum_{l=0}^{N-1}\mathrm n_-\big(b_{e^{2\pi i l/N}}\big)
\\=\sum_{l=1}^{N+P-1}\mathrm n_-\big(b_{e^{2\pi i l/(N+P)}}\big)-\sum_{l=1}^{N-1}\mathrm n_-\big(b_{e^{2\pi i l/N}}\big)\\\ge
\sum_{j=0}^kd_j(C_{N+P,j}-1)-\sum_{j=0}^kd_j(C_{N,j}+1)-a_N\,\max_{z\in\mathds S^1}\big[\mathrm n_-(b_z)\big]\\
\stackrel{\text{by Lemma~\ref{thm:crescitaBN}}}\ge\sum_{j=0}^kd_j(C_{N+P,j}-C_{N,j}-2)-(k+1)\big[2\,\mathrm{dim}(M)-\mathrm n_0(\gamma)\big]
\\ \ge\sum_{j=0}^k d_{j}(C_{N+P,j}-C_{N,j})-2\Big(\sum_{j=0}^kd_j\Big)-(k+1)\big[2\,\mathrm{dim}(M)-\mathrm n_0(\gamma)\big]
\\ \ge\sum_{j=0}^k\frac{d_{j}(\theta_{j+1}-\theta_{j})}{2\pi}\,P-4\Big(\sum_{j=0}^kd_j\Big)-(k+1)\big[2\,\mathrm{dim}(M)-\mathrm n_0(\gamma)\big]
\\ \stackrel{by\ \eqref{eq:Kgammasomma}}\ge K_\gamma\,P-5(k+1)\big[2\,\mathrm{dim}(M)-\mathrm n_0(\gamma)\big].
\end{multline*}
This concludes the proof of the first inequality in \eqref{firstineq}. The second inequality in \eqref{firstineq} is obtained similarly:
\begin{multline*}
\mathcal B_{N+P}-\mathcal B_N\le
\sum_{j=0}^kd_j(C_{N+P,j}+1)-\sum_{j=0}^kd_j(C_{N,j}-1)+a_N\,\max_{z\in\mathds S^1}\big[\mathrm n_-(b_z)\big]\\
\stackrel{\text{by Lemma~\ref{thm:crescitaBN}}}\le\sum_{j=0}^kd_j(C_{N+P,j}-C_{N,j}+2)+(k+1)\big[2\,\mathrm{dim}(M)-\mathrm n_0(\gamma)\big]
\\ \le\sum_{j=0}^k d_{j}(C_{N+P,j}-C_{N,j})+2\Big(\sum_{j=0}^kd_j\Big)+(k+1)\big[2\,\mathrm{dim}(M)-\mathrm n_0(\gamma)\big]
\\ \le\sum_{j=0}^k\frac{d_{j}(\theta_{j+1}-\theta_{j})}{2\pi}\,P+4\Big(\sum_{j=0}^kd_j\Big)+(k+1)\big[2\,\mathrm{dim}(M)-\mathrm n_0(\gamma)\big]
\\ \stackrel{by\ \eqref{eq:Kgammasomma}}\le K_\gamma\,P+5(k+1)\big[2\,\mathrm{dim}(M)-\mathrm n_0(\gamma)\big].\qedhere
\end{multline*}
\end{proof}
From \eqref{eq:iterationspfl}, \eqref{eq;defLgamma}, and \eqref{eq:defKgamma} one obtains immediately:
\[L_\gamma=-K_\gamma-\iMaslov(\gamma);\]
moreover, we can finally prove the uniform superlinear growth of $\spfl\big(\gamma^{(N)}\big)$:
\begin{prop}\label{thm:uniformgrowth}
With the notations of Corollary \ref{thm:iteratesineq} and Proposition \ref{thm:cresictaBN},
the following inequalities hold:
\begin{equation}\label{eq:uniflingrLgamma}
L_\gamma\cdot P-2\dim (M)-\alpha\le\spfl\big(\gamma^{(N+P)}\big)-\spfl\big(\gamma^{(N)}\big)\le L_\gamma\cdot P+2\dim (M)+\alpha.
\end{equation}
\end{prop}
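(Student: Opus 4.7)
The plan is to combine the iteration formula \eqref{eq:iterationspfl} with the two uniform estimates already established in this section. Write $\mathcal D_N=\sum_{k=0}^{N-1}\dim\bigl(\mathbb J_\gamma^{(1)}(e^{2\pi ik/N})\bigr)$, so that \eqref{eq:iterationspfl} reads
\[
\spfl\bigl(\gamma^{(N)}\bigr)=-N\,\iMaslov(\gamma)-\mathrm n_-(g)+\mathcal D_N-\mathcal B_N.
\]
Subtracting the analogous expression for $\gamma^{(N)}$ from that for $\gamma^{(N+P)}$ kills the constant $\mathrm n_-(g)$ and yields
\[
\spfl\bigl(\gamma^{(N+P)}\bigr)-\spfl\bigl(\gamma^{(N)}\bigr)=-P\,\iMaslov(\gamma)+(\mathcal D_{N+P}-\mathcal D_N)-(\mathcal B_{N+P}-\mathcal B_N).
\]

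Next I bound each of the two differences on the right. By Lemma~\ref{thm:unifboundeterm}, both $\mathcal D_{N+P}$ and $\mathcal D_N$ lie in the interval $[0,2\dim(M)]$, hence
\[
-2\dim(M)\le \mathcal D_{N+P}-\mathcal D_N\le 2\dim(M).
\]
By Proposition~\ref{thm:cresictaBN}, there is a constant $\alpha$ such that
\[
K_\gamma\,P-\alpha\le \mathcal B_{N+P}-\mathcal B_N\le K_\gamma\,P+\alpha.
\]

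Plugging these estimates into the displayed identity and using the relation $L_\gamma=-K_\gamma-\iMaslov(\gamma)$ recorded just before the statement, the upper bound follows from
\[
\spfl\bigl(\gamma^{(N+P)}\bigr)-\spfl\bigl(\gamma^{(N)}\bigr)\le -P\,\iMaslov(\gamma)+2\dim(M)-\bigl(K_\gamma\,P-\alpha\bigr)=L_\gamma\,P+2\dim(M)+\alpha,
\]
and the lower bound is obtained symmetrically, replacing $+2\dim(M)$ by $-2\dim(M)$ and $K_\gamma P-\alpha$ by $K_\gamma P+\alpha$. There is no real obstacle here: all the delicate work has been absorbed into Proposition~\ref{thm:cresictaBN}, and the present statement is essentially a bookkeeping corollary of it together with the uniform bound on $\mathcal D_N$. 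The only point worth being careful about is that in \eqref{eq:iterationspfl} the $N$-th roots of unity used for $N$ and for $N+P$ are different, so the differences $\mathcal D_{N+P}-\mathcal D_N$ and $\mathcal B_{N+P}-\mathcal B_N$ cannot be simplified telescopically; this is precisely why one needs the a priori uniform bounds rather than any cancellation, and why the constant $\alpha$ from Proposition~\ref{thm:cresictaBN} appears unchanged in the final inequality.
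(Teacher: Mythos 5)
Your proposal is correct and is precisely the argument the paper has in mind: the paper's own proof is the one-line citation ``Immediate from Propositions~\ref{thm:spflredfindim}, \ref{thm:cresictaBN} and Lemma~\ref{thm:unifboundeterm}'', and you have simply unpacked that citation, using \eqref{eq:iterationspfl} to express the difference $\spfl(\gamma^{(N+P)})-\spfl(\gamma^{(N)})$, bounding the $\mathcal D$-term by $\pm 2\dim(M)$ via Lemma~\ref{thm:unifboundeterm}, and controlling the $\mathcal B$-term by $K_\gamma P\pm\alpha$ via Proposition~\ref{thm:cresictaBN}, finally assembling the constants through $L_\gamma=-K_\gamma-\iMaslov(\gamma)$. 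Your remark that the roots of unity for $N$ and $N+P$ are disjoint, so that no telescoping is available and the a priori uniform estimates are genuinely needed, is a correct and worthwhile observation.
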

\begin{proof}
Immediate from Propositions \ref{thm:spflredfindim}, \ref{thm:cresictaBN} and Lemma \ref{thm:unifboundeterm}.
\end{proof}
\begin{cor}\label{thm:boundedorlinear}
The sequence $\spfl\big(\gamma^{(N)}\big)$ is either bounded or it has uniform linear growth.
\end{cor}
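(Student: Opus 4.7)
The plan is to deduce the dichotomy as an immediate reading of the two--sided uniform estimate established in Proposition~\ref{thm:uniformgrowth}. All of the analytic work is already present in that proposition; the corollary is obtained by specializing the increment to a fixed base point.

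First I would set $N=1$ in Proposition~\ref{thm:uniformgrowth} and let $P$ vary over $\N$. Writing $C:=2\dim(M)+\alpha$, this yields
\[
L_\gamma\,P+\spfl(\gamma)-C\,\le\,\spfl\bigl(\gamma^{(1+P)}\bigr)\,\le\,L_\gamma\,P+\spfl(\gamma)+C,
\]
which, after reindexing by $N=P+1$, is the single uniform bound
\[
\bigl|\,\spfl\bigl(\gamma^{(N)}\bigr)-L_\gamma\,(N-1)-\spfl(\gamma)\,\bigr|\,\le\,C\qquad\text{for every }N\ge 1.
\]

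The conclusion then splits according to whether $L_\gamma$ is zero. If $L_\gamma=0$, the displayed inequality collapses to $|\spfl(\gamma^{(N)})-\spfl(\gamma)|\le C$, so the sequence $N\mapsto\spfl(\gamma^{(N)})$ is bounded. If $L_\gamma\neq 0$, the same inequality says that the quantity $\spfl(\gamma^{(N)})-L_\gamma N$ stays inside a fixed bounded interval as $N$ varies, which is precisely uniform linear growth with slope $L_\gamma$.

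There is essentially no obstacle: the substantive content, namely the promotion of the asymptotic statement $\tfrac1N\spfl(\gamma^{(N)})\to L_\gamma$ to the two--sided uniform bound of Proposition~\ref{thm:uniformgrowth}, has already been carried out through the finite dimensional reduction of Proposition~\ref{thm:spflredfindim}, the uniform bound of Lemma~\ref{thm:unifboundeterm}, and the uniform growth estimate \eqref{firstineq} for $\mathcal B_N$ in Proposition~\ref{thm:cresictaBN}. The only point worth emphasizing in the write--up is that, because the error constant $C$ is independent of both $N$ and $P$, the alternative is genuinely ``bounded versus uniform linear'', ruling out any intermediate behavior such as linear growth that is only asymptotic.
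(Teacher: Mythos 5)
Your argument is correct and follows the paper's approach: the corollary is read off from the uniform two-sided estimate in Proposition~\ref{thm:uniformgrowth}. Your write-up is in fact slightly more self-contained than the paper's, which invokes Proposition~\ref{thm:lineargrowth} (and hence the detour through Lemma~\ref{thm:subsequence}) to establish the equivalence ``bounded $\iff L_\gamma=0$'' before quoting Proposition~\ref{thm:uniformgrowth}, whereas you obtain both halves of the dichotomy directly by specializing the uniform estimate at $N=1$.
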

\begin{proof}
We have seen in the proof of Proposition \ref{thm:lineargrowth} that the sequence $\spfl\big(\gamma^{(N)}\big)$ is bounded if and only if $L_\gamma=0$, so that the thesis follows from Proposition \ref{thm:uniformgrowth}.
\end{proof}
Denote by $\Lambda M$ the free loop space\footnote{i.e., the Hilbert manifold of all curves $\gamma:\mathds S^1\to M$ having
Sobolev regularity $H^1$.} of $M$, and by $\mathfrak f:\Lambda M\to\R$ the geodesic action functional of $(M,g)$,
whose critical points are well known to be closed geodesics. There is an equivariant action of the orthogonal
group $\mathrm O(2)$ on $\Lambda M$, obtained from the natural action of $\mathrm  O(2)$ on the parameter space $\mathds S^1$.
 A critical $\mathrm O(2)$-orbit of $\mathfrak f$ consists of all closed geodesics that are obtained by rotation
 and inversion of a given closed geodesic in $(M,g)$; it is immediate that all the closed geodesics in the same
 critical orbit have equal spectral flow. 
 Using equivariant Morse theory applied to the
geodesic action functional, Gromoll and Meyer have proved that, in the Riemannian case, the contribution to the homology
 of the free loop space $\Lambda M$ in a fixed dimension $k$ is given only by those closed orbits
 whose Morse index is an integer between $k-\Dim(M)$ and $k$. A key point of their multiplicity result is that, assuming the existence of
 only a finite number of distinct closed prime geodesics, one has a uniformly bounded number of
 distinct orbits with a fixed Morse index (\cite[Corollary~2]{GroMey2}).
Aiming at the development of an equivariant Morse theory for strongly indefinite functionals, we prove an extension of their result, replacing 
the Morse index with the spectral flow.
\begin{prop}\label{thm:geomdistfinnum}
Let $(M,g)$ be a semi-Riemannian manifold that has only a finite number of distinct prime closed geodesics.
Then, for $k\in\Z$ with $\vert k\vert$ sufficiently large, the total number of critical orbits of the geodesic
action functional $\mathfrak f$ in the free loop space $\Lambda M$ having spectral flow equal to $k$ is  bounded uniformly in $k$.
\end{prop}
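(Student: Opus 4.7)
The plan is to enumerate critical orbits in $\Lambda M$ as iterates of the finitely many prime ones, and then control the preimage of each integer $k$ under the spectral flow map using the dichotomy and the \emph{uniform} linear growth established in Corollary~\ref{thm:boundedorlinear} and Proposition~\ref{thm:uniformgrowth}. The nontrivial input has already been done in these results; what remains is a counting argument.

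I would first fix representatives $\gamma_1,\ldots,\gamma_r$ of the finitely many prime critical orbits of $\mathfrak{f}$. Since every closed geodesic is an iterate of a prime one, every critical orbit of $\mathfrak f$ is of the form $[\gamma_i^{(N)}]$ for some $i\in\{1,\ldots,r\}$ and $N\ge1$. Primality of $\gamma_i$ makes the pair $(i,N)$ uniquely recoverable from the orbit (the minimal period in the orbit gives $N$, which then identifies the underlying prime orbit). Using the reparametrization invariance of the spectral flow, the number of critical orbits with spectral flow $k$ equals
\[\sum_{i=1}^{r}\#\bigl\{N\ge1:\spfl(\gamma_i^{(N)})=k\bigr\},\]
so it suffices to bound each summand uniformly in $k$ for $|k|$ large.

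Next I would invoke Corollary~\ref{thm:boundedorlinear} for each $\gamma_i$ separately: either $\spfl(\gamma_i^{(N)})$ is bounded in $N$, in which case only finitely many values of $k$ are ever attained by iterates of $\gamma_i$, or $L_{\gamma_i}\ne0$, in which case Proposition~\ref{thm:uniformgrowth} yields a constant $C_i$ such that
\[\bigl|\spfl(\gamma_i^{(N+P)})-\spfl(\gamma_i^{(N)})-L_{\gamma_i}P\bigr|\le C_i\quad\text{for all }N,P\ge1.\]
In the second case, if $N_1<N_2$ both satisfy $\spfl(\gamma_i^{(\cdot)})=k$, then $|L_{\gamma_i}|(N_2-N_1)\le C_i$, so the set of such $N$ lies in an interval of length $C_i/|L_{\gamma_i}|$ and contains at most $C_i/|L_{\gamma_i}|+1$ integers, independently of $k$.

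Finally, let $k_0$ denote the maximum of $|\spfl(\gamma_i^{(N)})|$ taken over those $i$ of bounded type and all $N$; this is a finite number by hypothesis. For $|k|>k_0$, the bounded primes contribute nothing, and summing the uniform bounds $C_i/|L_{\gamma_i}|+1$ over the (finitely many) remaining primes of linear type produces a bound depending only on the $\gamma_i$. The step I regard as essential is the uniformity of the linear growth in Proposition~\ref{thm:uniformgrowth}: the weaker assertion of Proposition~\ref{thm:lineargrowth} alone would not preclude arbitrarily long repetitions of a value of the spectral flow along a single prime, so without the explicit two-sided estimate the counting collapses. Since that estimate is already available, the argument is complete.
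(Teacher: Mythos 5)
Your proposal is correct and follows essentially the same route as the paper: both reduce to the finitely many primes, discard the bounded-type primes for $|k|$ large (the paper uses the explicit uniform threshold $|k|>2\dim(M)+\mathrm n_-(g)$ from Lemma~\ref{thm:subsequence}, you take a max over the bounded primes, which amounts to the same thing), and then use the two-sided estimate of Proposition~\ref{thm:uniformgrowth} to bound the number of $N$ with $\spfl(\gamma_i^{(N)})=k$ by a constant $\alpha_i/|L_{\gamma_i}|+1$ independent of $k$. Your emphasis that the uniformity in Proposition~\ref{thm:uniformgrowth}, and not merely the dichotomy of Proposition~\ref{thm:lineargrowth}, is the essential input is exactly the point of the paper's argument.
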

\begin{proof}
Let $\gamma_1$,\ldots,$\gamma_r$ be the family of all distinct prime closed geodesics in $M$
and fix some integer $k$ with $\vert k\vert>2\,\Dim(M)+\mathrm n_-(g)$ (recall Lemma~\ref{thm:subsequence}).
We can remove from the family those geodesics whose spectral flow is not unbounded by iteration, and assume that
all these geodesics have iterates with unbounded spectral flow; in particular, $L_{\gamma_i}\ne0$ for all $i$.
For $i=1,\ldots,r$, let $N_i\ge 1$ be the first integer such that $\spfl\big(\gamma_i^{(N_i)}\big)=k$; if no such integer
$N_i$ exists, we can remove also  $\gamma_i$ from the family. From \eqref{eq:uniflingrLgamma} we obtain easily
that $\vert\spfl\big(\gamma^{(N_i+P)}\big)-k\vert>1$ when 
\[P>\frac{1+\alpha+2\,\Dim(M)}{\vert L_{\gamma_i}\vert}.\]
Thus, there are at most:
\[r+\sum_{i=1}^r\left\lfloor\frac{1+\alpha+2\,\Dim(M)}{\vert L_{\gamma_i}\vert}\right\rfloor\]
critical orbits of $\mathfrak f$ with spectral flow equal to $k$.
\end{proof}

\smallskip

Finally, let us recall that $\gamma$ is said to be \emph{hyperbolic} if the linearized Poincar\'e map $\mathfrak P_\gamma$ does not
have eigenvalues on the unit circle. For hyperbolic geodesics, the iteration formula for the spectral flow has a simple expression:
\begin{prop}\label{thm:gammahyper}
If $\gamma$ is hyperbolic, then:
\[\spfl\big(\gamma^{(N)}\big)=N\spfl(\gamma)+(N-1)\,\mathrm n_-(g).\]
\end{prop}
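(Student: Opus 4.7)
The proof is a direct combination of the three main tools developed in the paper: the Fourier theorem, the local constancy of $\lambda_\gamma$ away from the spectrum of $\mathfrak P_\gamma$, and the explicit formula for the jump of $\lambda_\gamma$ at $z=1$.

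The plan is as follows. Since $\gamma$ is hyperbolic, the spectrum of the linearized Poincaré map $\mathfrak P_\gamma$ has empty intersection with $\mathds S^1$. Consequently, the only possible discontinuity point of the spectral flow function $\lambda_\gamma$ on $\mathds S^1$ is $z=1$ itself: by Proposition~\ref{thm:lambdagammaconst}, $\lambda_\gamma$ is locally constant (hence constant) on the connected set $\mathds S^1\setminus\{1\}$. Then I would apply Corollary~\ref{thm:saltoinz=1} with $\mathcal A=\mathds S^1$, which tells us that this constant value equals $\lambda_\gamma(1)+\mathrm n_-(g)=\spfl(\gamma)+\mathrm n_-(g)$.

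Next, I invoke the Fourier theorem (Theorem~\ref{thm:fourier}):
\[
\spfl\big(\gamma^{(N)}\big)=\sum_{k=0}^{N-1}\lambda_\gamma(\omega^k),\qquad\omega=e^{2\pi i/N}.
\]
For $N\ge 2$, the $N$-th roots of unity $\omega^k$ with $k=1,\ldots,N-1$ all lie in $\mathds S^1\setminus\{1\}$, so $\lambda_\gamma(\omega^k)=\spfl(\gamma)+\mathrm n_-(g)$ for these indices, while $\lambda_\gamma(\omega^0)=\lambda_\gamma(1)=\spfl(\gamma)$. Summing yields
\[
\spfl\big(\gamma^{(N)}\big)=\spfl(\gamma)+(N-1)\bigl[\spfl(\gamma)+\mathrm n_-(g)\bigr]=N\,\spfl(\gamma)+(N-1)\,\mathrm n_-(g),
\]
and the case $N=1$ is trivially included.

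There is no real obstacle in this argument: once the hyperbolicity hypothesis is translated into ``no spectrum of $\mathfrak P_\gamma$ on $\mathds S^1$'', the function $\lambda_\gamma$ takes only two values (the one at $z=1$ and the one on its complement, differing exactly by $\mathrm n_-(g)$), and the Fourier theorem converts this into the stated closed-form iteration formula. All the technical content sits in the previously established Propositions~\ref{thm:lambdagammaconst}, Corollary~\ref{thm:saltoinz=1}, and Theorem~\ref{thm:fourier}.
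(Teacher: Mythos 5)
Your proof is correct and is exactly the argument the paper intends (the paper's own proof is just the one-line remark that the result ``follows easily from Corollary~\ref{thm:saltoinz=1} and Theorem~\ref{thm:fourier}''). You spell it out cleanly: hyperbolicity gives that $\lambda_\gamma$ is constant on $\mathds S^1\setminus\{1\}$ with value $\spfl(\gamma)+\mathrm n_-(g)$ by Corollary~\ref{thm:saltoinz=1}, and summing over $N$-th roots of unity via the Fourier theorem yields the formula.
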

\begin{proof}
This follows easily from Corollary~\ref{thm:saltoinz=1} and Theorem~\ref{thm:fourier}.
\end{proof}
\end{section}

\end{document}